\renewcommand{\Re}{\operatorname{Re}}
\renewcommand{\Im}{\operatorname{Im}}
\newcommand{\defeq}{\stackrel{\rm{def}}{=}}
\newcommand{\R}{\mathbb R}
\newcommand{\C}{\mathbb C}
\newcommand{\eps}{\varepsilon}
\newcommand{\ds}{\displaystyle}
\newtheorem{theorem}{Theorem}[section]
\newtheorem{proposition}[theorem]{Proposition}
\newtheorem{lemma}[theorem]{Lemma}
\theoremstyle{remark}
\newtheorem{remark}[theorem]{Remark}
\numberwithin{equation}{section}
\begin{document}

\title[Scattering in NLS and gHartree equations]
{Scattering of radial data in the focusing  NLS\\
and generalized Hartree Equations}

\author[Anudeep K. Arora]{Anudeep Kumar Arora}
\address{Department of Mathematics \& Statistics \\
Florida International University, Miami, FL, USA}
\curraddr{}
\email{simplyandy7@gmail.com} 
\thanks{}

\subjclass[2010]{Primary: 35Q55, 35Q40; secondary: 37K40, 37K05.}

\keywords{scattering, nonlinear Schrodinger equation, generalized Hartree equation, virial, Morawetz identity}

\date{}

\begin{abstract}
We consider the focusing nonlinear Schr\"odinger equation $i u_t + \Delta u  + |u|^{p-1}u=0$, $p>1,$ and the generalized Hartree equation
$iv_t + \Delta v  + (|x|^{-(N-\gamma)}\ast |v|^p)|v|^{p-2}u=0$, $p\geq2$, $\gamma<N$, in the mass-supercritical and energy-subcritical setting.
With the initial data $u_0\in H^1(\R^N)$ the characterization of solutions behavior under the mass-energy threshold is known for the NLS case from the works of Holmer and Roudenko in the radial \cite{HR08} and Duyckaerts, Holmer and Roudenko in the nonradial setting \cite{DHR08} and further generalizations (see \cite{AN13,  FXC11, G}); 
for the generalized Hartree case it is developed in \cite{AKAR}. In particular, scattering is proved following the road map developed by Kenig and Merle \cite{KM06}, using the concentration compactness and rigidity approach, which is now standard in the dispersive problems.

In this work we give an alternative proof of scattering for both NLS and gHartree equations in the radial setting in the inter-critical regime, following the approach of Dodson and Murphy \cite{DM17} for the focusing 3d cubic NLS equation, which relies on the scattering criterion of Tao \cite{Tao04}, combined with the radial Sobolev and Morawetz-type estimates. We first generalize it in the NLS case, and then extend it to the nonlocal Hartree-type potential. This method provides a simplified way to prove scattering, which may be useful in other contexts.
\end{abstract}

\maketitle



\section{Introduction}

Consider two Cauchy problems: the focusing nonlinear Schr\"odinger (NLS) equation
\begin{align}\label{NLS}
\rm(NLS) \qquad
\begin{cases}
	&iu_t + \Delta u  + |u|^{p-1}u=0, \quad p > 1, ~ t \in \R, ~ x \in \R^N \\
	&u(x,0)=\,u_0(x)\in H^1(\R^N)
\end{cases}	
\end{align}
and the focusing Schr\"odinger-Hartree-type equation, which is also refereed to as the {\it generalized Hartree}, and abbreviated gHartree,
\begin{align}\label{gH}
\qquad \rm(gH) \quad
\begin{cases}
	&iv_t + \Delta v  + (|x|^{-(N-\gamma)}\ast |v|^p)|v|^{p-2} v=0, \quad t \in \R, ~ x \in \R^N \\
	&v(x,0)=\,v_0(x)\in H^1(\R^N)
\end{cases}
\end{align}
for $p\geq 2$ and $0<\gamma<N$. We consider $u=u(x,t)$ and $v=v(x,t)$ to be complex-valued functions in the equations \eqref{NLS} and \eqref{gH}, respectively.

Solutions to the equations \eqref{NLS} and \eqref{gH}, during their lifespan, conserve several quantities, including the mass, given (respectively, for NLS and gH) by
	\begin{align*}
		M_{NLS}[u(t)]\defeq\int_{\R^N}^{}|u(x,t)|^2\,dx=M_{NLS}[u_0],
	\end{align*}
	and
	\begin{align*}
	M_{gH}[v(t)]\defeq\int_{\R^N}^{}|v(x,t)|^2\,dx=M_{gH}[v_0].
	\end{align*}
The energy is also conserved, which is defined for \eqref{NLS} and \eqref{gH}, respectively, by
	\begin{align*}
		E_{NLS}[u(t)]\defeq\frac{1}{2}\int_{\R^N}^{}|\nabla u(x,t)|^2\,dx - \frac{1}{p+1}\int_{\R^N}^{}|u(x,t)|^{p+1}\,dx=E_{NLS}[u_0]
	\end{align*}
	and
	\begin{align*}
	E_{gH}[v(t)]\defeq\frac{1}{2}\int_{\R^N}^{}|\nabla v(x,t)|^2\,dx - \frac{1}{2p} \int_{\R^N}(|x|^{-(N-\gamma)}\ast|v(\,\cdot\,,t)|^p)|v(x,t)|^{p}\,dx=E_{gH}[v_0].
	\end{align*}

We omit the conservation of momentum, since we only consider radial solutions. The equations \eqref{NLS} and \eqref{gH} also enjoy several invariances, among them is scaling: if $u(x,t)$ solves \eqref{NLS}, then $\displaystyle{u^{\lambda}(x,t) = \lambda^{\frac{2}{p-1}}u(\lambda x,\lambda^2 t)} $ is also a solution to \eqref{NLS}, and similarly, if $v(x,t)$ solves \eqref{gH}, then $\displaystyle{v^{\lambda}(x,t) = \lambda^{\frac{\gamma+2}{2(p-1)}}v(\lambda x,\lambda^2 t)} $ is also a solution to \eqref{gH}. The $\dot{H}^s$ norm is invariant under the scaling for both equations. The scale-invariant Sobolev norm is $\dot{H}^s$ with
\begin{equation}\label{E:scaling}
\displaystyle{s=\frac{N}{2}-\frac{2}{p-1}}\quad \mbox{for} ~ \eqref{NLS} \quad \mbox{and} \qquad {s=\frac{N}{2}-\frac{\gamma+2}{2(p-1)}} \quad \mbox{for} ~\eqref{gH}.
\end{equation}
In this paper we will consider the equations \eqref{NLS} and \eqref{gH} with the nonlinearity power $p$ such that the equations are energy-subcritical, $s < 1$.

Using Duhamel's formula, we can write \eqref{NLS} in the integral form
	\begin{align}\label{duhamel_NLS}
	u(t) = e^{it\Delta}u_0 + i \int_0^t e^{i(t-s)\Delta} |u|^{p-1}u(s)\,ds,
	\end{align}
	and the corresponding Duhamel formulation for \eqref{gH} is given by
	\begin{align}\label{duhamel_gH}
	v(t) = e^{it\Delta}v_0 + i \int_0^t e^{i(t-s)\Delta} (|x|^{-(N-\gamma)}\ast |v|^p)|v|^{p-2}v(s)\,ds.
	\end{align}

The local well-posedness of solutions to \eqref{NLS} or \eqref{gH} is obtained via a fixed point theorem, or a contraction on the map defined from the Duhamel's formulas \eqref{duhamel_NLS} and \eqref{duhamel_gH}, correspondingly. For the purpose of this paper, we only need the local well-posedness in $H^1$, which is long known for the standard NLS and the standard $(p=2)$ Hartree equations from the works of Cazenave \cite{C03} and Ginibre \& Velo \cite{GV80}. In the general case of the gHartree $(p> 2)$ equation, the local well-posedness is given in our work \cite{AKAR}.

Denote the maximal existence (in time) interval of solutions to \eqref{NLS} and \eqref{gH} by $(T_*, T^*)$. We say a solution is global in forward time if $T^* = +\infty$; similarly, if $T_* = -\infty$, the solution is global in backward time. If both $T_*$ and $T^*$ are infinite, then the solution is global.
	
We say that a solution $u(t)$ to \eqref{NLS} scatters in $H^s(\R^N)$, $s\geq 0$, as $t\rightarrow +\infty$, or correspondingly, $v(t)$ to \eqref{gH} scatters in $H^s$, if there exists  $u^+ \in H^s(\R^N)$, or $v^+\in H^s(\R^N)$, such that
$$
\lim\limits_{t\rightarrow +\infty}\|u(t)-e^{it\Delta}u^+\|_{H^s(\R^N)}=0.
$$
In this paper we investigate scattering in $H^1$.

We consider both equations in the inter-critical regime such that $0< s < 1$ with $s$ defined in \eqref{E:scaling}, and provided $p\geq 2$ for the gHartree equation. In this case, both  equations \eqref{NLS} and \eqref{gH} admit solutions of the form $\displaystyle e^{it}Q(x)$, which are global but non-scattering, where $Q$ solves in the NLS case the following nonlinear elliptic equation 
\begin{align}\label{NLSQeq}
-Q + \Delta Q + |Q|^{p-1} Q = 0,
\end{align}
and in the gHartree case $Q$ solves the Choquard equation
\begin{align}\label{gHQeq}
-Q + \Delta Q + \left(|x|^{-(N-\gamma)}\ast |Q|^p \right)|Q|^{p-2} Q = 0.
\end{align}
The equation \eqref{NLSQeq} has countably many $H^1$ (real) solutions. Among those, there is exactly one solution of minimal mass, called the ground state, which is positive, radial, and exponentially decaying (e.g., see Berestycki \& Lions \cite{BL83I, BL83II},  Kwong \cite{K89}; for a review, for example, see Tao \cite[Appendix B]{Tao06}).
	
For the equation \eqref{gHQeq} the existence and uniqueness of the positive solution in the standard 3d Choquard equation with $p=2$ and $\gamma=2$ was first proved by Lieb \cite{Lieb77} (see also discussions on existence of solutions by P. L. Lions \cite{Lions80} and \cite{Lions82}). This solution is smooth, radial, monotonically decreasing (in the radial coordinate) and has exponential decay to infinity; similar to the NLS case, it also minimizes the mass among all $H^1$ real solutions of \eqref{gHQeq}. The existence and uniqueness for the 4d Choquard equation (also with $p=2$ and $\gamma = 2$) was shown by Krieger, Lenzmann \& Rapha\"el \cite{KLR09}, generalizing Lieb's approach for the uniqueness (the existence follows, for example, from the standard variational arguments). The generalization of the uniqueness proof for $p=2$, $\gamma=2$ and other dimensions is given in the appendix section of our work \cite{AKAR}. When $\gamma=2$, $N=3$ and $p=2+\varepsilon$, the uniqueness is proved recently by Xiang in \cite{X16}.
The existence of positive solutions along with the regularity, and radial symmetry of solutions to \eqref{gHQeq} for $\frac{N+\gamma}{N}<p<\frac{N+\gamma}{N-2}$ with $0<\gamma<N$ was shown by Moroz \& Schaftingen \cite{MS13} (see also a good review by Moroz \& Schaftingen \cite{MS17}). The uniqueness in a general case ($\gamma \neq 2$ and $2 \leq p < (N+\gamma)/(N-2)$ or $s<1$) is a more delicate issue, and, in general, is not known. We note that the proof of uniqueness of a positive solution in Hartree case, Lieb \cite{Lieb77}, is quite different from the corresponding result for the NLS (for example, as given by Kwong \cite{K89}), and relies on the Newton's potential representation of the nonlinearity. In the cases when uniqueness is known, we denote this unique positive solution, or the ground state, by $Q$ for both \eqref{NLSQeq} and \eqref{gHQeq}. In the gHartree cases when the uniqueness is not available, it is sufficient to use the minimizer of Gagliardo-Nirenberg inequality and its value expressed via $\|Q\|_{L^2}$, see \cite{AKAR}.

To characterize the global behavior of solutions in the NLS equation, Holmer and Roudenko in \cite{HR07} observed that the quantities $M[u]^{1-s}E[u]^s$ and $\|u\|_{L^2}^{1-s}\|\nabla u\|_{L^2}^s$ are scale-invariant and scale as the $\dot{H}^s$ norm. In \cite{HR08} they proved the following result, using the concentration - compactness and rigidity road map of Kenig and Merle \cite{KM06}, in the case of the focusing $3d$ cubic NLS equation (in the radial setting)
	\begin{align}\label{3dcubic}
		\begin{cases}
		&iu_t+\Delta u +|u|^2u=0;\quad t\in \R,\,\,\,x\in\R^3\\
		&u(x,0)=\,u_0(x)\in H^1(\R^3).
		\end{cases}
	\end{align}
	\begin{theorem}\label{HR}
		Let $u_0\in H^1(\mathbb R^3)$ be radial and $u(t)$ be the corresponding solution to \eqref{3dcubic} in $H^1(\mathbb{R}^3)$. Suppose $M[u_0]E[u_0]<M[Q]E[Q]$. If $\| u_0 \|_{L^2} \| \nabla u_0 \|_{L^2} < \|Q\|_{L^2} \| \nabla Q \|_{L^2}$, then the solution to \eqref{3dcubic} is global and scatters in $H^1$.
\end{theorem}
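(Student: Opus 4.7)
The plan is to follow the Dodson--Murphy strategy adapted to the 3d cubic NLS in the radial setting, which is a simpler alternative to the full Kenig--Merle concentration-compactness/rigidity scheme used in \cite{HR08}. The three ingredients are: (i) variational/energy trapping from the subthreshold hypothesis, (ii) Tao's scattering criterion, and (iii) a Morawetz-type spacetime estimate combined with the radial Sobolev embedding.

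First I would establish the coercivity/trapping step. Using the sharp Gagliardo--Nirenberg inequality with $Q$ as extremizer, together with the scaling identity $M[u]E[u]$ and the conservation of mass and energy, one shows that the hypothesis $\|u_0\|_{L^2}\|\nabla u_0\|_{L^2}<\|Q\|_{L^2}\|\nabla Q\|_{L^2}$ is preserved for all times in the maximal interval, so $\|\nabla u(t)\|_{L^2}$ is uniformly bounded below the $Q$-threshold. This immediately gives global existence, a uniform $H^1$ bound, and, crucially, coercivity of the form $\|\nabla u(t)\|_{L^2}^2-C\|u(t)\|_{L^4}^4\gtrsim \|\nabla u(t)\|_{L^2}^2$ with a strictly positive implicit constant determined by the mass-energy gap.

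Next, I would invoke Tao's scattering criterion for the radial cubic NLS: any global $H^1$ radial solution with uniformly bounded $H^1$ norm scatters forward provided one can find, for every $\eta>0$, a time $T$ (arbitrarily large) and a radius $R$ so that the truncated Duhamel term from $T$ to $\infty$ has small Strichartz norm, which in turn is controlled by showing that $\int_{|x|\leq R}|u(x,T)|^{4}\,dx$ (or a short time average of it) is small. The task therefore reduces to producing such a good time $T$. This is done via a Morawetz/virial estimate: applying the standard Morawetz machinery with a radial weight $a(x)$ that is a smooth approximation of $|x|$ truncated at scale $R$, and using the coercivity from Step 1 to get the good sign for the bulk term, one obtains a localized interaction/Morawetz estimate of the form
\begin{equation*}
\int_0^{\infty}\int_{|x|\leq R}\frac{|u(x,t)|^{4}}{|x|}\,dx\,dt\lesssim R\,\sup_{t\geq 0}\|u(t)\|_{H^1}^2 ,
\end{equation*}
uniformly in $R$ large. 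Since the right-hand side is linear in $R$ but the left-hand side integrates over infinite time, a pigeonholing argument produces a sequence of times $T_n\to\infty$ on which the local $L^4$ mass of $u(T_n)$ on $|x|\leq R$ is as small as we wish.

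Finally I would combine this with the radial Sobolev embedding $\|u\|_{L^\infty(|x|\geq R)}\lesssim R^{-1}\|u\|_{H^1}$, valid for radial $H^1$ functions in $\R^3$, to control the exterior contribution of $\|u(T_n)\|_{L^4}$ on $|x|>R$ by $R^{-1/2}$ times an $H^1$-controlled factor. Choosing $R$ large and then $T_n$ appropriately gives smallness of the relevant $L^4$-norm of $u(T_n)$, which by standard Strichartz/Duhamel estimates yields smallness of the linear evolution's scattering norm from time $T_n$, verifying Tao's criterion and hence scattering in $H^1$. The main obstacle is the Morawetz step: one must choose the weight $a(x)$ carefully so that the bulk nonlinear term inherits the coercivity from the subthreshold trapping (so it has the correct sign), and so that the error terms arising from the truncation and from derivatives of $a$ falling on $u$ are absorbable by the uniform $H^1$ bound; this is where the radial hypothesis and the precise variational gap from Step 1 are essential.
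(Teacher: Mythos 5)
Your proposal follows the same Dodson--Murphy road map that the paper uses to prove Theorem~\ref{main1}, of which Theorem~\ref{HR} is the special case $N=3$, $p=3$: subthreshold coercivity and $H^1$-trapping via the sharp Gagliardo--Nirenberg inequality (Lemmas~\ref{Coercivity I}, \ref{coercivityball}), Tao's radial scattering criterion (Lemma~\ref{Scat_crit}), and a truncated virial/Morawetz estimate combined with the radial Sobolev inequality to produce energy evacuation (Propositions~\ref{morawetz_est}, \ref{energyevac}). The one imprecision is in your stated Morawetz inequality: with the Dodson--Murphy weight $a\approx|x|^2$ on $|x|\leq R/2$, the interior bulk term is $\int_{|x|\leq R/2}|u|^4\,dx$ with no $1/|x|$ factor, and the cutoff introduces an error of order $T/R^2$ (cf.\ Proposition~\ref{morawetz_est}, which reads $\frac{1}{T}\int_0^T\int_{|x|\leq R}|u|^4\lesssim\frac{R}{T}+\frac{1}{R^2}$), so the spacetime integral is not bounded uniformly in $T$ at fixed $R$; the pigeonhole argument therefore requires coupling $R\sim T^{1/3}$ as $T\to\infty$, which your scheme of taking both $R$ and $T_n$ large implicitly does.
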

\begin{remark}
Their result also contains the finite time blow-up conclusion in the case when $\|u_0\|_{L^2} \|\nabla u_0\|_{L^2}> \|Q\|_{L^2} \|\nabla Q \|_{L^2}$, however, we omit that part as it is not needed for this paper.
\end{remark}
\begin{remark}
Later Duyckaerts, Holmer \& Roudenko in \cite{DHR08} extended their result to the general, non-radial setting; and since we consider only the radial case, we omit the general case as well. 
\end{remark}

Recently, Dodson \& Murphy in \cite{DM17} presented a simplified proof of Theorem \ref{HR} that avoids concentration-compactness route. They used a scattering criterion introduced by Tao in \cite{Tao04}, which together with the radial Sobolev embedding and virial/Morawetz estimate was sufficient to prove (in the radial setting) Theorem \ref{HR}. Even more recently, Dodson \& Murphy extended their method to the non-radial case in \cite{DM18}, also avoiding the concentration-compactness.
	
The purpose of this work is to generalize the method of Dodson \& Murphy \cite{DM17} in the radial case to the inter-critical range of the nonlinear Schr\"odinger equation \eqref{NLS} and also show that it can be applied in the case of the nonlocal potential such as in the generalized Hartree equation \eqref{gH}. Our result is a new (or an alternative) proof of the following two theorems.

\begin{theorem}[Scattering in NLS]\label{main1}
Consider the NLS equation \eqref{NLS} with 
$N > 2$, and $1+\frac{4}{N}< p<1+\frac{4}{N-2}$ $(0< s<1)$. Let $u_0\in H^1 (\mathbb{R}^N)$ be radial and 
assume 
$$
M[u_0]^{1-s}E[u_0]^s < M[Q]^{1-s} E[Q]^s.
$$
If		
$$
\|u_0\|^{1-s}_{L^2(\mathbb{R}^N)}\|\nabla u_0\|^s_{L^2(\mathbb{R}^N)}< \|Q\|^{1-s}_{L^2(\mathbb{R}^N)} \|\nabla Q \|^s_{L^2(\mathbb{R}^N)},
$$
then the solution $u(t)$ to \eqref{NLS} is global and scatters in $H^1(\mathbb{R}^N)$.
\end{theorem}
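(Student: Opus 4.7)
The plan is to follow the Dodson--Murphy strategy \cite{DM17}, built around three ingredients: a variational/coercivity step yielding global existence and energy trapping; a truncated Morawetz-type estimate which, combined with the radial Sobolev embedding, forces the solution to be small in $L^{p+1}$ along a sequence of times going to infinity; and Tao's scattering criterion, which converts this smallness into $H^1$ scattering via Strichartz analysis on the Duhamel representation.

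First I would apply the sharp Gagliardo--Nirenberg inequality (with optimizer $Q$) together with conservation of mass and energy to show that the hypotheses propagate: $\|u_0\|_{L^2}^{1-s}\|\nabla u(t)\|_{L^2}^s < \|Q\|_{L^2}^{1-s}\|\nabla Q\|_{L^2}^s$ throughout the maximal existence interval, yielding global existence with a uniform $H^1$ bound. A key consequence of the strict subthreshold inequality is a virial-type coercivity: there exists $\delta>0$, depending only on the gap in the hypothesis, such that
\begin{equation*}
8\|\nabla u(t)\|_{L^2}^2 - \tfrac{4N(p-1)}{p+1}\|u(t)\|_{L^{p+1}}^{p+1} \geq \delta\,\|\nabla u(t)\|_{L^2}^2 \quad\text{for all } t\in\R.
\end{equation*}
Next I would implement the truncated virial argument. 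Taking a smooth radial weight $a$ with $a(r)=r^2/2$ for $r\leq R$, growing linearly for $r\gg R$, and satisfying $\|\nabla^k a\|_{L^\infty}\lesssim R^{2-k}$, I compute $V_R''(t)$ where $V_R(t)=\int a(|x|)\,|u(x,t)|^2\,dx$. The leading term matches the left-hand side of the coercivity inequality, while the error terms are supported in $\{|x|>R\}$. Using the radial Sobolev embedding $\||x|^{(N-1)/2}u\|_{L^\infty(\R^N)}\lesssim \|u\|_{H^1(\R^N)}$, I bound the tail contribution by $R^{-(N-1)(p-1)/2}\|u\|_{H^1}^{p+1}$, which for $R$ large enough is absorbed into the coercive term. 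Combining with $|V_R'(t)|\lesssim R\,\|u_0\|_{L^2}\|\nabla u\|_{L^2}$ and integrating over $[0,T]$, averaging yields $\frac{1}{T}\int_0^T \|u(t)\|_{L^{p+1}}^{p+1}\,dt\lesssim R/T$, so by a mean-value argument there exist times $t_n\to\infty$ along which $\|u(t_n)\|_{L^{p+1}}$ is arbitrarily small. Feeding this together with the uniform $H^1$ bound into the Duhamel representation $e^{i(t-t_n)\Delta}u(t_n)=e^{it\Delta}u_0+i\int_0^{t_n}e^{i(t-s)\Delta}|u|^{p-1}u(s)\,ds$ verifies the hypothesis of Tao's criterion on $[t_n,\infty)$, and hence closes the argument.

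The main obstacle will be the absorption of the Morawetz error terms throughout the full inter-critical range $1+4/N<p<1+4/(N-2)$. Near the $\dot H^1$-critical endpoint the $R^{-(N-1)(p-1)/2}$ decay from the radial Sobolev embedding only barely dominates the tails, so careful bookkeeping of the powers of $\|\nabla u\|_{L^2}$ versus $\|u\|_{L^2}$ is required, and the constant in the coercivity estimate must be traced to ensure $\delta$-absorption. A second delicate point lies in verifying Tao's criterion: one must convert the pointwise-in-time smallness of $\|u(t_n)\|_{L^{p+1}}$ into smallness of $e^{i(t-t_n)\Delta}u(t_n)$ in an appropriate Strichartz norm on $[t_n,\infty)$, where the scale-invariance condition $s<1$ is used in an essential way to interpolate between the conserved $L^2$-mass and the critical $\dot H^s$-control without derivative loss. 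Once these two points are handled, the standard Strichartz bootstrap yields scattering in $H^1(\R^N)$.
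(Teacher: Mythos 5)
Your overall road map — coercivity from the subthreshold assumption, truncated virial/Morawetz with radial Sobolev, then Tao's scattering criterion — is exactly the Dodson--Murphy strategy that the paper follows. However, there is a concrete gap in the Morawetz step that would break the argument as you have sketched it.

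You assert that in $V_R''(t)$ the leading term "matches the left-hand side of the coercivity inequality," i.e.\ the \emph{whole-space} quantity $8\|\nabla u\|_{L^2}^2-\frac{4N(p-1)}{p+1}\|u\|_{L^{p+1}}^{p+1}$, with the errors supported in $\{|x|>R\}$ and bounded via radial Sobolev. This is not what the truncated virial produces. With the weight $a(x)=|x|^2$ on $|x|\le R/2$ and linear at infinity, the favorable part of $V_R''$ is the \emph{localized} expression
$$
8\int_{|x|\le R/2}|\nabla u|^2\,dx - \frac{4N(p-1)}{p+1}\int_{|x|\le R/2}|u|^{p+1}\,dx,
$$
and rewriting this as the whole-space quantity minus exterior contributions introduces the term $-8\int_{|x|>R/2}|\nabla u|^2\,dx$, a negative error of uncontrolled size. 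The radial Sobolev embedding controls $u$ pointwise away from the origin, so it tames the $|u|^{p+1}$ tail, but it gives nothing on $\int_{|x|>R/2}|\nabla u|^2$, which a priori can be comparable to the full $\|\nabla u\|_{L^2}^2$. So the global coercivity you state cannot be fed into the virial identity. What is actually needed is the coercivity for the truncated function $\chi_R u$ (Lemma~\ref{coercivityball} in the paper): one checks $\|\nabla(\chi_R u)\|_{L^2}^2\le \|\nabla u\|_{L^2}^2 + O(R^{-2}M[u])$, so for $R$ large the subthreshold condition persists for $\chi_R u$, yielding
$$
\|\nabla(\chi_R u)\|_{L^2}^2 - \tfrac{N(p-1)}{2(p+1)}\|\chi_R u\|_{L^{p+1}}^{p+1}\ge \delta_1\|\chi_R u\|_{L^{p+1}}^{p+1}
$$
uniformly in $t$, and this localized inequality is what matches the bulk of $V_R''$ (via $\int\chi_R^2|\nabla u|^2 = \int|\nabla(\chi_R u)|^2 + \int\chi_R\Delta(\chi_R)|u|^2$). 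Without this localization step your virial argument does not close; it is a genuine missing lemma, not a bookkeeping detail.

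Two secondary points. First, the averaged Morawetz bound carries a tail term: it is $\frac1T\int_0^T\int_{|x|\le R}|u|^{p+1}\,dx\,dt\lesssim \frac{R}{T}+R^{-\alpha}$ (with $\alpha=\min\{(N-1)(p-1)/2,\,2\}$), not just $R/T$; you then have to take $R=R(T)\to\infty$ and extract joint sequences $t_n,R_n\to\infty$ as in Propositions~\ref{morawetz_est}--\ref{energyevac}, rather than getting smallness of the global $L^{p+1}$ norm at fixed $R$. Second, Tao's scattering criterion in \cite{Tao04} is stated for the $3d$ cubic equation; extending it to the full inter-critical range requires nontrivial Strichartz choices (symmetric pairs only reach $s<1/2$; for $s\ge1/2$ one switches to the $(p+1)$-based pairs of Appendix~\ref{OM}). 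This does not change the architecture, but it is a real piece of work that the sketch silently assumes.
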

	
\begin{theorem}[Scattering in gH]\label{main2}
Consider the gHartree equation \eqref{gH} with
$N > 2$, $p\geq 2$ and $ 1+ \frac{\gamma+2}{N}< p<1+\frac{\gamma+2}{N-2}$ $(0< s<1)$. 
Let $v_0\in H^1 (\mathbb{R}^N)$ be radial and
assume
$$
M[v_0]^{1-s}E[v_0]^s < M[Q]^{1-s} E[Q]^s.
$$
If
$$
\|v_0 \|^{1-s}_{L^2(\mathbb{R}^N)} \| \nabla v_0 \|^s_{L^2(\mathbb{R}^N)} < \| Q \|^{1-s}_{L^2(\mathbb{R}^N)} \| \nabla Q \|^s_{L^2(\mathbb{R}^N)},
$$
then the solution $v(t)$ to \eqref{gH}  is global and scatters in $H^1(\mathbb{R}^N)$.
\end{theorem}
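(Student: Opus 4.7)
The plan is to adapt the Dodson--Murphy scheme \cite{DM17} from the local NLS to the nonlocal Hartree setting, following the outline already given in the introduction. The argument has three steps: (i) coercivity and global $H^1$ bounds extracted from the sub-threshold hypotheses; (ii) a radial Morawetz--virial estimate adapted to the convolution nonlinearity, yielding averaged spacetime control of the nonlinear term on large balls; and (iii) verification of Tao's scattering criterion \cite{Tao04} using the interior control from (ii) together with the radial Sobolev embedding in the exterior.

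For step (i), the sharp Gagliardo--Nirenberg inequality associated with \eqref{gHQeq}, whose extremizer is the ground state $Q$ (or the minimizer constructed in \cite{AKAR} when uniqueness of the positive solution is not known), converts the sub-threshold hypothesis on $M[v_0]^{1-s}E[v_0]^s$ together with the sub-threshold bound on $\|v_0\|_{L^2}^{1-s}\|\nabla v_0\|_{L^2}^s$, via a continuity/bootstrap argument using mass and energy conservation, into a trapped flow
$$
\|v(t)\|_{L^2}^{1-s}\|\nabla v(t)\|_{L^2}^s \leq (1-\eta)\,\|Q\|_{L^2}^{1-s}\|\nabla Q\|_{L^2}^s
$$
for some $\eta>0$ uniform in $t$. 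This yields global existence with $\sup_t\|v(t)\|_{H^1}\leq C$, and simultaneously a uniform coercive lower bound on the virial quantity $\|\nabla v(t)\|_{L^2}^2 - c\int(|x|^{-(N-\gamma)}\ast|v|^p)|v|^p\,dx$, which is the key input for step (ii). For step (ii), applying the virial identity with weight $a(x)$ equal to $|x|^2/2$ on $\{|x|\leq R\}$ and smoothly truncated to linear then bounded growth outside (as in \cite{DM17}), the second time derivative of $\int a(|x|)|v|^2\,dx$ produces the standard kinetic term plus a nonlocal commutator arising from the Hartree convolution; after symmetrization in the double $(x,y)$ integration and use of the positivity of the kernel $|x|^{-(N-\gamma)}$, this commutator carries a favorable sign on the interior and admits controllable errors on the exterior annulus where $a$ departs from $|x|^2/2$. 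Combined with the coercivity from (i), integrating the identity over $[0,T]$ yields
$$
\int_0^T \int_{|x|\leq R} (|x|^{-(N-\gamma)}\ast|v(t)|^p)|v(t)|^p \, dx \, dt \lesssim R,
$$
so that a judicious choice of $R$ as a power of $T$ produces $L^1_t$-averaged smallness of the nonlinearity on arbitrarily long windows.

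For step (iii), Tao's scattering criterion reduces the problem to showing that for each $\epsilon>0$ there exists a large time $T$ at which the Duhamel tail of $v$ on $[T,\infty)$ is smaller than $\epsilon$ in an appropriate Strichartz norm. I would split the nonlinearity at radius $R$: the exterior piece is controlled by the radial Sobolev embedding $\|v(t)\|_{L^\infty(|x|\geq R)}\lesssim R^{-(N-1)/2}\|v(t)\|_{H^1}$ together with Hardy--Littlewood--Sobolev applied to the convolution factor, gaining a small power of $R^{-1}$; the interior piece is handled by using pigeonhole on the Morawetz bound of (ii) to select a time $T$ in a controlled window where $(|x|^{-(N-\gamma)}\ast|v|^p)|v|^p$ has small integral over a short subinterval. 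I expect the main obstacle to be the nonlocal coupling introduced by the Hartree kernel: unlike the pointwise NLS nonlinearity, the convolution $|x|^{-(N-\gamma)}\ast|v|^p$ mixes near- and far-field contributions of $v$, so the interior/exterior splitting requires an auxiliary near/far decomposition of the kernel itself before the radial Sobolev decay can be harnessed, with careful HLS bookkeeping in each region to preserve the Strichartz exponents. Once this decoupling is in place, a standard Strichartz bootstrap closes the estimate and produces scattering in $H^1$.
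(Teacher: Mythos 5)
Your overall scheme matches the paper's: (i) trap the flow below the ground-state threshold via the sharp Gagliardo--Nirenberg inequality and extract a uniform $H^1$ bound with coercivity, (ii) run a truncated virial/Morawetz argument using the radial Sobolev inequality, and (iii) invoke a gHartree analogue of Tao's scattering criterion. Step (i) is in line with Lemma~\ref{Coercivity IgH}, and step (iii) is the right endgame. However, there is a genuine gap in step (ii), precisely at the point you yourself flag as the ``main obstacle,'' and your proposed workaround is not the one that closes it.

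You propose that the Morawetz argument should output
$$
\int_0^T\int_{|x|\le R}\bigl(|x|^{-(N-\gamma)}\ast|v|^p\bigr)|v|^p\,dx\,dt\lesssim R,
$$
i.e., spacetime control of the localized potential energy $P(v)$, in direct analogy with the NLS case where one bounds $\int_0^T\int_{|x|\le R}|u|^{p+1}$. The problem is that this quantity does not feed into the scattering criterion: the criterion (Lemma~\ref{Scat_crit_gH}) requires local mass smallness, $\liminf_{t\to\infty}\int_{|x|\le R}|v(t)|^2\,dx\le\eps$, and there is no direct H\"older-on-a-ball step from smallness of the localized $P(v)$ to smallness of the local $L^2$ norm, because the convolution $|x|^{-(N-\gamma)}\ast|v|^p$ evaluated inside $B(0,R)$ mixes near- and far-field data of $v$; ruling out concentration from this nonlocal term is exactly what ``is not straightforward'' as emphasized in the introduction. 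Your suggestion of a near/far decomposition of the kernel could in principle be pursued, but it is an additional, uncarried-out step, and in the paper this entire difficulty is sidestepped by changing what quantity the virial coercivity produces. The key move is Lemma~\ref{coercivityballgH_new}: instead of bounding the virial from below by the convolution potential $P(\chi_R v)$, one uses the energy-subcriticality assumption $p<\frac{N+\gamma}{N-2}$ and Sobolev embedding to get
$$
\|\chi_R v\|_{\dot H^1}^2-\tfrac{s(p-1)+1}{p}P[\chi_R v]\ \ge\ \tilde\delta\,\|\chi_R v\|_{L^{\frac{2Np}{N+\gamma}}}^2,
$$
so that the Morawetz estimate (Proposition~\ref{morawetz_est_gH}) controls a genuinely \emph{local} Lebesgue norm $\|v\|_{L^{2Np/(N+\gamma)}(B_R)}$ rather than the nonlocal potential. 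Since $\frac{2Np}{N+\gamma}>2$, H\"older on the ball then immediately yields local mass evacuation, and the scattering criterion applies. Without this replacement, your pigeonhole-on-the-potential argument does not connect to the hypothesis of Tao's criterion, so the proof does not close as written.

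One smaller remark: in step (iii) the Strichartz/HLS bookkeeping for the Duhamel term is indeed nontrivial (the paper needs a specific choice of admissible pairs in Lemma~\ref{Scat_crit_gH} and Appendix~\ref{OM3}, including an inhomogeneous Strichartz estimate to reach the full range $0<s<1$), but you correctly identify this as a technical rather than conceptual issue, so I will not dwell on it.
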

\begin{remark}\label{N3}
We only consider $N > 2$ as we use the dispersive estimate \eqref{dispest} in \eqref{Ng2}, which gives the logarithmic divergence of the integral when $N=2$. 
\end{remark}
\begin{remark}
We do not cover the case $s=1$ (energy-critical) as this approach takes into account an {\it a priori} uniform bound on $\dot{H}^1$ norm of a solution in terms of the energy, and having the gap between the critical index $s_c$ as defined in \eqref{E:scaling}  and $s=1$ is an essential part of the proof. (However, it would be possible to cover this case given an \textit{a priori} uniform $\dot{H}^{s_1}$ bound and consider $s_c<s_1$.)
\end{remark}
To prove the theorems we establish Morawetz estimates for both equations in the inter-critical regime by employing the radial Sobolev inequality. This implies that the potential energy escapes as $t\rightarrow\infty$, which in turn yields spreading of the mass. To obtain the scattering and conclude the proofs of Theorems \ref{main1} and \ref{main2}, we generalize the scattering criterion of Tao from \cite{Tao04} (for the 3d cubic NLS) to all inter-critical cases of NLS and also obtain the scattering criterion for the gHartree equation.

\begin{remark}
Theorem \ref{main2} was also proved via the concentration-compactness method in \cite{AKAR}. Nevertheless, we show that in the radial case this new approach can also be applied in the case of the nonlocal convolution nonlinearity, i.e., for the gHartree equation. 
\end{remark}
We also note that the treatment of the \textit{gHartree} case is different from the NLS case, in particular, 
\begin{itemize}
\item
the estimate of inhomogeneous term in the Duhamel formula via Strichartz estimate in Lemma \ref{Scat_crit_gH} (using Lemma \ref{HLS} to handle the convolution term), and
\item
most importantly, in the Morawetz estimate for gHartree, one expects (as in the NLS case) to obtain the upper bound on the potential term, $P(v)$ which is of convolution type given by $P(v) =\int_{\R^N}(|x|^{-(N-\gamma)}\ast|v(\,\cdot\,,t)|^p)|v(x,t)|^{p}\,dx$. It is not straightforward to rule out the concentration of mass from the potential (convolution) term in the gHartree case. Thus, when deriving the Morawetz estimate (Proposition \ref{morawetz_est_gH}), we rely on the  $L^{\frac{2Np}{N+\gamma}}_x$ norm. This is possible since in Lemma \ref{coercivityballgH_new}, we bound the virial from below using Sobolev inequality (thanks to the assumption $s<1$, i.e., $p<\frac{N+\gamma}{N-2}$) by $L^{\frac{2Np}{N+\gamma}}_x$ norm
$$
\|\nabla v\|_{L^2(\R^N)}^2-\frac{s(p-1)+1}{p}P(v)\geq \delta \|\nabla v\|^2_{L^2(\R^N)}\geq \tilde{\delta}\|v\|_{L^{\frac{2Np}{N+\gamma}}_x(\R^N)}
$$
 instead of using the $P(v)$, convolution type potential term. Observing that $\frac{2Np}{N+\gamma}>2$ allows us to obtain the mass evacuation to conclude scattering.
\end{itemize}

The paper is organized as follows: in Section \ref{pre}, we review Strichartz estimates and relevant background. We extend Tao's scattering criteria to a general case of \eqref{NLS} and \eqref{gH} in Lemma \ref{Scat_crit} and Lemma \ref{Scat_crit_gH}, respectively. In Section \ref{var}, we review some properties of the corresponding ground states $Q$ and prove coercivity estimates for the equation \eqref{NLS} in Section \ref{variational} and for the equation \eqref{gH} in Section \ref{variational_gH}. Finally, in Section \ref{mainNLS} and Section \ref{maingH}, we prove Theorem \ref{main1} and Theorem \ref{main2}, respectively. For that we use the Morawetz estimate (Proposition \ref{morawetz_est} for \eqref{NLS} and Proposition \ref{morawetz_est_gH} for \eqref{gH}) together with the respective energy evacuation (Proposition \ref{energyevac} and Proposition \ref{energyevac_gH}) and scattering criteria (Lemma \ref{Scat_crit} and Lemma \ref{Scat_crit_gH}).
	
At the time of publication, an article following the method of Dodson-Murphy \cite{DM17} for the inhomogeneous NLS (INLS) was done, from which scattering can be deduce for the standard NLS, see \cite{LC}.
	
\textbf{Acknowledgments.} This work is a part of the author's PhD thesis under the guidance of Svetlana Roudenko. A.K.A would like to thank her 
for suggesting the topic and countless discussions and help on this paper. The author would also like to thank Jason Murphy and Ben Dodson for their valuable insights and conversations on the subject.
Research of this project was partially funded by S. Roudenko's NSF CAREER grant DMS-1151618, from which A.K.A. was partially supported by the graduate research fellowship.

\section{Preliminaries}\label{pre}
Recall the linear Schr\"odinger evolution from initial data $f_0$
\begin{align}\label{linsol}
f(x,t)=e^{it\Delta}\,f_0(x) =\frac{C}{t^{N/2}}\int_{\R^N}^{}e^{i\frac{|x-y|^2}{4t}}f_0(y)\,dy,
\end{align}
and also the dispersive estimate, $2\leq p\leq \infty$ and $\frac{1}{p}+\frac{1}{p'}=1$,
	\begin{align}\label{dispest}
	\|e^{it\Delta}f_0\|_{L^p(\R^N)}\lesssim|t|^{-\left(\frac{N}{2}-\frac{N}{p}\right)}\|f_0\|_{L^{p'}(\R^N)}.
	\end{align}
	We have the Strichartz inequality
	\begin{align}\label{stri1}
	\|e^{it\Delta}\,f_0\|_{L_{t,x}^{\frac{2(N+2)}{N}}(\R\times\R^N)}\lesssim \|f_0\|_{L^2(\R^N)}
	\end{align}
	and since $e^{it\Delta}$ commutes with derivatives, we have
	\begin{align}\label{stri2}
	\|e^{it\Delta}\,f_0\|_{L_t^{\frac{2(N+2)}{N}}W_x^{s,\frac{2(N+2)}{N}}(\R\times\R^N)}\lesssim \|f_0\|_{H^s(\R^N)}.
	\end{align}
	From Sobolev embedding for $0\leq s\leq 1$, we observe that
	\begin{align}\label{stri3}
		\|e^{it\Delta}\,f_0\|_{L_t^{\frac{2(N+2)}{N}}L_x^{\frac{2N(N+2)}{N^2-2s(N+2)}}(\R\times\R^N)}\lesssim \|f_0\|_{H^s(\R^N)}.
	\end{align}
		We also recall the inhomogeneous version of the above estimate 
			\begin{align}\label{stri4}
	\|u\|_{L_{t,x}^{\frac{2(N+2)}{N}}([t_0,\infty)\times\R^N)}\lesssim\|u(t_0)\|_{L^2(\R^N)}
			+\|F(u)\|_{L_{t,x}^{\frac{2(N+2)}{N+4}}([t_0,\infty)\times\R^N)},
		\end{align}
		where $u$ is a solution to $iu_t + \Delta u = F(u)$. Next, we recall the Hardy-Littlewood-Sobolev (HLS) estimate, which will be used to handle the nonlocal (convolution) term in the generalized Hartree \eqref{gH} setting.

\begin{lemma}[HLS inequality, \cite{Lieb83}]\label{HLS}
For $0<\gamma <N$ and $p>1$, there exists a sharp constant $C=C(N,p,\gamma)>0$ such that
$$
\left\|\int_{\R^N}^{}\frac{u(y)}{|x-y|^{N-\gamma}}\,dy\right\|_{L^q(\R^N)}\leq C\|u\|_{L^p(\R^N)},
$$
where $\frac{1}{q}=\frac{1}{p}-\frac{\gamma}{N}$ and $p<\frac{N}{\gamma}$. 
\end{lemma}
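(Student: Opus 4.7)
The plan is to prove this via a Hedberg-style pointwise estimate combined with the $L^p$-boundedness of the Hardy--Littlewood maximal function. Denote the Riesz potential by $I_\gamma u(x) := \int_{\R^N} u(y)\,|x-y|^{-(N-\gamma)}\,dy$. The strategy is to split the integral at a radius $R>0$ into a \emph{near} piece on $\{|x-y|<R\}$ and a \emph{far} piece on $\{|x-y|\geq R\}$, bound each separately, and then optimize over $R$ pointwise in $x$.

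For the near piece, I would decompose $\{|x-y|<R\}$ into dyadic annuli $A_k := \{2^{-k-1}R \leq |x-y| < 2^{-k}R\}$ and use the fact that the average of $|u|$ over each ball $B(x,2^{-k}R)$ is controlled by $Mu(x)$, the Hardy--Littlewood maximal function. Summing the resulting geometric series yields
$$
\left| \int_{|x-y|<R} \frac{u(y)}{|x-y|^{N-\gamma}} \, dy \right| \lesssim R^{\gamma}\, Mu(x).
$$
For the far piece, H\"older's inequality with conjugate exponent $p'$ gives
$$
\left| \int_{|x-y|\geq R} \frac{u(y)}{|x-y|^{N-\gamma}} \, dy \right| \leq \|u\|_{L^p} \left( \int_{|x-y|\geq R} |x-y|^{-(N-\gamma)p'} \, dy \right)^{1/p'}.
$$
The hypothesis $p < N/\gamma$ is precisely the condition $(N-\gamma)p' > N$ that ensures the integral converges, and a short scaling computation using $1/q = 1/p - \gamma/N$ reduces this bound to $\lesssim \|u\|_{L^p}\, R^{-N/q}$.

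Next, I would optimize $R$ to balance the two bounds, setting $R^{\gamma+N/q} \sim \|u\|_{L^p}/Mu(x)$. Since $\gamma + N/q = N/p$ (by the defining relation of $q$), this gives the Hedberg-type pointwise inequality
$$
|I_\gamma u(x)| \lesssim \|u\|_{L^p}^{1 - p/q}\,(Mu(x))^{p/q}.
$$
Raising to the $q$-th power, integrating over $\R^N$, and invoking the Hardy--Littlewood maximal theorem $\|Mu\|_{L^p} \lesssim \|u\|_{L^p}$, which requires exactly $p>1$, yields $\|I_\gamma u\|_{L^q} \lesssim \|u\|_{L^p}$.

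The main obstacle, if one insists on the \emph{sharp} constant $C(N,p,\gamma)$ claimed in Lieb's theorem, is that this elementary approach yields only \emph{some} finite constant, not the optimal one; the sharp constant and the classification of extremizers (up to conformal symmetries) require the substantially deeper machinery of symmetric decreasing rearrangement together with Lieb's competing symmetries argument. For the applications here, however, namely estimating the nonlocal convolution term in the Strichartz and Morawetz analyses for (\ref{gH}), only a finite constant is needed, so the kernel-splitting proof above is sufficient.
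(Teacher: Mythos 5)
The paper does not prove this lemma at all; it is quoted verbatim from Lieb's 1983 paper as a known ingredient, and no argument is given. So there is no ``paper proof'' to compare against, only the citation.

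Your Hedberg-style argument is a correct and complete proof of the \emph{non-sharp} HLS bound, and all the exponent bookkeeping checks out: the near piece gives $R^{\gamma}Mu(x)$ by summing dyadic annuli, the far piece gives $\|u\|_{L^p}R^{-N/q}$ (using $N/p' - (N-\gamma) = -N/q$, which is exactly where $1/q = 1/p - \gamma/N$ enters, and $(N-\gamma)p'>N$ is precisely $p<N/\gamma$), and the optimizer $R^{N/p}\sim \|u\|_{L^p}/Mu(x)$ yields $|I_\gamma u(x)|\lesssim \|u\|_{L^p}^{1-p/q}(Mu(x))^{p/q}$, after which the $L^p$ maximal inequality (requiring $p>1$) closes the estimate. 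You are also right that this route cannot recover the \emph{sharp} constant $C(N,p,\gamma)$ that the lemma, as stated, attributes to Lieb; that needs rearrangement inequalities and Lieb's competing-symmetries argument. Your closing observation is the important one: for every use of Lemma~\ref{HLS} in this paper (the Strichartz and Morawetz estimates for the gHartree nonlinearity), only finiteness of the constant matters, so your elementary proof fully justifies what is actually invoked. One small presentational caveat: since the lemma's wording explicitly says ``sharp constant,'' if you intended your proof to stand in for the lemma as written you would want to either drop the word ``sharp'' or add exactly the disclaimer you already gave.
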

	
\begin{remark}\label{reisz}
Observe that 
$$
\nabla\left(|x|^{-(N-\gamma)}\right)=C_{N,\gamma}|x|^{-(N-(\gamma-1))}.
$$
\end{remark}

We give the local-in-time analogue of the Strichartz estimates.
\begin{lemma}\label{smallness}
Let $u$ be a solution to \eqref{NLS} or \eqref{gH} satisfying
	\begin{align}\label{unibnd}
		\sup_{t\in[0,\infty)}\|u(t)\|_{H^1(\R^N)}\leq E.
	\end{align}
Then for $2\leq p\leq \frac{2N}{N-2}$
\begin{equation}\label{smallbnd1}
		\sup_{t\in[0,\infty)}\|u(t)\|_{L^p(\R^N)}\lesssim 1,
\end{equation}
and for any $T\geq 0$ and $\tau >0$
\begin{align}\label{smallbnd2}
\|\nabla u\|_{L_t^{\frac{2(N+2)}{N}}L_x^{\frac{2(N+2)}{N}}([T,T+\tau]\times\R^N)}\lesssim\langle\tau\rangle^{\frac{N}{2(N+2)}}.
\end{align}
Furthermore, 
\begin{align}\label{smallbnd3}
\|u\|_{L_t^{\frac{2(N+2)}{N}}L_x^{\frac{2N(N+2)}{N^2-2s(N+2)}}([T,T+\tau]\times\R^N)}\lesssim\langle\tau\rangle^{\frac{N}{2(N+2)}}
\end{align}
and
\begin{align}\label{smallbnd4}
\|u\|_{L_{t,x}^{\frac{2(N+2)}{N-2}}([T,T+\tau]\times\R^N)}\lesssim\langle\tau\rangle^{\frac{N-2}{2(N+2)}}
\end{align}
for $T\geq 0$ and $\tau >0$. 
\end{lemma}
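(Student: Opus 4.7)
Item (1) is immediate: the uniform bound \eqref{unibnd} combined with the Sobolev embedding $H^1(\R^N)\hookrightarrow L^p(\R^N)$ for $2\le p\le \frac{2N}{N-2}$ (valid since $N\ge 3$) yields \eqref{smallbnd1}.

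For the space-time bounds (2)-(4), my plan is to use a local-in-time Strichartz bootstrap followed by a tiling and summation argument. One first checks that the three pairs in question are Schr\"odinger admissible at the appropriate regularity: $\big(\tfrac{2(N+2)}{N},\tfrac{2(N+2)}{N}\big)$ is $L^2$-admissible, $\big(\tfrac{2(N+2)}{N},\tfrac{2N(N+2)}{N^2-2s(N+2)}\big)$ is $\dot H^s$-admissible, and $\big(\tfrac{2(N+2)}{N-2},\tfrac{2(N+2)}{N-2}\big)$ is $\dot H^1$-admissible (these are exactly the pairs underlying \eqref{stri2}-\eqref{stri3} together with the energy-critical pair). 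The first step is to establish, on any interval $[T,T+\delta]$ with $\delta=\delta(E)>0$ to be chosen, the uniform bound
$$
\|\nabla u\|_{L^{q}_t L^{q}_x([T,T+\delta]\times\R^N)} \lesssim 1, \qquad q=\tfrac{2(N+2)}{N},
$$
and its analogues for the other two pairs. Apply Strichartz to the Duhamel formula restarted at $T$: the homogeneous piece is controlled by $\|u(T)\|_{H^1}\le E$, while the inhomogeneous piece is handled by H\"older in time (which produces a factor $\delta^{\alpha}$ with $\alpha>0$), combined with \eqref{smallbnd1} to absorb the low-order $L^r_x$ norms arising from the nonlinearity, and dual Strichartz to put the highest Strichartz norm back on the left-hand side. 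For $\delta$ small enough depending only on $E$, the bootstrap closes.

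The second step is to tile: cover $[T,T+\tau]$ by $\lceil\tau/\delta\rceil$ intervals of length $\delta$, sum the $q$-th powers of the left-hand side over these subintervals, and take the $q$-th root. Since the time exponents in \eqref{smallbnd2}-\eqref{smallbnd4} are respectively $\tfrac{2(N+2)}{N}$, $\tfrac{2(N+2)}{N}$, and $\tfrac{2(N+2)}{N-2}$, this summation produces $\langle\tau\rangle^{N/(2(N+2))}$ in the first two cases and $\langle\tau\rangle^{(N-2)/(2(N+2))}$ in the third, matching the claimed factors.

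For the gHartree equation the only structural change is that the nonlinearity $(|x|^{-(N-\gamma)}\ast|v|^p)|v|^{p-2}v$ is nonlocal; the convolution is estimated by the Hardy-Littlewood-Sobolev inequality (Lemma \ref{HLS}) before applying H\"older and dual Strichartz exactly as in the NLS case. I expect the main technical difficulty to lie in the exponent bookkeeping of the bootstrap step, where one must simultaneously arrange (i) H\"older compatibility in space (with HLS for gHartree), (ii) a strictly positive power of $\delta$ from H\"older in time, and (iii) Strichartz admissibility of the resulting dual pair, while remaining within the regime $s<1$ so that Sobolev embedding converts the uniform $H^1$ bound into the required $L^r_x$ control on the nonlinearity.
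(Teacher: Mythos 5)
Your proposal follows essentially the same route as the paper: item (1) by Sobolev embedding, then a local-in-time Strichartz estimate on intervals of length depending only on $E$, followed by a tiling/summation argument producing $\langle\tau\rangle^{1/q}$. The only point of divergence is the mechanism for closing the local bootstrap. You propose to place the derivative on one Strichartz copy of $\nabla u$, put the remaining $p-1$ copies of $u$ in $L^\infty_t L^r_x$ (controlled by \eqref{smallbnd1}), and gain a positive power of the interval length $\delta$ from H\"older in time; this makes the local estimate \emph{linear} in the unknown Strichartz norm and closes by absorption. The paper instead works inside the space $X_\tau$ (which contains the Strichartz norms $L^{2(N+2)/N}_{t,x}$, $L^{2(N+2)/(N-2)}_{t,x}$, $L^{2(N+2)/N}_t L^{2N(N+2)/(N^2-2s(N+2))}_x$, and $L^{2(N+2)/N}_{t,x}$ of $\nabla u$), estimates the nonlinearity by splitting the $p-1$ factors of $u$ as $\|u\|^{(1-s)(p-1)}_{L^{2(N+2)/N}_{t,x}}\|u\|^{s(p-1)}_{L^{2(N+2)/(N-2)}_{t,x}}$, and closes via a smallness ansatz $\|u\|_{X_\tau}<\varepsilon$ and a continuity argument — no H\"older-in-time gain is needed because the nonlinear term is $\|u\|_{X_\tau}^p$. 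Your linear variant is, if anything, slightly cleaner conceptually, but as you anticipate the exponent bookkeeping is a genuine constraint: to gain $\delta^{\alpha}$ with $\alpha>0$ while keeping the spatial exponents for $u$ inside the Sobolev range, you are forced to take $r$ essentially at the endpoint $\tfrac{2N}{N-2}$ and a non-diagonal admissible pair for $\nabla u$, and the constraint is precisely $p-1<\tfrac{4}{N-2}$, i.e. $s<1$, so it does close throughout the intercritical regime. For gHartree both versions need HLS to reduce the convolution to $L^p_x$ powers of $v$, as you note and as the paper does with the dual pair $\bigl(\tfrac{2(N+2)}{N+2(\gamma+2)},\tfrac{2N(N+2)}{N^2+4(N-\gamma)}\bigr)$.
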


\begin{proof} The proof follows a standard argument, which can be found in \cite{C03}. For completeness we outline the proof and provide the estimates for gHartree at the end.  First note that in the NLS case, the inequality \eqref{smallbnd1} is a straightforward application of the Sobolev embedding \eqref{unibnd}. In order to get 
\eqref{smallbnd2}, take $T>0$ and fix $\tau>0$. Since both $\|u(t)\|_{H^1}$ and $\| u(t)\|_{L^p}$, $2\leq p \leq \frac{2N}{N-2}$, are bounded uniformly in time by \eqref{unibnd} and \eqref{smallbnd1}, the value of $u(T)$ can be taken as the initial data with the solution being well-defined on the interval $I_\tau=[T, T+\tau]$.  
We show \eqref{smallbnd2} and subsequent inequalities  
for $\tau >0$ sufficiently small. 
Then the claim for larger times follows by 
decomposing the time interval into smaller intervals.  

For any $T>0$ and $\tau>0$, define the space\footnote{Note that the pair $\left(\frac{2(N+2)}{N-2},\frac{2(N+2)}{N-2}\right)$ is $\dot{H}^1$-admissible, and $\left(\frac{2(N+2)}{N}, \frac{2N(N+2)}{N^2-2s(N+2)}\right)$ is $\dot{H}^s$-admissible.} 
\begin{align*}
X_\tau = \bigg\{u\in L_t^{\infty}H_x^1 &{(I_\tau\times\R^N)}: \,(1+\partial_x)\,u\in L_{t,x}^{\frac{2(N+2)}{N}}{(I_\tau\times\R^N)},\\
&u\in L_t^{\frac{2(N+2)}{N}}L_x^{\frac{2N(N+2)}{N^2-2s(N+2)}}{(I_\tau\times\R^N)}\,\cap \, L_{t,x}^{\frac{2(N+2)}{N-2}}{(I_\tau\times\R^N)} \bigg\}.
\end{align*}
Assume that $\tau$ is sufficiently small so that $\ds \|u\|_{X_\tau} < \eps<1$, we obtain		
\begin{align*}\label{gHref}
\|u\|_{X_\tau}&\lesssim \|u(T)\|_{H^1(\R^N)}+\|u\|^{(1-s)(p-1)}_{L_{I_\tau,x}^{\frac{2(N+2)}{N}}}\|u\|^{s(p-1)}_{L_{I_\tau,x}^{\frac{2(N+2)}{N-2}}}\|\nabla u\|_{L_{I_\tau,x}^{\frac{2(N+2)}{N}}}\lesssim \|u(T)\|_{H^1(\R^N)}+\|u\|^p_{X_\tau}.
\end{align*}
Using the smallness of $\|u\|_{X_\tau}$, we deduce $\|u\|_{X_\tau} \leq c_{N,p} \|u(T)\|_{H^1(\R^N)}$. In particular, 
\begin{equation*}
\|\nabla u\|_{L_{I_\tau,x}^{\frac{2(N+2)}{N}}}^{\frac{2(N+2)}{N}}
=\int_{T}^{T+\tau}\|\nabla u\|^{\frac{2(N+2)}{N}}_{L_x^{\frac{2(N+2)}{N}}}\,d\tau
\leq c \, \tau,
\end{equation*} 
and taking $\frac{2(N+2)}{N}$th power of both sides, we get \eqref{smallbnd2}.
 
Now for any $\tau$, we subdivide the interval $[T, T+\tau]$ into the intervals $I_j$ of length $\tau_0$ (that is $I_j = [T+j\tau_0, T+(j+1)\tau_0]$, $0 \leq j \leq k$ ) such that $\ds \|u\|_{X_{I_j}} \lesssim \eps $, where $\eps = \eps(E)$, this can be done because of the assumption \eqref{unibnd} (for a similar argument see \cite{CR07}).
Let $k=\left[\frac{\tau}{\tau_0} \right]$. We run the above argument on each $I_j$ 
obtaining $\ds \|u\|_{X_{I_j}} \leq c_{N,p} \|u(T+j\tau_0)\|_{H^1(\R^N)}$. Finally, adding all the pieces together and recalling the bound \eqref{unibnd}, we get
\begin{align*}
	\|\nabla u\|_{L_{I_\tau,x}^{\frac{2(N+2)}{N}}}^{\frac{2(N+2)}{N}}\lesssim\sum_{j=0}^{k+1}\int_{T+j\tau_0}^{T+(j+1)\tau_0}\|\nabla u\|^{\frac{2(N+2)}{N}}_{L_x^{\frac{2(N+2)}{N}}}\,d\tau\leq c(N,p,E) \epsilon^{\frac{2(N+2)}{N}} (1+k)\,\,\lesssim_E\,\,\langle\tau\rangle,
\end{align*}
which implies the desired estimate. In a similar fashion we obtain the bounds
\begin{align*}
\|u\|_{L_t^{\frac{2(N+2)}{N}}L_x^{\frac{2N(N+2)}{N^2-2s(N+2)}}}\,\,\lesssim_{E}\,\,\langle\tau\rangle^{\frac{N}{2(N+2)}}\quad
\text{and}\quad
\|u\|_{L_{t,x}^{\frac{2(N+2)}{N-2}}}\,\,\lesssim_{E}\,\,\langle\tau\rangle^{\frac{N-2}{2(N+2)}}.
\end{align*}

The argument for the gHartree equation \eqref{gH} follows a similar strategy as for the NLS except the use of a different $L^2$ dual pair, since there is an additional step involved because of the convolution in the nonlinear term. Thus,  
\begin{align}\label{gHref1}\notag
	\|v\|_{X_\tau}\lesssim\|\nabla v\|_{L_{I_\tau,x}^{\frac{2(N+2)}{N}}}\lesssim&\|v(T)\|_{H^1(\R^N)}\\
	&+\|\nabla\big((|x|^{-(N-\gamma)}\ast|v|^p)|v|^{p-2}v\big)\|_{L_{I_\tau}^\frac{2(N+2)}{N+2(\gamma+2)}L_x^{\frac{2N(N+2)}{N^2+4(N-\gamma)}}}
\end{align}
To estimate the second term in \eqref{gHref1}, we first use H\"older's and product rule to obtain
\begin{align*}
\|\nabla\big(&(|x|^{-(N-\gamma)}\ast|v|^p)|u|^{p-2}v\big)\|_{L_{I_\tau}^\frac{2(N+2)}{N+2(\gamma+2)}L_x^{\frac{2N(N+2)}{N^2+4(N-\gamma)}}}\\
	\lesssim&\,\|\nabla\left(|x|^{-(N-\gamma)}\ast|v|^p\right)\|_{L_{I_\tau}^{\frac{2(N+2)}{N+\gamma+2}}L^{\frac{2N(N+2)}{N(N+\gamma+2)-2\gamma(N+2)}}_{x}}\||v|^{p-2}v\|_{L^{\frac{2(N+2)}{\gamma+2}}_{I_\tau,x}}\\
	\,+&\|\left(|x|^{-(N-\gamma)}\ast|v|^p\right)\|_{L_{I_\tau}^{\frac{2(N+2)}{(N-2s)p}}L^{\frac{2N(N+2)}{N(N-2s)p-2\gamma(N+2)}}_{x}}\|\nabla\left(|v|^{p-2}v\right)\|_{L^{\frac{2(N+2)}{N+(N-2s)(p-2)}}_{I_\tau,x}},
\end{align*}
where we used \eqref{E:scaling} to deduce that $(N-2s)(p-1)=\gamma+2$. Now, applying Lemma \ref{HLS}, we continue bounding the above expression
\begin{align*}
	\lesssim&\,\|\nabla\left(|v|^p\right)\|_{L^{\frac{2(N+2)}{N+\gamma+2}}_{I_\tau,x}}\|v\|^{p-1}_{L^{\frac{2(N+2)}{N-2s}}_{I_\tau,x}}+\|v\|^p_{L^{\frac{2(N+2)}{N-2s}}_{t,x}}\|v\|^{p-2}_{L^{\frac{2(N+2)}{N-2s}}_{I_\tau,x}}\|\nabla v\|_{L^{\frac{2(N+2)}{N}}_{I_\tau,x}}\\
	\lesssim&\,\|\nabla v\|_{L^{\frac{2(N+2)}{N}}_{I_\tau,x}}\|v\|^{p-1}_{L^{\frac{2(N+2)}{N-2s}}_{I_\tau,x}}\|v\|^{p-1}_{L^{\frac{2(N+2)}{N-2s}}_{I_\tau,x}}+\|v\|^{2(p-1)}_{L^{\frac{2(N+2)}{N-2s}}_x}\|\nabla v\|_{L^{\frac{2(N+2)}{N}}_{I_\tau,x}}\\
	\lesssim&\,\|v\|^{2(p-1)}_{L^{\frac{2(N+2)}{N-2s}}_{I_\tau,x}}\|\nabla v\|_{L^{\frac{2(N+2)}{N}}_{I_\tau,x}},
\end{align*}
where for the second inequality we used $\nabla(|v|^p)=c|v|^{p-1}\nabla v$ and then H\"older's for the norm $L^{\frac{2(N+2)}{N+\gamma+2}}_{I_\tau,x}$, and the third inequality follows from the Sobolev embedding. The remaining proof follows the same steps.
\end{proof}

Next we recall the radial Sobolev inequality, which is one of the key ingredients in 
\cite{DM17}.

\begin{lemma}[radial Sobolev, 
\cite{S77}]\label{radsob} 
For $f\in H^1_{rad}(\R^N)$, $N\geq 2$ and $R>0$, we have
$$
\sup_{|x|\geq R} |f(x)|\leq \frac{1}{R^{\frac{N-1}{2}}} \|f\|_{L^2(\R^N)}^{\frac{1}{2}} \|\nabla f \|_{L^2(\R^N)}^{\frac{1}{2}}.
$$
\end{lemma}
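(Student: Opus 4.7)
Since the estimate only depends on $\|f\|_{L^2}$ and $\|\nabla f\|_{L^2}$, by a standard density argument I reduce to the case $f\in C_c^\infty(\R^N)$ radial, and write $f(x)=g(|x|)$ with $g\in C_c^\infty([0,\infty))$. The plan is then to obtain pointwise control on $|g(r)|^2$ from the fundamental theorem of calculus, weighted so that the resulting one-dimensional integrals match the spherical-coordinate formulas for $\|f\|_{L^2}^2$ and $\|\nabla f\|_{L^2}^2$. The natural weight is $s^{N-1}$, which suggests differentiating the quantity $s^{N-1}|g(s)|^2$.

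Because $g$ is compactly supported, for any $r\ge R$ one has
\[
r^{N-1}|g(r)|^{2}=-\int_{r}^{\infty}\frac{d}{ds}\bigl[s^{N-1}|g(s)|^{2}\bigr]\,ds
=-\int_{r}^{\infty}\Bigl[(N-1)s^{N-2}|g|^{2}+2s^{N-1}\Re\bigl(\bar g\,g'\bigr)\Bigr]\,ds.
\]
The first term on the right is nonpositive, so discarding it gives the clean bound
\[
r^{N-1}|g(r)|^{2}\le 2\int_{r}^{\infty}s^{N-1}|g(s)||g'(s)|\,ds.
\]
This is the main estimate; the useful feature of $N\ge 2$ is precisely that the dropped term has a favorable sign (for $N=1$ this identity degenerates).

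Next I apply Cauchy--Schwarz in the $s$-variable with weight $s^{N-1}$:
\[
2\int_{r}^{\infty}s^{N-1}|g||g'|\,ds
\le 2\Bigl(\int_{r}^{\infty}s^{N-1}|g|^{2}\,ds\Bigr)^{1/2}\!\!\Bigl(\int_{r}^{\infty}s^{N-1}|g'|^{2}\,ds\Bigr)^{1/2}.
\]
Converting back to Cartesian coordinates, $\int_r^\infty s^{N-1}|g(s)|^2\,ds=\omega_{N-1}^{-1}\int_{|x|\ge r}|f(x)|^2\,dx\le \omega_{N-1}^{-1}\|f\|_{L^2}^2$, and similarly $\int_r^\infty s^{N-1}|g'(s)|^2\,ds\le \omega_{N-1}^{-1}\|\nabla f\|_{L^2}^2$ since $|g'(s)|=|\partial_r f|\le|\nabla f|$. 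Combining,
\[
r^{N-1}|f(r)|^{2}\le \tfrac{2}{\omega_{N-1}}\,\|f\|_{L^{2}}\|\nabla f\|_{L^{2}}.
\]
Finally, for $N\ge 2$ the surface measure satisfies $\omega_{N-1}\ge 2\pi>2$, hence $2/\omega_{N-1}\le 1$. Dividing by $r^{N-1}$ and taking the supremum over $r\ge R$ yields the stated inequality. The only mildly delicate point in the plan is the density step: to approximate a general $f\in H^1_{\mathrm{rad}}$ by smooth compactly supported radial functions so that both sides of the inequality pass to the limit (the right side by $H^1$ convergence, the left side by continuous representatives away from the origin, which is ensured by the very estimate being proved on the approximations).
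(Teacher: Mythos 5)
The paper cites \cite{S77} for this lemma and does not prove it internally, so there is no in-paper argument to compare against. Your route is the standard Strauss argument: reduce to smooth compactly supported radial $f$, apply the fundamental theorem of calculus to $s^{N-1}|g(s)|^2$, drop the term $(N-1)s^{N-2}|g|^2$ using its favorable sign (exactly where $N\geq 2$ enters), use Cauchy--Schwarz with weight $s^{N-1}$, and convert back to spherical coordinates. All of this is correct and yields
\[
r^{N-1}|f(r)|^{2}\le \frac{2}{\omega_{N-1}}\,\|f\|_{L^{2}}\|\nabla f\|_{L^{2}},
\]
where $\omega_{N-1}=2\pi^{N/2}/\Gamma(N/2)$ is the surface area of the unit sphere $S^{N-1}\subset\R^N$.

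The final step, however, contains a genuine error: the claim ``$\omega_{N-1}\ge 2\pi>2$ for $N\ge 2$'' is false. The surface area $\omega_{N-1}$ is not monotone in $N$; it peaks around $N=7$ and then tends to $0$ as $N\to\infty$. Concretely, $\omega_{14}\approx 5.72<2\pi$ at $N=15$, and $\omega_{17}\approx 1.48<2$ at $N=18$, so $2/\omega_{N-1}>1$ for $N\ge 18$. As a consequence, the inequality with constant exactly $1$ (as written in the lemma) does not literally hold in all dimensions $N\ge 2$; the sharp constant from your derivation is $(2/\omega_{N-1})^{1/2}$, which exceeds $1$ in sufficiently high dimension. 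In practice this is harmless for the paper, since the estimate is always invoked through $\lesssim$ with dimensional constants absorbed; but the way to close your argument is simply to keep the constant $\sqrt{2/\omega_{N-1}}$ (equivalently, to state the lemma with an implicit constant $C(N)$), not to argue that this constant is bounded by $1$.
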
	
We now obtain a scattering criterion for the NLS for all $s\in(0,1)$, which generalizes Tao's scattering criterion for the 3d cubic NLS for radial solutions ( Theorem 1.1 in \cite{Tao04}). Since the use of Strichartz pairs is delicate, we provide a sketch for NLS and afterwards obtain the estimates for the gHartree equation \eqref{gH} in Lemma \ref{Scat_crit_gH}.

\begin{lemma}[Scattering criterion for NLS]\label{Scat_crit}
Consider $0<s<1$. Suppose $u$ is a radial solution to \eqref{NLS} satisfying
$$
\sup_{t\in[0,\infty)} \|u(t) \|_{H^1(\R^N)}\leq E\quad\text{with}\quad E>0.
$$
If there exist $\varepsilon>0$ and $R >0$ depending only on 
$E$ such that
\begin{equation}\label{masscond}
\liminf_{t\rightarrow\infty}\int_{|x|\leq R}^{}|u(x,t)|^2\,dx\leq\varepsilon,
\end{equation}
then $u(t)$ scatters forward in time in $H^1$.
\end{lemma}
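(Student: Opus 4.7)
The approach follows Tao's scattering criterion for the 3d cubic NLS \cite{Tao04}, adapted to the inter-critical range $0<s<1$. The first step is a standard reduction: by the equivalence of $H^1$-scattering on $[0,\infty)$ with the finiteness of a $\dot H^s$-critical Strichartz norm (for instance, the $\dot H^s$-admissible pair $(q,r) = \bigl(\tfrac{2(N+2)}{N},\tfrac{2N(N+2)}{N^2-2s(N+2)}\bigr)$ appearing in Lemma~\ref{smallness}), combined with the inhomogeneous Strichartz estimate \eqref{stri4}, the nonlinear estimates used in the proof of Lemma~\ref{smallness}, and a continuity argument, there exists a threshold $\eta=\eta(E)>0$ such that it suffices to find some $T > 0$ with
\begin{equation}\label{smallgoal}
\bigl\|e^{i(t-T)\Delta}u(T)\bigr\|_{L^q_t L^r_x([T,\infty)\times\R^N)} \le \eta.
\end{equation}
To reach \eqref{smallgoal}, I pick $T=t_n$ along the sequence from \eqref{masscond} so that $\int_{|x|\le R}|u(T)|^2\,dx \le 2\eps$, expand via Duhamel from $t=0$, and split at $s=T-T_0$ for a parameter $T_0$ to be chosen, decomposing $e^{i(t-T)\Delta}u(T)$ into the linear piece $e^{it\Delta}u_0$ plus a far-in-time and a near-in-time Duhamel piece.

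The linear piece $e^{it\Delta}u_0$ belongs to $L^q_t L^r_x(\R\times\R^N)$ by \eqref{stri3}, so its tail on $[T,\infty)$ vanishes as $T\to\infty$. The far-in-time piece, over $[0,T-T_0]$, satisfies $t-s\ge T_0$, so the dispersive estimate \eqref{dispest} with exponent $r$ (noting $N>2$ for $L^q_t$-integrability of $|t-s|^{-N(1/2-1/r)}$, cf.~Remark~\ref{N3}), together with the uniform Sobolev-based bound $\bigl\||u|^{p-1}u(s)\bigr\|_{L^{r'}_x}\lesssim_E 1$, gives a contribution of size $T_0^{-\sigma}$ for some $\sigma>0$. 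Fixing $T_0 = T_0(\eta,E)$ large and then sending $T\to\infty$ along the subsequence reduces \eqref{smallgoal} to a smallness bound for the near piece
\[
B_2(t) \,:=\, i\int_{T-T_0}^T e^{i(t-s)\Delta}|u|^{p-1}u(s)\,ds.
\]

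For $B_2$, the crux, the mass-concentration hypothesis \eqref{masscond} is fully exploited. Combining $\int_{|x|\le R}|u(T)|^2\,dx\le 2\eps$ with the pointwise radial decay $|u(T,x)|\lesssim E|x|^{-(N-1)/2}$ on $|x|\ge R$ from Lemma~\ref{radsob}, and invoking log-convexity on the inner ball (interpolating the small $L^2$-mass against $\|u(T)\|_{L^{2N/(N-2)}}\lesssim E$) together with direct integration of the pointwise tail on the exterior, I obtain smallness of $\|u(T)\|_{L^{p_0}_x(\R^N)}$ for any intermediate exponent $p_0 \in \bigl(\tfrac{2N}{N-1},\tfrac{2N}{N-2}\bigr)$. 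Rewriting $B_2(t) = e^{i(t-T)\Delta}\bigl[u(T)-e^{iT_0\Delta}u(T-T_0)\bigr]$ and splitting $[T,\infty) = [T,T+\tau_0]\cup[T+\tau_0,\infty)$, the outer subinterval is handled by the dispersive estimate with $L^{p_0'}_x$-data (small thanks to the above) and the short subinterval $[T,T+\tau_0]$ by a Strichartz interpolation between this small $L^{p_0}_x$-norm and the bounded $\dot H^s$-norm of $u(T)$. The principal obstacle is precisely this step: neither the $L^2$ nor the $\dot H^s$ norm of $w:=u(T)-e^{iT_0\Delta}u(T-T_0)$ is itself small (both see the full mass and energy of $u$), so the $L^{p_0}_x$-smallness generated by the local-mass/radial-decay combination must be converted into the $\dot H^{s_c}$-admissible Strichartz norm despite the dispersive kernel being singular at $t=T$. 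Balancing $\tau_0$, $R$, and $\eps$ in that order closes the estimate and completes the verification of \eqref{smallgoal}.
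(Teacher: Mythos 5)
Your overall architecture — reduce to smallness of a critical Strichartz norm of $e^{i(t-T)\Delta}u(T)$, split Duhamel into linear, far, and near pieces, treat the far piece by the dispersive estimate, and aim the hypothesis \eqref{masscond} at the near piece — matches the paper's strategy in spirit. But your treatment of the near piece has a genuine gap, which you yourself flag; the ingredient you are missing is the \emph{mass-flux propagation} that the paper uses precisely at that point, and without it your estimate does not close.

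Concretely: you only exploit the smallness of the local mass \emph{at the single time} $T$, then try to route the resulting smallness of $\|u(T)\|_{L^{p_0}_x}$ into the Strichartz norm of $B_2$ via the rewriting $B_2(t)=e^{i(t-T)\Delta}\bigl[u(T)-e^{iT_0\Delta}u(T-T_0)\bigr]$. Two problems arise. First, for the tail $[T+\tau_0,\infty)$ you invoke the dispersive estimate with ``$L^{p_0'}_x$-data''. Since $p_0\in(2N/(N-1),\,2N/(N-2))$ one has $p_0'<2$, and an $H^1(\R^N)$ function need not lie in $L^{p_0'}$ at all (a radial $H^1$ function may decay only like $|x|^{-(N-1)/2}$, which fails to be $L^{p_0'}$ for $p_0'<2N/(N-1)$, in particular for $p_0'<2$). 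Your local-mass/radial-Sobolev argument gives smallness of $\|u(T)\|_{L^{p_0}}$ for exponents $p_0>2$, which is the wrong side for the dispersive estimate. Second, on the short interval $[T,T+\tau_0]$ the dispersive kernel is singular, and a ``Strichartz interpolation between $L^{p_0}_x$ and $\dot H^s$'' does not produce smallness: the $\dot H^s$-based Strichartz norm of $e^{i(t-T)\Delta}u(T)$ is only $O(1)$, and one cannot downgrade to $L^{p_0}_x$-data inside a Strichartz bound. You acknowledge that ``neither the $L^2$ nor the $\dot H^s$ norm of $w$ is small'' and that the conversion ``must be closed by balancing $\tau_0$, $R$, $\eps$,'' but as written no such balance is exhibited, and the $L^{p_0'}$ issue shows it cannot be done along these lines.

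What the paper does instead, and what your outline omits, is the mass-flux estimate: since $\bigl|\partial_t\!\int\chi_R|u|^2\,dx\bigr|\lesssim 1/R$, once the localized mass is $\lesssim\eps$ at a single time $T_1$, it remains $\lesssim\eps$ over the \emph{entire near window} $I_1=[T_1-\eps^{-\alpha},T_1]$ provided $R$ is chosen large enough depending on $\eps$ (one needs $\eps^{-\alpha}/R\lesssim\eps$). This is crucial: it upgrades a pointwise-in-time smallness to a uniform-in-time smallness, which combined with the radial Sobolev bound outside $B(0,R/2)$ yields $\|u\|_{L^\infty_t L^{2N/(N-2s)}_x(I_1\times\R^N)}\lesssim\eps^{1-s}$. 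The near Duhamel term is then estimated \emph{directly} by an inhomogeneous Strichartz bound and H\"older's inequality in time, never resorting to dispersive decay near $t=T_1$. Note also that the paper's parameter order is different from yours: $\alpha$ is fixed from $(s,p,N)$, the near window has length $\eps^{-\alpha}$ growing as $\eps\to0$, and $R$ is then chosen depending on $\eps$; in your sketch $T_0$ is fixed depending only on $(\eta,E)$, so you lose the interaction between the window length and $\eps$ that makes the paper's bookkeeping work. Incorporating the mass-flux propagation and estimating $B_2$ directly via Strichartz (rather than by the $w$-rewriting) is what is needed to make the argument sound.
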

\begin{remark}
	The proof below uses a symmetric pair in \eqref{OM-change} as in the scattering criterion in the nonradial paper by Dodson-Murphy \cite{DM18}, except for a different dual pair in \eqref{OM-change3}, restricting the range to $0<s<1/2$. For the entire intercritical range $0<s<1$, the proof is in Appendix \ref{OM}, accommodating the inhomogeneous Strichartz estimate from Foschi \cite[Theorem 1.4]{F05}.      
\end{remark}
\begin{proof}
Let $0<\eps <1$ be a small constant and $R(\varepsilon)\gg 1$ be a large number, both to be chosen later. From \eqref{stri3}, we have
\begin{equation}\label{OM-change}
	\|e^{it\Delta}\,u_0 \|_{L^{\frac{2(N+2)}{N-2s}}_{t,x}(\R\times\R^N)}\lesssim 1.
\end{equation}
By monotone convergence we may find a (large enough) time $T_0 > \varepsilon^{-\frac{4(1-s)(N(p-1)-2)}{N}}> 1$ depending on $u$ such that
\begin{align}\label{linearcomp}
\|e^{it\Delta}\,u_0\|_{L^{\frac{2(N+2)}{N-2s}}_{t,x}([T_0,\infty)\times\R^N)} \lesssim \eps.	
\end{align}
By the hypothesis, we may choose $T_1>T_0$ so that
$$
\int_{|x|\leq R}^{}|u(x,T_1)|^2\,dx\lesssim \varepsilon. 
$$
We denote by $\chi$ a smooth, radially symmetric function on $\R^N$ with supp $\chi\subset B(0, 1)$, which equals $1$ on $B(0, 1/2)$. For any $R>0$, we define $\chi_R(x) = \chi(x/R)$, noting that $\chi_R =1$ on $B(0,R/2)$. Then, we deduce from \eqref{unibnd}
$$
\left|\partial_t\int\chi_R(x)|u(x,t)|^2\,dx\right|\lesssim\frac{1}{R},
$$
since $ \|\nabla\chi_R \|_{L^{\infty}(\R)} = O(1/R)$.
Set $0<\alpha < \frac{2(1-s)(N(p-1)-2)}{N}$ 
(observe that $\frac{2(1-s)(N(p-1)-2)}{N}>0$, since $s<1$ and $p>1+\frac{2}{N}$ which is guaranteed from $s>0$) and note that $T_1-\eps^{-\alpha}>0$. Then for large enough $R=R(\varepsilon)\gg 0$ we observe that
\begin{equation}\label{localmass}
\sup_{t\in[T_1-\varepsilon^{-\alpha},T_1]}\int\chi_R(x)|u(x,t)|^2\,dx=\|\chi_Ru\|_{L^{\infty}_tL^2_x([T_1-\varepsilon^{-\alpha},T_1]\times\R^N)}\lesssim\varepsilon.
\end{equation}
We estimate the solution $u$ at time $T_1$:
\begin{align}\label{duhamelexp1}
u(T_1)=e^{iT_1\Delta}u_0 + i \int_{0}^{T_1}e^{i(T_1-s)\Delta}(|u|^{p-1}u)(s)\,ds,
\end{align}
and for $t\in[0,T_1]$
\begin{align}\label{duhamelexp2}
e^{i(t-T_1)\Delta}u(T_1)=e^{it\Delta}u_0 + F_j(t);\quad j=1,2
\end{align}		
with
$$
F_j(t)=i\int_{I_j}^{}e^{i(t-s)\Delta}(|u|^{p-1}u)(s)\,ds\quad\text{and}\quad I_1=[T_1-\varepsilon^{-\alpha},T_1],\quad I_2=[0,T_1-\varepsilon^{-\alpha}]. 
$$
From \eqref{linearcomp} we note that the contribution from the linear component in \eqref{duhamelexp2} is small. For the second term, using Minkowski's inequality, dispersive estimate \eqref{dispest}, then considering characteristic function $\chi_{I_1}(t^{\prime})$ and  Lemma \ref{HLS}, we obtain for $t\in I_1$, where $I_1 = [T_1-\varepsilon^{-\alpha},T_1]$
\begin{align}\label{OM-change3}
\|\int_{T_1-\varepsilon^{-\alpha}}^{T_1}e^{i(t-s)\Delta}&(|u|^{p-1}u)(s)\,ds\|_{L^{\frac{2(N+2)}{N-2s}}_{t,x}([T_1,\infty)\times\R^N)}\lesssim\||u|^{p-1}u\|_{L_{t}^{2}L_x^{\frac{2N}{N+2(1-s)}}(I_1\times\R^N)}\\\notag
&\lesssim\|u\|^{p-1}_{L_{t}^{(N+2)(p-1)}L_x^{\frac{N(N+2)(p-1)}{2(N+1)}}(I_1\times\R^N)}\|u\|_{L_t^{\frac{2(N+2)}{N}}L_x^{\frac{2N(N+2)}{N^2-2s(N+2)}}(I_1\times\R^N)}\\\notag
&\lesssim\|u\|^{(p-1)-\frac{2}{N}}_{L_t^{\infty}L_x^{\frac{2N}{N-2s}}(I_1\times\R^N)}
\||\nabla|^su\|^{1+\frac{2}{N}}_{L_t^{\frac{2(N+2)}{N}}L_x^{\frac{2(N+2)}{N}}(I_1\times\R^N)}.
\end{align}		
Using \eqref{smallbnd1}, \eqref{localmass} and Lemma \ref{radsob}, we have
\begin{align*}
		&\|u\|_{L^{\infty}_tL^{\frac{2N}{N-2s}}_x(I_1\times\R^N)}=\|u\|_{L^{\infty}_tL^{\frac{2N}{N-2s}}_x(I_1\times B(0,R/2))}+\|u\|_{L^{\infty}_tL^{\frac{2N}{N-2s}}_x(I_1\times \R^N\backslash B(0,R/2))}\\
		&\lesssim\|\chi_Ru\|^{1-s}_{L_t^{\infty}L_x^2(I_1\times\R^N)}\|u\|^{s}_{L_t^{\infty}L_x^{\frac{2N}{N-2}}(I_1\times \R^N)}	+ \|(1-\chi_R)u\|^{2s/N}_{L_t^{\infty}L_x^{\infty}(I_1\times\R^N)}\|u\|^{(N-2s)/N}_{L_t^{\infty}L_x^{2}(I_1\times\R^N)}\\
		&\lesssim\varepsilon^{1-s}+R^{-\frac{2s}{N}},
\end{align*}
and choosing $R > 0$
 such that $R^{-\frac{2s}{N}}\ll\varepsilon^{1-s}$, we get
\begin{align}\label{bound1}
\|u \|_{L^{\infty}_tL^{\frac{2N}{N-2s}}_x(I_1\times\R^N)}\lesssim\varepsilon^{1-s}.
\end{align}
Therefore, we obtain
\begin{align}\label{bound2}
\|\int_{T_1-\varepsilon^{-\alpha}}^{T_1}e^{i(t-s)\Delta}(|u|^{p-1}u)(s)\,ds\|_{L_{t,x}^{\frac{2(N+2)}{N-2s}}([T_1,\infty)\times\R^N)}&
\lesssim\varepsilon^{(1-s)\left(\frac{N(p-1)-2}{N}\right)-\frac{\alpha}{2}}.
\end{align}
Take $0<\beta_1< (1-s) \left(\frac{(N(p-1)-2)}{N}\right)-\frac{\alpha}2 $, then  
\begin{align}\label{2ndterm}
\|\int_{T_1-\varepsilon^{-\alpha}}^{T_1}e^{i(t-s)\Delta}(|u|^{p-1}u)(s)\,ds\|_{L_{t,x}^{\frac{2(N+2)}{N-2s}}([T_1,\infty)\times\R^N)}\lesssim\varepsilon^{\beta_1}.
\end{align}
Recalling the definition of $F_2(u(t))$ from \eqref{duhamelexp2}, we split it via interpolation as
\begin{align*}
\|F_2(u)\|_{L_{t,x}^{\frac{2(N+2)}{N-2s}}([T_1,\infty)\times\R^N)}\lesssim\|F_2(u)\|^{1-s}_{L_{t,x}^{\frac{2(N+2)}{N}}([T_1,\infty)\times\R^N)}\|F_2(u)\|^{s}_{L_{t,x}^\frac{2(N+2)}{N-2}([T_1,\infty)\times\R^N)}.
\end{align*}
By the dispersive estimate \eqref{dispest} for $t\in[T_1,\infty)$, we bound
\begin{align}\label{Ng2}
\|F_2(u(t))\|_{L^{\frac{2(N+2)}{N-2}}_x(\R^N)}\lesssim\int_{0}^{T_1-\varepsilon^{-\alpha}}(t-s)^{-\frac{2N}{N+2}}\|u(s)\|^{p-1}_{L^{p_1}(\R^N)}\|u(s)\|_{L^{p_2}(\R^N)}ds,
\end{align}
where $p_1=\frac{2N}{N-2s}$ and $p_2=\frac{2N(N+2)}{(N+4)(N-2)}$. 
Observe that for $N> 2$, we have $H^1(\R^N)\hookrightarrow L^{p_{i}}(\R^N)$ for $i=1,2$, see also our Remark \ref{N3} about the restriction $N>2$, and thus, by \eqref{unibnd} we obtain
\begin{align}\label{dimrem1}
\|F_2(u(t))\|_{L^{\frac{2(N+2)}{N-2}}_x(\R^N)} 
& \lesssim {((t-T_1)+\varepsilon^{-\alpha})^{-\left(\frac{N-2}{N+2}\right)}},
\end{align}
which gives
\begin{align}\label{E:forbeta2}
\|F_2(u)\|_{L_{t,x}^\frac{2(N+2)}{N-2}([T_1,\infty)\times\R^N)}\lesssim
{\varepsilon^{\frac{\alpha(N-2)}{2(N+2)}}}.
\end{align}
Rewriting $F_2$ via Duhamel's formula applied on $[0, T_1-\eps^{-\alpha}))$, we get
$$
F_2(t)={e^{i(t-T_1+\varepsilon^{-\alpha})\Delta}u(T_1-\varepsilon^{-\alpha})- e^{it\Delta}u(0)},
$$ 
and using the homogeneous Strichartz estimate \eqref{stri1} and \eqref{unibnd}, yields
$$
\|F_2(u)\|_{L^\frac{2(N+2)}{N}_{t,x}([T_1,\infty),\R^N)}\lesssim \|u(T_1-\varepsilon^{-\alpha})\|_{L^2(\R^N)}+\|u(0)\|_{L^2(\R^N)}\lesssim 1.
$$
Take $0<\beta_2< \frac{\alpha(N-2)}{2(N+2)}$, then from \eqref{E:forbeta2}, we estimate $F_2$ as
\begin{align}\label{3rdterm}
\|F_2(u)\|_{L_{t,x}^{\frac{2(N+2)}{N-2s}}([T_1,+\infty)\times\R^N)}\lesssim\varepsilon^{\beta_2}.
\end{align}
Putting together \eqref{linearcomp}, \eqref{2ndterm}, and \eqref{3rdterm} gives
\begin{align}\label{bound}
\|e^{i(t-T_1)\Delta}u(T_1)\|_{L_{t,x}^{\frac{2(N+2)}{N-2s}}([T_1,+\infty)\times\R^N)}\lesssim\varepsilon^{\mu},
\end{align}
where $\mu \leq \min \{\beta_1,\beta_2\}>0$.
		
Note that on $[T_1,t]$, we have
\begin{align}\label{Duhamelexp2}
u(t)=e^{i(t-T_1)\Delta}u(T_1)-i\int_{T_1}^{t}e^{i(t-s)\Delta}|u|^{p-1}u(s)\,ds.
\end{align}
From \eqref{bound} and Strichartz estimate, we observe that
\begin{align*}
\|u \|_{L_{t,x}^{\frac{2(N+2)}{N-2s}}([T_1,+\infty)\times\R^N)}&\lesssim\varepsilon^{\mu}+ \|\nabla( |u|^{p-1}u) \|_{L_{t,x}^{\frac{2(N+2)}{N+4}}([T_1,+\infty)\times\R^N)}\\
&\lesssim \|u \|^{p-1}_{L_{t,x}^{\frac{2(N+2)}{N-2s}}([T_1,+\infty)\times\R^N)} 
\|\nabla u \|_{L_{t,x}^{\frac{2(N+2)}{N}}([T_1,+\infty)\times\R^N)}.
\end{align*}
Thus, from \eqref{stri1}, \eqref{unibnd}, \eqref{3rdterm} and using a standard continuity argument on the nonlinear flow, we observe that
\begin{equation}\label{scatbnd}
\|u\|_{L_{t,x}^{\frac{2(N+2)}{N-2s}}([T_1,+\infty)\times\R^N)}\lesssim\varepsilon^{\mu}.
\end{equation}
Now, we define
\begin{align}\label{finalstate}
u_+:=e^{-iT_1\Delta}u(T_1)-i\int_{T_1}^{+\infty}e^{-is\Delta}F(u(s))\,ds,
\end{align}
and since we have shown that $F(u)$ lies in $L_{t}^{\frac{2(N+2)}{N+4}}W_x^{1,\frac{2(N+2)}{N+4}}([T_1,+\infty)\times\R^N)$, this implies that $u_+\in H^1(\R^N)$. Then from \eqref{Duhamelexp2} and \eqref{finalstate}, we have
$$
u(t)-e^{it\Delta}u_+=i\int_{t}^{+\infty}e^{i(t-s)\Delta}F(u(s))\,ds
$$		
for all $t\geq T_1$, hence
\begin{align*}
\|u(t)-e^{it\Delta}u_+\|_{H^1}			&\lesssim\|(1+|\nabla|)F(u)\|_{S'(L^2;[t,+\infty))}\\
			&\lesssim \|u\|^{p-1}_{L_{t,x}^{\frac{2(N+2)}{N-2s}}([T_1,+\infty)\times\R^N)}\|(1+|\nabla|) u\|_{L_{t,x}^{\frac{2(N+2)}{N}}([T_1,+\infty)\times\R^N)}.
\end{align*} 
By \eqref{unibnd} and \eqref{scatbnd} we observe that the Strichartz norm on $[T_1,+\infty)$ for the above expression is bounded, therefore, the tail has to vanish as $t\rightarrow+\infty$. Hence,
$$
\lim\limits_{t\rightarrow+\infty}||u(t)-e^{it\Delta}u_+||_{H^1(\R^N)}=0.
$$
\end{proof}

We now prove a scattering criterion for the radial solutions to the gHartree equation \eqref{gH}.

\begin{lemma}[Scattering criterion for gHartree]\label{Scat_crit_gH}
Consider $0<s<1$. Suppose $u$ is a radial solution to \eqref{gH} satisfying
$$	
\sup_{t\in[0,\infty)}\|v(t)\|_{H^1(\R^N)}\leq E.
$$	
If there exist constants $\eps>0$ and $R >0$, depending only on 
$E$, such that
\begin{equation}\label{masscond_gH}
\liminf_{t\rightarrow\infty}\int_{|x|\leq R}^{}|v(x,t)|^2\,dx\leq\varepsilon,
\end{equation}
then $v(t)$ scatters forward in time.
\end{lemma}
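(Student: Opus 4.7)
The plan is to reproduce the architecture of the proof of Lemma \ref{Scat_crit}, inserting an application of Lemma \ref{HLS} wherever the pure-power nonlinearity is replaced by the nonlocal convolution $F(v) = (|x|^{-(N-\gamma)}\ast|v|^p)|v|^{p-2}v$. First I would fix $0 < \eps < 1$ and a large $R = R(\eps)$, use the global Strichartz bound \eqref{stri3} and monotone convergence to choose $T_0$ with $\|e^{it\Delta} v_0\|_{L^{\frac{2(N+2)}{N-2s}}_{t,x}([T_0,\infty)\times\R^N)} \lesssim \eps$, and then use \eqref{masscond_gH} to pick $T_1 > T_0$ with $\int_{|x| \leq R} |v(x,T_1)|^2 \, dx \lesssim \eps$. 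The smooth-cutoff identity $\partial_t \int \chi_R |v|^2 \, dx = O(1/R)$ goes through verbatim (it depends only on mass conservation and integration by parts, not on the form of the nonlinearity), and propagates the mass smallness backward to the slab $I_1 = [T_1 - \eps^{-\alpha}, T_1]$, yielding $\|\chi_R v\|_{L^\infty_t L^2_x(I_1 \times \R^N)} \lesssim \eps$ for any $\alpha > 0$ in the same admissible range as in \eqref{localmass}.

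I would then split the Duhamel representation at $T_1$ into the linear piece (already $\lesssim \eps$ by the choice of $T_0$) and two nonlinear pieces $F_1, F_2$ corresponding to integration over $I_1$ and $I_2 = [0, T_1 - \eps^{-\alpha}]$. For $F_1$ I would estimate the scattering norm via the inhomogeneous Strichartz estimate with an appropriate $L^2$-dual pair, then apply Lemma \ref{HLS} to absorb the convolution factor $|x|^{-(N-\gamma)}\ast |v|^p$ into an $L^c_x$ norm of $|v|^p$. After Hölder, the crucial factor is $\|v\|_{L^\infty_t L^{\frac{2N}{N-2s}}_x(I_1 \times \R^N)}$, controlled exactly as in the NLS argument by splitting the spatial integral into $|x| \leq R/2$ and its complement: inside the ball the localised mass bound on $\chi_R v$ interpolates with $\dot H^1$ to yield a gain $\eps^{1-s}$, while outside the ball Lemma \ref{radsob} provides a decay factor $R^{-\sigma}$ made negligible by enlarging $R$. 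Combined with $|I_1| = \eps^{-\alpha}$ and a suitably small $\alpha$, this produces $\|F_1\|_{L^{\frac{2(N+2)}{N-2s}}_{t,x}([T_1,\infty)\times\R^N)} \lesssim \eps^{\beta_1}$ for some $\beta_1 > 0$.

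The term $F_2$ is handled by interpolating its scattering norm between $L^{\frac{2(N+2)}{N}}_{t,x}$ and $L^{\frac{2(N+2)}{N-2}}_{t,x}$. The former is $O(1)$ by the homogeneous Strichartz inequality applied to $F_2(t) = e^{i(t - T_1 + \eps^{-\alpha})\Delta} v(T_1 - \eps^{-\alpha}) - e^{it\Delta} v_0$ together with mass conservation. The latter is estimated via the dispersive estimate \eqref{dispest}: for each $t \in [T_1, \infty)$, Lemma \ref{HLS} turns the spatial norm of $F(v(s))$ into a product of $H^1$-controlled Lebesgue norms (valid for $N > 2$, cf.\ Remark \ref{N3}), and integrating the time decay $((t-T_1)+\eps^{-\alpha})^{-(N-2)/(N+2)}$ over $I_2$ and then in $t$ over $[T_1, \infty)$ yields a gain of the form $\eps^{\alpha(N-2)/(2(N+2))}$. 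Interpolation gives $\|F_2\|_{L^{\frac{2(N+2)}{N-2s}}_{t,x}([T_1,\infty)\times\R^N)} \lesssim \eps^{\beta_2}$. Summing the three pieces and running the standard continuity bootstrap on the nonlinear flow upgrades this to $\|v\|_{L^{\frac{2(N+2)}{N-2s}}_{t,x}([T_1,\infty)\times\R^N)} \lesssim \eps^{\mu}$; defining $v_+ := e^{-iT_1 \Delta} v(T_1) - i \int_{T_1}^{\infty} e^{-is\Delta} F(v(s)) \, ds$ and invoking the Strichartz estimate once more verifies $v_+ \in H^1(\R^N)$ and $\|v(t) - e^{it\Delta} v_+\|_{H^1} \to 0$.

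The main obstacle is the bookkeeping of Hölder and HLS exponents in the treatment of $F_1$. The nonlocality multiplies the effective number of factors of $v$ from $p$ (as in NLS) to $2p - 1$, forcing a delicate rebalancing that will use the scaling identity $(N-2s)(p-1) = \gamma + 2$ from \eqref{E:scaling} to match the admissibility conditions for the chosen Strichartz pair. One must simultaneously ensure positivity of $\beta_1$, that the factor picking up $R^{-\sigma}$ absorbs the right Hölder power, and that the intermediate $L^{p_i}$ norms appearing in the estimate of $F_2$ are controlled by $H^1(\R^N)$ via Sobolev embedding in the range $N > 2$. As in Lemma \ref{Scat_crit}, the elementary choice of $L^2$-dual pair is expected to restrict $s$ to $(0, 1/2)$, with the full intercritical range requiring Foschi's inhomogeneous Strichartz estimate.
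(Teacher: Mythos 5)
Your proposal reproduces, step for step, the paper's own proof of Lemma~\ref{Scat_crit_gH}: the paper likewise runs the architecture of Lemma~\ref{Scat_crit} verbatim (choice of $T_0$, propagation of the localized mass bound to $I_1$, Duhamel split into linear, $F_1$, $F_2$), replaces the pure-power Strichartz/H\"older estimates for $F_1$ and $F_2$ by applications of Lemma~\ref{HLS}, controls the crucial factor $\|v\|_{L^\infty_t L^{2N/(N-2s)}_x(I_1\times\R^N)}$ by the same interior/exterior radial-Sobolev split, and interpolates $F_2$ between the $L^{2(N+2)/N}_{t,x}$ mass bound and the $L^{2(N+2)/(N-2)}_{t,x}$ dispersive bound exactly as you describe. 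You also correctly flag the genuine points of care --- the jump from $p$ to $2p-1$ effective factors of $v$, the use of $(N-2s)(p-1)=\gamma+2$, the Sobolev embeddings requiring $N>2$, and the $0<s<1/2$ restriction of the symmetric pair with the full range deferred to Appendix~\ref{OM3} --- which are precisely the bookkeeping issues the paper's proof addresses.
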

\begin{remark}
	As in the NLS case, the proof below uses a symmetric pair (same as in the NLS \eqref{OM-change}) and thus, works for $0<s<1/2$. For the full intercritical range $0<s<1$, the proof is given in Appendix \ref{OM3}.
\end{remark}
\begin{proof} The proof is similar to Lemma \ref{Scat_crit} except for the estimate for the following terms:
$$
F_1(t)=i\int_{T_1-\varepsilon^{-\alpha}}^{T_1}e^{i(t-s)\Delta}\left((|x|^{-(N-\gamma)}\ast|v|^p)|v|^{p-2}v\right)(s)\,ds
$$
and
$$
F_2(t)=i\int_{0}^{T_1-\varepsilon^{-\alpha}}e^{i(t-s)\Delta}\left((|x|^{-(N-\gamma)}\ast|v|^p)|v|^{p-2}v\right)(s)\,ds,
$$
where $t\in[0,T_1]$. To derive the estimate of $F_1(t)$, we proceed as follows (using the same argument as in Lemma \ref{Scat_crit})
\begin{align*}
\|F_1(v)\|_{L_{t,x}^{\frac{2(N+2)}{N-2s}}([T_1,\infty)\times\R^N)}&\lesssim \| (|x|^{-(N-\gamma)}\ast|v|^p)|v|^{p-2}v\|_{{L_t^{2}L_x^{\frac{2N}{N+2(1-s)}}}(I_1\times\R^N)}
\end{align*}
for $t\in I_1=[T_1-\varepsilon^{-\alpha},T_1]$.	Using the product rule, H\"older's together with the Hardy-Littlewood-Sobolev (Lemma \ref{HLS}) and Sobolev inequalities, we get
\begin{align}\label{F1}\notag
	\|(&|x|^{-(N-\gamma)}\ast|v|^p)|v|^{p-2}v\|_{L_t^{2}L_x^{\frac{2N}{N+2(1-s)}}(I_1\times\R^N)}\\\notag
	\lesssim&\,\||x|^{-(N-\gamma)}\ast|v|^p\|_{L^{\frac{2(N+2)}{N+1}}_tL^{\frac{2N(N+2)}{N^2+2N+2-2s(N+2)-\gamma(N+2)}}_x(I_1\times\R^N)}\|v\|^{p-1}_{L^{2(N+2)(p-1)}_tL^{\frac{2N(N+2)(p-1)}{2(N+1)+\gamma(N+2)}}_x(I_1\times\R^N)}\\\notag
	\lesssim&\,\|v\|^p_{L^{\frac{2(N+2)p}{N+1}}_tL^{\frac{2N(N+2)p}{N^2+2N+2-2s(N+2)+\gamma(N+2)}}_x(I_1\times\R^N)}\|v\|^{p-1}_{L^{2(N+2)(p-1)}_tL^{\frac{2N(N+2)(p-1)}{2(N+1)+\gamma(N+2)}}_x(I_1\times\R^N)}\\\notag
	\lesssim&\,\| v\|_{L^{\frac{2(N+2)}{N}}_{t}L^{\frac{2N(N+2)}{N^2-2s(N+2)}}(I_1\times\R^N)}\|v\|^{2(p-1)}_{L^{2(N+2)(p-1)}_tL^{\frac{2N(N+2)(p-1)}{2(N+1)+\gamma(N+2)}}_x(I_1\times\R^N)}\\
	\lesssim&\,\|v\|^{2(p-1)}_{L^{2(N+2)(p-1)}_tL^{\frac{2N(N+2)(p-1)}{2(N+1)+\gamma(N+2)}}_x(I_1\times\R^N)}\||\nabla|^s v\|_{L^{\frac{2(N+2)}{N}}_{t,x}(I_1\times\R^N)}.
\end{align}

The first term in \eqref{F1} can be estimated as
	\begin{align*}
	\|v\|^{2(p-1)}_{L^{2(N+2)(p-1)}_tL^{\frac{2N(N+2)(p-1)}{2(N+1)+\gamma(N+2)}}_x(I_1\times\R^N)}\lesssim\|v\|^{2(p-1)-\frac{2}{N}}_{L_t^{\infty}L_x^{\frac{2N}{N-2s}}(I_1\times\R^N)}\||\nabla|^sv\|^{\frac{2}{N}}_{L_t^{\frac{2(N+2)}{N}}L_x^{\frac{2(N+2)}{N}}(I_1\times\R^N)}.
	\end{align*}
Therefore, we obtain
$$
\|F_1(v)\|_{L_{t,x}^{\frac{2(N+2)}{N-2s}}(I_1\times\R^N)}\lesssim\varepsilon^{2(1-s)\left(\frac{N(p-1)-1}{N}\right)-\frac{\alpha}{2}}.
$$
	So, for $\alpha<4(1-s)\left(\frac{N(p-1)-1}{N}\right)$ (note that $\frac{N(p-1)-1}{N}>0\implies p>1+\frac{1}{N}$, which holds true since $s>0$), we have that 
\begin{align}\label{2ndtermgH}
\|F_1(v)\|_{L_{t,x}^{\frac{2(N+2)}{N-2s}}(I_1\times\R^N)}\lesssim\varepsilon^{\beta_3},
\end{align}
where $\beta_3>0$ chosen in the same way as in the previous lemma. To derive the estimate for $F_2(t)$ we argue in a similar fashion as in Lemma \ref{Scat_crit}. By interpolation,
\begin{align*}
\|F_2(v)\|_{L_{t,x}^{\frac{2(N+2)}{N-2s}}([T_1,\infty)\times\R^N)}\lesssim\|F_2(v)\|^{1-s}_{L_{t,x}^{\frac{2(N+2)}{N}}([T_1,\infty)\times\R^N)}\|F_2(v)\|^{s}_{L_{t,x}^\frac{2(N+2)}{N-2}([T_1,\infty)\times\R^N)}.
\end{align*}
We bound the last term above for $t\in[T_1,\infty)$ using the dispersive estimate \eqref{dispest} as
$$
\|F_2(v(t))\|_{L_x^{\frac{2(N+2)}{N-2}}(\R^N)}\lesssim\int_{0}^{T_1-\varepsilon^{\alpha}}(t-s)^{-\frac{2N}{N+2}}\|v\|_{L^{p_2}(\R^N)}^p\|v\|^{p-1}_{L^{p_1}(\R^N)}ds,
$$
where $p_1=\frac{2N}{N-2s}$ and $p_2=\frac{2N(N+2)p}{N(N+6)+(N+2)(\gamma-2)}$. Observe that for $N>2$ one has the embedding $H^1(\R^N)\hookrightarrow L^{p_{i}}(\R^N)$ with $i=1,2$,  
and thus, by \eqref{unibnd}
\begin{align}\label{dimrem2}
\|F_2(v(t))\|_{L_x^{\frac{2(N+2)}{N-2}}(\R^N)}\lesssim ((t-T_1)+\varepsilon^{-\alpha})^{-\left(\frac{N-2}{N+2}\right)}.
\end{align}
Next, note that 
$$
F_2(t)=e^{i(t-T_1+\varepsilon^{-\alpha})\Delta} v(T_1-\varepsilon^{-\alpha}) - e^{it\Delta} v(0)),
$$
and using the homogeneous Strichartz estimate \eqref{stri1} and \eqref{unibnd}
$$
\|F_2(v)\|_{L^\frac{2(N+2)}{N}_{t,x}([T_1,\infty),\R^N}\lesssim \|v(T_1-\varepsilon^{-\alpha})\|_{L^2(\R^N)}+\|v(0)\|_{L^2(\R^N)}\lesssim 1.
$$
Similarly, to the step \eqref{3rdterm}, we take $\beta_4>0$ so that
\begin{align}\label{3rdtermgH}
\|F_2(v)\|_{L_{t,x}^{\frac{2(N+2)}{N-2s}}([T_1,+\infty)\times\R^N)}\lesssim\varepsilon^{\beta_4}.
\end{align}
Putting together \eqref{linearcomp}, \eqref{2ndtermgH}, and \eqref{3rdtermgH} gives
\begin{align}\label{boundgH}
\|e^{i(t-T_1)\Delta}v(T_1)\|_{L_{t,x}^{\frac{2(N+2)}{N-2s}}([T_1,+\infty)\times\R^N)}\lesssim\varepsilon^{\nu}
\end{align}
for $0<\nu \leq \min \{\beta_3, \beta_4\}$. For the bound on the nonlinear solution we again consider Duhamel's formula
\begin{align*}
v(t)=e^{i(t-T_1)\Delta}v(T_1)-i\int_{T_1}^{t}e^{i(t-s)\Delta}F(v(s))\,ds,
\end{align*}
where $F(v) = (|x|^{-(N-\gamma)}\ast|v|^p)|v|^{p-2}v$. Taking $L_{t,x}^{\frac{2(N+2)}{N-2s}}([T_1,+\infty)\times\R^N)$ norms, we observe from the linear evolution bound and inhomogeneous Strichartz estimate \eqref{stri4} that
$$
\|v\|_{L_{t,x}^{\frac{2(N+2)}{N-2s}}([T_1,\infty)\times\R^N)}\lesssim\varepsilon^{\nu}+\|\nabla F(v)\|_{L_{t}^{\frac{2(N+2)}{N+2(\gamma+2)}}L_x^{\frac{2N(N+2)}{N^2+4(N-\gamma)}}([T_1,\infty)\times\R^N)}.
$$
By H\"older's, product rule and Lemma \ref{HLS}
\begin{align*}
\|\nabla\big((|x|^{-(N-\gamma)}&\ast|v|^p)|v|^{p-2}v\big)\|_{L_{t}^\frac{2(N+2)}{N+2(\gamma+2)}L_x^{\frac{2N(N+2)}{N^2+4(N-\gamma)}}([T_1,\infty)\times\R^N)}\\
&
\lesssim\,\|v\|^{2(p-1)}_{L^{\frac{2(N+2)}{N-2s}}_{t,x}([T_1,\infty)\times\R^N)}\|\nabla v\|_{L^{\frac{2(N+2)}{N}}_{t,x}([T_1,\infty)\times\R^N)}.
\end{align*}
The remaining proof follows similar reasoning as in Lemma \ref{Scat_crit} and we obtain the $H^1$ scattering of $v(t)$.
\end{proof}

\section{Variational analysis}\label{var}
The variational analysis for NLS is well known, see \cite{HR07}, \cite{HR08}, \cite{G}, however, for the completeness and comparison with gHartree case we include it here. (Note that in NLS we can use the ground state $Q$, though in gHartree we use the sharp constant $C_{GNC}$ as $Q$ in general may not be unique. See more on this in \cite{AKAR}.)
\subsection{The NLS equation}\label{variational}
The ground state solution $Q$ of \eqref{NLSQeq} optimizes the Gagliardo-Nirenberg inequality
$$
\|u\|^{p+1}_{L^{p+1}}\leq C_{GN}\|\nabla u\|^{\frac{N(p-1)}{2}}_{L^2(\R^N)}\|u\|^{2-\frac{(N-2)(p-1)}{2}}_{L^2(\R^N)},
$$
with 
$$
C_{GN} = \frac{2(p+1)}{2N-(N-2)(p+1)} \left(\frac{N(p-1)}{2(p+1)-N(p-1)} \right)^{-N(p-1)/4} \|Q\|^{-(p-1)}.
$$
The Pohozaev identities for $Q$ yield
\begin{align}\label{poh1}
\|Q\|^{1-s}_{L^2(\R^N)}\|\nabla Q\|_{L^2(\R^N)}^s=\left(\frac{2(p+1)}{N(p-1)C_{GN}}\right)^{\frac{1}{p-1}}	
\end{align}
and
\begin{align}\label{poh2}
	M(Q)^{1-s}E(Q)^s=\left(\frac{s}{N}\right)^s\left(\|Q\|^{1-s}_{L^2(\R^N)}\|\nabla Q\|_{L^2(\R^N)}^s\right)^2.
\end{align}
For the the rest of this subsection, we suppose $u_0\in H^1(\R^N)$ and $u(t)$ solves the NLS equation \eqref{NLS}.
\begin{lemma}\label{Coercivity I}
	If $M[u_0]^{\frac{1-s}{s}}E[u_0]<(1-\delta)M[Q]^{\frac{1-s}{s}}E[Q]$ and $\|u_0\|^{\frac{1-s}{s}}_{L^2(\R^N)}\|\nabla u_0\|_{L^2(\R^N)}\leq \|Q\|^{\frac{1-s}{s}}_{L^2(\R^N)}\|\nabla Q\|_{L^2(\R^N)}$, then there exists $\delta_1=\delta_1(\delta)>0$ so that
	$$
	\|u(t)\|^{\frac{1-s}{s}}_{L^2(\R^N)}\|\nabla u(t)\|_{L^2(\R^N)}<(1-\delta_1)\|Q\|^{\frac{1-s}{s}}_{L^2(\R^N)}\|\nabla Q\|_{L^2(\R^N)}
	$$
	for all $t\in I$, where $u\,:\,I\times\R^N\rightarrow\C$ is the maximal lifespan solution to \eqref{NLS}. In particular, $I=\R$ and $u$ is uniformly bounded in $H^1(\R^N)$.
\end{lemma}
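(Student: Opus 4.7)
The plan is to reduce the question to a one-dimensional variational picture governed by the scale-invariant quantity
$$\widetilde y(u) \,:=\, \|u\|^{(1-s)/s}_{L^2(\R^N)}\|\nabla u\|_{L^2(\R^N)}.$$
Multiplying the energy by the conserved factor $M[u]^{(1-s)/s}$ and applying the sharp Gagliardo-Nirenberg inequality stated before \eqref{poh1}, I expect coercivity of the form
$$M[u]^{(1-s)/s}E[u] \,\geq\, G\bigl(\widetilde y(u)\bigr) \,:=\, \tfrac12\widetilde y(u)^2 - \tfrac{C_{GN}}{p+1}\widetilde y(u)^{N(p-1)/2},$$
with the exponents working out precisely because $(p-1)(N-2s)=4$, which is \eqref{E:scaling}. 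Since $p>1+4/N$ in the inter-critical regime, $N(p-1)/2>2$, so $G$ is single-humped: strictly increasing from $0$ up to a unique maximum at some $\widetilde y^{*}>0$, then strictly decreasing to $-\infty$.

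Next, the Pohozaev identities \eqref{poh1}--\eqref{poh2} identify the critical point and the maximal value as
$$\widetilde y^{*} = \|Q\|^{(1-s)/s}_{L^2(\R^N)}\|\nabla Q\|_{L^2(\R^N)},\qquad G(\widetilde y^{*}) = M[Q]^{(1-s)/s}E[Q].$$
Conservation of mass and energy together with the hypothesis then yields, for every $t$ in the maximal lifespan $I$,
$$G\bigl(\widetilde y(u(t))\bigr) \,\leq\, M[u(t)]^{(1-s)/s}E[u(t)] \,=\, M[u_0]^{(1-s)/s}E[u_0] \,<\, (1-\delta)\,G(\widetilde y^{*}).$$
Combined with the initial bound $\widetilde y(u_0) \leq \widetilde y^{*}$ and the continuity of $t\mapsto \widetilde y(u(t))$ coming from $u\in C_tH^1_x$, a standard intermediate-value argument rules out $\widetilde y(u(t))$ ever reaching $\widetilde y^{*}$, for that would force $G(\widetilde y^{*})<(1-\delta)G(\widetilde y^{*})$, a contradiction.

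To extract the quantitative gap $\delta_1$, I would use that $G$ is a continuous bijection from $[0,\widetilde y^{*}]$ onto $[0,G(\widetilde y^{*})]$: there is a unique $\widetilde y_\delta \in (0,\widetilde y^{*})$ solving $G(\widetilde y_\delta)=(1-\delta)G(\widetilde y^{*})$, and setting $\delta_1 := 1 - \widetilde y_\delta/\widetilde y^{*}$ makes the desired inequality $\widetilde y(u(t)) \leq (1-\delta_1)\widetilde y^{*}$ hold for all $t\in I$. Finally, this uniform control on $\|\nabla u(t)\|_{L^2}$, combined with mass conservation, yields $\sup_t\|u(t)\|_{H^1}<\infty$, and the $H^1$ blow-up alternative for \eqref{NLS} forces $I=\R$. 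Nothing in this plan is substantial; the only point requiring real care is the bookkeeping of exponents in the first step, while the only genuine quantitative input is the strict monotonicity of $G$ on $[0,\widetilde y^{*}]$ which makes the map $\delta\mapsto\delta_1$ well-defined and positive.
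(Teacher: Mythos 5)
Your argument is correct and is essentially the same as the paper's: you reduce to a one-variable inequality via mass/energy conservation and sharp Gagliardo--Nirenberg, identify the ground state as the critical point of the resulting single-humped function $G$ through the Pohozaev identities, and then run a continuity (intermediate-value) argument in $t$ to trap $\widetilde y(u(t))$ strictly below $\widetilde y^*$. The paper normalizes by $\|Q\|^{(1-s)/s}_{L^2}\|\nabla Q\|_{L^2}$ to study $f(x)$ in the dimensionless variable $x=\widetilde y(u)/\widetilde y^*$, which makes the critical points $x=0,1$ explicit; this is a purely cosmetic difference from your unnormalized $G$.
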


\begin{proof} While this is a simple and well-known proof, it's the core of the dichotomy and we include it for completeness. 
By the mass and energy conservation along with Gagliardo-Nirenberg inequality
	\begin{align*}
		(1-\delta)&M[Q]^{\frac{1-s}{s}}E[Q]\geq M[u_0]^{\frac{1-s}{s}}E[u_0] \\
		&\geq \frac{1}{2}\left(\|u(t)\|^{\frac{1-s}{s}}_{L^2}\|\nabla u(t)\|_{L^2}\right)^2 - \frac{1}{p+1}C_{GN}\left(\|u(t)\|^{\frac{1-s}{s}}_{L^2}\|\nabla u(t)\|_{L^2}\right)^{s(p-1)+2}.
\end{align*}
Using \eqref{poh1} and \eqref{poh2}, the above estimate becomes
\begin{align*}
		1-\delta \geq \frac{N}{2s}\left(\frac{\|u(t)\|^{\frac{1-s}{s}}_{L^2}\|\nabla u(t)\|_{L^2}}{\|Q\|^{\frac{1-s}{s}}_{L^2}\|\nabla Q\|_{L^2}}\right)^2 - \frac{2}{s(p-1)}\left(\frac{\|u(t)\|^{\frac{1-s}{s}}_{L^2}\|\nabla u(t)\|_{L^2}}{\|Q\|^{\frac{1-s}{s}}_{L^2}\|\nabla Q\|_{L^2}}\right)^{s(p-1)+2}.
\end{align*}
Define $f(x)=\frac{N}{2s}x^2-\frac{2}{s(p-1)}x^{s(p-1)+2}$. Since $s>0$, we always have $\text{deg}(f)>2$. Therefore,
\begin{align*}
f'(x)=\frac{N}{s}x-\frac{2(s(p-1)+2)}{s(p-1)}x^{s(p-1)+1}=\frac{N}{s}\left(1-x^{s(p-1)}\right)x,
\end{align*}
which implies that $f'(x)=0$ when $x_0=0$ and $x_1=1$. Observe that
\begin{align}\label{coer1}
	f(x)\leq 1-\delta <1=f(x_1).
\end{align}
If initially we have
$$
\|u_0\|^{\frac{1-s}{s}}_{L^2(\R^N)}\|\nabla u_0\|_{L^2(\R^N)}\leq \|Q\|^{\frac{1-s}{s}}_{L^2(\R^N)}\|\nabla Q\|_{L^2(\R^N)},
$$
then by \eqref{coer1} and the continuity of $\|\nabla u(t)\|_{L^2(\R^N)}$ in $t$, we conclude that
$$
\|u_0\|^{\frac{1-s}{s}}_{L^2(\R^N)} \|\nabla u(t)\|_{L^2(\R^N)}
\leq \|Q\|^{\frac{1-s}{s}}_{L^2(\R^N)} \|\nabla Q \|_{L^2(\R^N)},\,\,\,\text{i.e.,}\,\,\,x<x_1
$$
for all time $t\in I$. Furthermore, since the $L^2$-norm is conserved and recalling the $H^1$ blowup criterion implies that we have global existence and the $\dot{H}^1$-norm is uniformly bounded.
\end{proof}
Based on the result of Lemma \ref{Coercivity I}, we have that $u$ is global and uniformly bounded in $H^1$, moreover, there exists $\delta>0$ such that
\begin{align}\label{coercivity}
\sup_{t\in\R } \|u\|^{\frac{1-s}{s}}_{L^2(\R^N)} \|\nabla u \|_{L^2(\R^N)}<(1-\delta) \|Q \|^{\frac{1-s}{s}}_{L^2(\R^N)} \|\nabla Q\|_{L^2(\R^N)}.
\end{align}
To prove Theorem \ref{main1} we use a virial weight in a ball around the origin of sufficiently large radius together with the coercivity to obtain a suitable lower bound. First we need \eqref{coercivity} on balls of sufficiently large radii so that we can have a necessary coercivity. Define   $\chi_R(x)=\chi\left(\frac{x}{R}\right)$ for $R>0$,  where $\chi(x)\in C^{\infty}_c(\R^N)$ is a smooth cutoff function on $\{|x|\leq 1\}$ with $\chi(x)=1$ for $|x|\leq \frac{1}{2}$.

\begin{lemma}\label{coercivityball}
There exists $R_0=R_0(\delta,M(u),Q)>0$ sufficiently large so that
\begin{align}\label{cball1}
\sup_{t\in\R}\|\chi_Ru(t)\|^{1-s}_{L^2} \|\chi_Ru(t) \|_{\dot{H}^1}^s<(1-\delta) \|Q \|_{L^2}^{1-s}\|\nabla Q\|_{L^2}^s.
\end{align}	
In particular, there exists $\delta_1=\delta_1(\delta)>0$ so that
\begin{align}\label{cball2}
\|\chi_Ru(t)\|_{\dot{H}^1}^2-\frac{N(p-1)}{2(p+1)}\|\chi_Ru(t)\|_{L^{p+1}}^{p+1}\geq \delta_1 \|\chi_Ru(t)\|_{L^{p+1}}^{p+1}
\end{align}
uniformly for $t\in\R$.
\end{lemma}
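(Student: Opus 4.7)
The plan is to transfer the global coercivity \eqref{coercivity} to the localized estimate \eqref{cball1} by controlling the commutator of $\chi_R$ with $\nabla$, and then to extract the sharper Gagliardo--Nirenberg coercivity \eqref{cball2} by feeding \eqref{cball1} into the GN inequality and invoking the Pohozaev identities \eqref{poh1}--\eqref{poh2}.

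To prove \eqref{cball1}, I would expand $\nabla(\chi_R u) = \chi_R \nabla u + u \nabla \chi_R$ and, using $2\,\Re(\bar u \nabla u) = \nabla(|u|^2)$, integrate the cross term by parts. The key algebraic observation is that
$\int \chi_R \nabla\chi_R \cdot \nabla(|u|^2) = \tfrac{1}{2}\int \nabla(\chi_R^2)\cdot \nabla(|u|^2) = -\tfrac12 \int \Delta(\chi_R^2)|u|^2$,
whose $|\nabla\chi_R|^2|u|^2$ piece cancels the direct contribution of $|\nabla\chi_R|^2|u|^2$ appearing in $|\nabla(\chi_R u)|^2$, leaving the clean identity
\[
\|\chi_R u\|_{\dot{H}^1}^2 = \int \chi_R^2\,|\nabla u|^2\,dx - \int \chi_R\,\Delta\chi_R\,|u|^2\,dx.
\]
Since $\chi_R^2 \leq 1$ and $\|\Delta\chi_R\|_{L^\infty}\lesssim R^{-2}$, conservation of mass yields $\|\chi_R u(t)\|_{\dot{H}^1}^2 \leq \|\nabla u(t)\|_{L^2}^2 + CR^{-2} M[u_0]$ uniformly in $t$. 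Combined with the trivial $\|\chi_R u\|_{L^2}\leq \|u\|_{L^2}$ and the strict gap in \eqref{coercivity}, choosing $R_0 = R_0(\delta,M[u_0],Q)$ large enough absorbs the $O(R^{-2})$ error and delivers \eqref{cball1}.

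For \eqref{cball2}, apply Gagliardo--Nirenberg to $w := \chi_R u$; using $\tfrac{N(p-1)}{2} = s(p-1)+2$ and $2-\tfrac{(N-2)(p-1)}{2} = (1-s)(p-1)$,
\[
\|w\|_{L^{p+1}}^{p+1} \leq C_{GN}\,\|\nabla w\|_{L^2}^{2}\,\bigl(\|w\|_{L^2}^{1-s}\|\nabla w\|_{L^2}^{s}\bigr)^{p-1}.
\]
Raising \eqref{cball1} to the $(p-1)$-th power and invoking \eqref{poh1} in the form $\|Q\|_{L^2}^{(1-s)(p-1)}\|\nabla Q\|_{L^2}^{s(p-1)} = \tfrac{2(p+1)}{N(p-1)C_{GN}}$ gives $\tfrac{N(p-1)}{2(p+1)}\|w\|_{L^{p+1}}^{p+1} \leq (1-\delta)^{p-1}\|\nabla w\|_{L^2}^2$. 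Rearranging,
\[
\|\nabla w\|_{L^2}^2 - \tfrac{N(p-1)}{2(p+1)}\|w\|_{L^{p+1}}^{p+1} \geq \bigl(1-(1-\delta)^{p-1}\bigr)\|\nabla w\|_{L^2}^2,
\]
and reinserting the just-established lower bound $\|\nabla w\|_{L^2}^2 \geq \tfrac{N(p-1)}{2(p+1)(1-\delta)^{p-1}}\|w\|_{L^{p+1}}^{p+1}$ produces \eqref{cball2} with $\delta_1 = \tfrac{N(p-1)(1-(1-\delta)^{p-1})}{2(p+1)(1-\delta)^{p-1}} > 0$.

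The only subtle point is the commutator identity in the first step: a naive product rule leaves a first-order term $\int \chi_R\nabla\chi_R\cdot\Re(\bar u\nabla u)$ that can only be bounded by $\|\nabla u\|_{L^2}$ itself and therefore cannot be absorbed as $R \to \infty$. Rewriting it via $\nabla(|u|^2)$ and integration by parts into a zero-order quantity weighted by $\Delta\chi_R$, together with the cancellation of the $|\nabla\chi_R|^2|u|^2$ contribution, is what makes the cutoff error a genuine $O(R^{-2})$ correction controlled by the conserved mass uniformly in $t \in \R$.
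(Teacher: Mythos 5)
Your proof is correct and essentially mirrors the paper's argument: the commutator identity $\|\nabla(\chi_R u)\|^2_{L^2}=\int\chi_R^2|\nabla u|^2-\int\chi_R\,\Delta\chi_R\,|u|^2$ you establish by rewriting the cross term as $\tfrac12\nabla(\chi_R^2)\cdot\nabla(|u|^2)$ and integrating by parts is exactly the paper's identity \eqref{identity}, and absorbing the $O(R^{-2}M[u])$ error via conserved mass and the strict gap in \eqref{coercivity} is the same step used there to prove \eqref{cball1}. The only cosmetic difference is in deducing \eqref{cball2}: you feed \eqref{cball1} directly into Gagliardo--Nirenberg and rearrange, whereas the paper first rewrites $\|\nabla w\|_{L^2}^2-\tfrac{N(p-1)}{2(p+1)}\|w\|_{L^{p+1}}^{p+1}=\tfrac{N(p-1)}{2}E[w]-\tfrac{s(p-1)}{2}\|\nabla w\|_{L^2}^2$ and then bounds $E[w]$ from below; both yield a valid positive constant, yours being $\delta_1=\tfrac{N(p-1)\left(1-(1-\delta)^{p-1}\right)}{2(p+1)(1-\delta)^{p-1}}$ versus the paper's $\tfrac{N(p-1)}{2(p+1)}\tfrac{\delta}{1-\delta}$.
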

\begin{proof} For $u\in \dot{H^1}$, we write
$$
\|\nabla u \|_{L^2(\R^N)}^2-\frac{N(p-1)}{2(p+1)}
\|u\|_{L^{p+1}(\R^N)}^{p+1} =\frac{N(p-1)}{2}E[u]-\frac{s(p-1)}{2} \|\nabla u \|_{L^2(\R^N)}.
$$
By the Gagliardo-Nirenberg inequality and \eqref{poh1},
\begin{align*}
E[u]&\geq \frac{1}{2} \|\nabla u \|^2_{L^2(\R^N)}\left(1-\frac{2C_{GN}}{p+1} \left(\|u\|^{1-s}_{L^2(\R^N)} \|\nabla u \|^s_{L^2(\R^N)}\right)^{p-1}\right)\\
&\geq \frac{1}{2} \|\nabla u \|^2_{L^2(\R^N)}\left(1-\frac{2C_{GN}}{p+1}(1-\delta)\left(\|Q\|^{1-s}_{L^2(\R^N)} \|\nabla Q \|^s_{L^2(\R^N)}\right)^{p-1}\right)\\
&=\left(\frac{s}{N}+\frac{2\delta}{N(p-1)}\right) \|\nabla u \|^2_{L^2(\R^N)}.
\end{align*}
Therefore,
$$
\|\nabla u\|_{L^2(\R^N)}^2-\frac{N(p-1)}{2(p+1)} \|u \|_{L^{p+1}(\R^N)}^{p+1}\geq \delta \|\nabla u \|^2_{L^2(\R^N)},
$$
which implies that
$$
\|\nabla u\|_{L^2(\R^N)}^2-\frac{N(p-1)}{2(p+1)} \|u\|_{L^{p+1}(\R^N)}^{p+1}\geq \frac{N(p-1)}{2(p+1)}\,\frac{\delta}{1-\delta}\, \|u \|^{p+1}_{L^{p+1}(\R^N)},
$$
choosing $\displaystyle\delta_1=\frac{N(p-1)}{2(p+1)}\,\frac{\delta}{1-\delta}$ gives \eqref{cball2}. To finish the proof, we need to verify \eqref{cball1}. Observe that
$$
\|\chi_Ru(t)\|_{L^2_x}\leq \|u(t)\|_{L^2_x}
$$
uniformly for $t\in\R$. Therefore, it is sufficient to consider the $\dot{H}^1$ term. We compute
\begin{align*}
	\int |\nabla (\chi_R u)|^2\,dx&= \int\chi_R^2|\nabla u|^2\,dx + \int |u|^2\nabla\chi_R\nabla\chi_R+\int 2\Re(\bar{u}\nabla u)\chi_R\nabla\chi_R\\
	&=\int\chi_R^2|\nabla u|^2\,dx-\int|u|^2\chi_R\Delta\chi_R
\end{align*}
	and use the following identity
	\begin{align}\label{identity}
	\int\chi_R^2|\nabla u|^2\,dx = \int |\nabla (\chi_R u)|^2\,dx + \int\chi_R\Delta(\chi_R)|u|^2\,dx
	\end{align}
	with the definition of $\chi$ to write
	\begin{align}\label{cball3}
	\|\nabla(\chi_Ru)\|^2_{L^2}\leq \|\nabla u\|_{L^2}^2 + \mathcal{O}\left(\frac{1}{R^2}M[u]\right).
	\end{align}
	Choosing $R_0$ sufficiently large depending on $\delta,$ $M[u]$ and $Q$, we get that \eqref{cball3} holds for any $R>R_0$, and yields the desired estimate \eqref{cball1}.
\end{proof}

\subsection{The gHartree equation}\label{variational_gH} The Gagliardo-Nirenberg inequality of convolution type
\begin{equation}\label{gnc}
	P(v)\leq C_{GNC}\|\nabla v\|^{Np-(N+\gamma)}_{L^2(\R^N)}\|v\|^{N+\gamma-(N-2)p}_{L^2(\R^N)}
\end{equation}
has the sharp constant $C_{GNC}$, (see discussion in \cite{AKAR}, Section 4 and recall that $Q$ in gHartree may not be unique) with
\begin{align*}
	C_{GNC}=\frac{2p}{N(p-1)-\gamma}\left(\frac{N+\gamma-(N-2)p}{N(p-1)-\gamma}\right)^{\frac{N(p-1)-\gamma}{2}-1}\|Q\|_{L^2(\R^N)}^{-2(p-1)},
	\end{align*}
	where ground states $Q$ solve the equation \eqref{gHQeq}. From the value of sharp constant $C_{GNC}$ (and Pohozaev identities), we get
\begin{align}\label{poh1gH}
\|Q\|_{L^2(\R^N)}^{1-s}\|\nabla Q\|_{L^2}^s=\left(\frac{p\,(C_{GNC})^{-1}}{s(p-1)+1}\right)^{\frac{1}{2(p-1)}},
\end{align}
and
\begin{align}\label{poh2gH}
M[Q]^{1-s}E[Q]^s=\left(\frac{s(p-1)}{2s(p-1)+2}\right)^s\left(\|Q\|_{L^2(\R^N)}^{1-s}\|\nabla Q\|_{L^2(\R^N)}^s\right)^2.
\end{align} 
For the rest of this subsection, we assume $v_0\in H^1(\R^N)$ and $v(t)$ solves the gHartree equation \eqref{gH}.
\begin{lemma}\label{Coercivity IgH}
If $M[v_0]^{\frac{1-s}{s}}E[v_0]<(1-\delta)M[Q]^{\frac{1-s}{s}}E[Q]$ and $||v_0||^{\frac{1-s}{s}}_{L^2(\R^N)} \|\nabla v_0 \|_{L^2(\R^N)}\leq 
\|Q \|^{\frac{1-s}{s}}_{L^2(\R^N)} \|\nabla Q \|_{L^2(\R^N)}$, then there exists $\delta_1=\delta_1(\delta)>0$ so that
$$
\|v(t) \|^{\frac{1-s}{s}}_{L^2(\R^N)} \|\nabla v(t) \|_{L^2(\R^N)}<(1-\delta_1) \|Q\|^{\frac{1-s}{s}}_{L^2(\R^N)} \|\nabla Q \|_{L^2(\R^N)}
$$
for all $t\in I$, where $u\,:\,I\times\R^N\rightarrow\C$ is the maximal lifespan solution to \eqref{gH}. In particular, $I=\R$ and $u$ is uniformly bounded in $H^1(\R^N)$.
\end{lemma}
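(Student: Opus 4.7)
The approach is to mirror the NLS proof of Lemma \ref{Coercivity I}, replacing the standard Gagliardo--Nirenberg inequality by its convolution version \eqref{gnc} and the NLS Pohozaev identities \eqref{poh1}--\eqref{poh2} by \eqref{poh1gH}--\eqref{poh2gH}. First I would use conservation of mass and energy to rewrite the hypothesis as
$$(1-\delta)M[Q]^{\frac{1-s}{s}}E[Q] \geq M[v(t)]^{\frac{1-s}{s}}E[v(t)] = \tfrac{1}{2}y(t)^2 - \tfrac{1}{2p}M[v(t)]^{\frac{1-s}{s}}P(v(t)),$$
where $y(t) \defeq \|v(t)\|_{L^2}^{\frac{1-s}{s}}\|\nabla v(t)\|_{L^2}$. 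Applying \eqref{gnc} and exploiting the scaling identity $(N-2s)(p-1)=\gamma+2$ — which makes the powers of $M[v]$ and $\|\nabla v\|_{L^2}$ on the right combine into a pure power of $y$ — one gets $M[v]^{\frac{1-s}{s}}P(v) \leq C_{GNC}\,y^{2s(p-1)+2}$.

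Set $y_Q \defeq \|Q\|_{L^2}^{\frac{1-s}{s}}\|\nabla Q\|_{L^2}$. The identity \eqref{poh2gH} yields $M[Q]^{\frac{1-s}{s}}E[Q] = \frac{s(p-1)}{2(s(p-1)+1)}\,y_Q^2$, and \eqref{poh1gH} yields $y_Q^{2s(p-1)} = \frac{p\,(C_{GNC})^{-1}}{s(p-1)+1}$. Substituting these into the inequality above and dividing by $\frac{s(p-1)}{2(s(p-1)+1)}y_Q^2$, one finds, with $x(t) \defeq y(t)/y_Q$,
$$1-\delta \;\geq\; f(x(t)) \;\defeq\; \tfrac{s(p-1)+1}{s(p-1)}\,x(t)^2 \;-\; \tfrac{1}{s(p-1)}\,x(t)^{2s(p-1)+2}.$$
A direct computation gives $f(1)=1$ and $f'(x) = \tfrac{2(s(p-1)+1)}{s(p-1)}\,x\bigl(1-x^{2s(p-1)}\bigr)$, so $f$ is strictly increasing on $[0,1]$ and strictly decreasing on $[1,\infty)$, with unique maximum value $1$ attained at $x=1$.

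Now the hypothesis $x(0)\leq 1$, combined with continuity of $t\mapsto x(t)$ on the maximal interval $I$ (which follows from $H^1$ local well-posedness), prevents the trajectory from crossing $x=1$: continuity would otherwise force $f(x(t_\star))=1>1-\delta$ at the crossing time, contradicting our inequality. Hence $x(t)\leq 1$ on $I$, and the strict monotonicity of $f$ on $[0,1]$ with $f(x(t))\leq 1-\delta$ produces $\delta_1=\delta_1(\delta)>0$ with $x(t)\leq 1-\delta_1$ throughout $I$. Since $M[v(t)]=M[v_0]$ is conserved, this upgrades to a uniform $\dot{H}^1$ bound on $v(t)$, and the $H^1$ blowup criterion forces $I=\R$.

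The work is almost entirely algebraic; the only delicate point is to verify that after invoking \eqref{gnc}, \eqref{poh1gH} and \eqref{poh2gH} the resulting inequality really collapses to $f(x(t))\leq 1-\delta$ for the single function $f$ above. The key to that collapse is the scaling identity $(N-2s)(p-1)=\gamma+2$, which forces the $M[v]$-powers in the potential term to match the $M[v]$-powers produced by the kinetic term; once this matching is checked, the rest is a one-variable calculus exercise identical in spirit to the NLS case.
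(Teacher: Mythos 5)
Your proof is correct and follows essentially the same route as the paper's: conservation laws plus the convolution Gagliardo--Nirenberg inequality \eqref{gnc} and the Pohozaev relations \eqref{poh1gH}--\eqref{poh2gH} reduce the hypothesis to the scalar inequality $f(x(t))\leq 1-\delta$ with $f(x)=\frac{s(p-1)+1}{s(p-1)}x^2-\frac{1}{s(p-1)}x^{2s(p-1)+2}$, after which the continuity/bootstrap argument yields the uniform bound. Your explicit verification that the scaling identity $(N-2s)(p-1)=\gamma+2$ is what makes the mass and kinetic powers collapse into a single power of $y(t)$ is a worthwhile detail the paper leaves implicit.
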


\begin{proof}
By the mass and energy conservation along with the Gagliardo-Nirenberg inequality
\begin{align*}
(1-\delta)&M[Q]^{\frac{1-s}{s}}E[Q]\geq M[v_0]^{\frac{1-s}{s}}E[v_0] \\
&\geq \frac{1}{2}\left(\|v(t)\|^{\frac{1-s}{s}}_{L^2}\|\nabla v(t)\|_{L^2}\right)^2 - \frac{1}{2p}C_{GNC}\left(\|v(t)\|^{\frac{1-s}{s}}_{L^2}\|\nabla v(t)\|_{L^2}\right)^{2s(p-1)+2},
\end{align*}
Using \eqref{poh1gH} and \eqref{poh2gH}, the above estimate becomes
\begin{align*}
1-\delta \geq \frac{s(p-1)+1}{s(p-1)}\left(\frac{\|v(t)\|^{\frac{1-s}{s}}_{L^2} 
\|\nabla v(t) \|_{L^2}}{||Q||^{\frac{1-s}{s}}_{L^2} \|\nabla Q \|_{L^2}}\right)^2 - \frac{1}{s(p-1)}\left(\frac{\|v(t) \|^{\frac{1-s}{s}}_{L^2} \|\nabla v(t) \|_{L^2}}{\|Q \|^{\frac{1-s}{s}}_{L^2} \|\nabla Q \|_{L^2}}\right)^{2s(p-1)+2}.
\end{align*}
Define $f(x)=\frac{s(p-1)+1}{s(p-1)}x^2-\frac{1}{s(p-1)}x^{2s(p-1)+2}$, since $s>0$, thus, $\text{deg}(f)>2$. Therefore,
\begin{align*}
f'(x)=\frac{2s(p-1)+2}{s(p-1)}x-\frac{2s(p-1)+2}{s(p-1)}x^{2s(p-1)+1}=\frac{2s(p-1)+2}{s(p-1)}\left(1-x^{2s(p-1)}\right)x,
\end{align*}
which implies that $f'(x)=0$ when $x_0=0$ and $x_1=1$. Observe that
\begin{align}\label{coer1gH}
f(x)\leq 1-\delta <1=f(x_1).
\end{align}
If initially we have
$$
\|v_0||^{\frac{1-s}{s}}_{L^2(\R^N)} \|\nabla v_0 \|_{L^2(\R^N)}
\leq \|Q \|^{\frac{1-s}{s}}_{L^2(\R^N)} \|\nabla Q \|_{L^2(\R^N)},
$$
then by \eqref{coer1gH} and the continuity of $\|\nabla v(t) \|_{L^2(\R^N)}$ in $t$, we conclude that
$$
\|v_0 \|^{\frac{1-s}{s}}_{L^2(\R^N)} \|\nabla v(t) \|_{L^2(\R^N)} \leq 
\|Q\|^{\frac{1-s}{s}}_{L^2(\R^N)} \|\nabla Q \|_{L^2(\R^N)},\,\,\,\text{i.e.,}\,\,\,x<x_1
$$
for all time $t\in I$. Since the $L^2$ norm of the gradient is bounded, we get global existence, completing the proof.
\end{proof}

Now we prove the coercivity estimate on balls of large radii for gHartree. The following lemma differs from the standard approach as we lower bound the virial not by the potential term but by the $L^{\frac{2Np}{N+\gamma}}(\R^N)$ norm.
\begin{lemma}\label{coercivityballgH_new}
	There exists $R_0=R_0(\delta,M(v),Q)>0$ suffiiently large that for any $R>R_0$
	\begin{align}\label{gHcball1}
		\sup_{t\in\R}\|\chi_Rv(t)\|^{1-s}_{L^2}\|\chi_Rv(t)\|_{\dot{H}^1}^s<(1-\delta)\|Q\|_{L^2}^{1-s}\|\nabla Q\|_{L^2}^s.
	\end{align}	
	In particular, there exists $\tilde{\delta}=\tilde{\delta}(\delta)>0$ so that
	\begin{align}\label{gHcball2}
		\|\chi_Rv(t)\|_{\dot{H}^1}^2-\frac{s(p-1)+1}{p}P[\chi_Rv(t)]\geq \tilde{\delta}\,\|\chi_Rv(t)\|^2_{L^{\frac{2Np}{N+\gamma}}}
	\end{align}
	uniformly for $t\in\R$.
\end{lemma}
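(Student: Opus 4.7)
The plan is to mirror the NLS proof in Lemma \ref{coercivityball}, adapting the Gagliardo--Nirenberg step to the convolution form \eqref{gnc} and handling the extra $L^{\frac{2Np}{N+\gamma}}$ norm at the end.

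First, I would establish \eqref{gHcball1} by the same cutoff identity as in \eqref{identity}: writing
$$\int |\nabla(\chi_R v)|^2\,dx = \int \chi_R^2 |\nabla v|^2\,dx + \int \chi_R\,\Delta\chi_R\,|v|^2\,dx,$$
I get $\|\nabla(\chi_R v)\|_{L^2}^2 \leq \|\nabla v\|_{L^2}^2 + \mathcal{O}(R^{-2}\,M[v])$ since $\|\Delta \chi_R\|_{L^\infty} = \mathcal{O}(R^{-2})$. Combined with the trivial $\|\chi_R v\|_{L^2}\leq\|v\|_{L^2}$ and the uniform $\dot H^1$ bound from Lemma \ref{Coercivity IgH}, choosing $R_0=R_0(\delta,M[v],Q)$ sufficiently large yields \eqref{gHcball1}.

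For \eqref{gHcball2}, I apply the convolution Gagliardo--Nirenberg inequality \eqref{gnc} to $\chi_R v$. The scaling identities $Np-(N+\gamma)=2s(p-1)+2$ and $N+\gamma-(N-2)p=2(1-s)(p-1)$ (both following from the definition of $s$ in \eqref{E:scaling}) allow me to factor
$$P[\chi_R v] \leq C_{GNC}\,\|\nabla(\chi_R v)\|_{L^2}^{2}\,\Bigl(\|\chi_R v\|_{L^2}^{1-s}\,\|\nabla(\chi_R v)\|_{L^2}^{s}\Bigr)^{2(p-1)}.$$
Using \eqref{gHcball1} and the Pohozaev identity \eqref{poh1gH}, which yields $C_{GNC}\bigl(\|Q\|_{L^2}^{1-s}\|\nabla Q\|_{L^2}^{s}\bigr)^{2(p-1)}=\tfrac{p}{s(p-1)+1}$, I conclude
$$\tfrac{s(p-1)+1}{p}\,P[\chi_R v]\leq (1-\delta)^{2(p-1)}\,\|\nabla(\chi_R v)\|_{L^2}^{2},$$
so that $\|\nabla(\chi_R v)\|_{L^2}^{2}-\tfrac{s(p-1)+1}{p}P[\chi_R v]\geq \bar\delta\,\|\nabla(\chi_R v)\|_{L^2}^{2}$ with $\bar\delta=1-(1-\delta)^{2(p-1)}>0$.

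Finally, I convert the $\dot H^1$ bound into an $L^{\frac{2Np}{N+\gamma}}$ bound. Since $s<1$, i.e.\ $p<\frac{N+\gamma}{N-2}$, the exponent satisfies $\frac{2Np}{N+\gamma}<\frac{2N}{N-2}$. Because $\chi_R v$ is supported in $B_R$, Poincaré's inequality gives $\|\chi_R v\|_{L^2}\leq C R\,\|\nabla(\chi_R v)\|_{L^2}$, and then Sobolev--Gagliardo--Nirenberg interpolation produces
$$\|\chi_R v\|_{L^{2Np/(N+\gamma)}}^{2}\lesssim \|\chi_R v\|_{L^2}^{2(1-\theta)}\,\|\nabla(\chi_R v)\|_{L^2}^{2\theta}\leq C(R)\,\|\nabla(\chi_R v)\|_{L^2}^{2},$$
where $\theta=\tfrac{N(p-1)-\gamma}{2p}\in(0,1)$. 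Setting $\tilde\delta=\bar\delta/C(R)$ (absorbed into the $(\delta,M[v],Q)$ dependence of $R_0$) gives \eqref{gHcball2}.

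The step I expect to require the most care is the third one, since there is no direct Sobolev embedding $\dot H^1\hookrightarrow L^{\frac{2Np}{N+\gamma}}$ in the energy-subcritical range; one genuinely needs the compact support of $\chi_R v$ (via Poincaré) to close the exponent gap between the $\|\nabla(\chi_R v)\|_{L^2}^{2s}$ produced by interpolation and the $\|\nabla(\chi_R v)\|_{L^2}^{2}$ available from Step~2. Everything else is a direct transcription of the NLS argument in Lemma \ref{coercivityball}, with Gagliardo--Nirenberg replaced by its convolution analogue \eqref{gnc}.
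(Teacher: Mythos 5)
Your proposal is essentially correct and follows the same overall route as the paper, with one cosmetic difference in organization and one genuine point of added care in the last step.

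On organization: you apply the convolution Gagliardo--Nirenberg inequality \eqref{gnc} directly to $P[\chi_R v]$, factoring the exponents as $\|\nabla(\chi_R v)\|_{L^2}^2\bigl(\|\chi_R v\|_{L^2}^{1-s}\|\nabla(\chi_R v)\|_{L^2}^s\bigr)^{2(p-1)}$ and invoking \eqref{poh1gH} to replace $C_{GNC}(\|Q\|_{L^2}^{1-s}\|\nabla Q\|_{L^2}^s)^{2(p-1)}$ by $p/(s(p-1)+1)$. The paper instead writes the virial quantity as $(2s(p-1)+2)E[v]-s(p-1)\|\nabla v\|_{L^2}^2$ and lower-bounds $E[v]$ by Gagliardo--Nirenberg. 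The two are algebraically the same estimate rearranged; both yield the intermediate bound $\|\nabla(\chi_R v)\|_{L^2}^2-\frac{s(p-1)+1}{p}P[\chi_R v]\gtrsim_\delta\|\nabla(\chi_R v)\|_{L^2}^2$. (Minor: your constant $1-(1-\delta)^{2(p-1)}$ is the honest exponent; the paper writes $(1-\delta)$ in place of $(1-\delta)^{2(p-1)}$, which is harmless since $(1-\delta)^{2(p-1)}\le 1-\delta$ for $p\ge 3/2$.)

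On the last step: you are right to flag it. The paper simply asserts ``Sobolev embedding gives $\|v\|_{L^{2Np/(N+\gamma)}}\lesssim\|\nabla v\|_{L^2}$,'' but since $\tfrac{2Np}{N+\gamma}<\tfrac{2N}{N-2}$ is strictly subcritical there is no scale-invariant $\dot H^1\hookrightarrow L^{2Np/(N+\gamma)}$ embedding, and the interpolation inequality $\|v\|_{L^q}\lesssim\|v\|_{L^2}^{1-\theta}\|\nabla v\|_{L^2}^\theta$ with $\theta<1$ does not by itself convert a $\|\nabla(\chi_R v)\|_{L^2}^2$ lower bound into a $\|\chi_R v\|_{L^q}^2$ lower bound without an extra input. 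Your use of the compact support of $\chi_R v$ (Poincar\'e on $B_R$ followed by interpolation, or equivalently H\"older on $B_R$ followed by the critical Sobolev embedding) is exactly the missing ingredient, and it produces the bound with a constant $C(R)$ growing in $R$. This does mean that the $\tilde\delta$ you actually obtain depends on $R$, not just on $\delta$ as the lemma's statement literally asserts; your parenthetical about ``absorbing'' this into the $R_0$ dependence is a bit too quick (one would need to check, in Proposition \ref{morawetz_est_gH}, that the $R$-growth of $1/\tilde\delta$ is dominated by the decay $R^{-a}$ already present there). The paper's own proof glosses over this same point, so this is a shared looseness rather than a flaw specific to your argument. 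Your proof of \eqref{gHcball1} via the cutoff identity is identical to the paper's.
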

\begin{proof}
	We write
	$$
	\|\nabla v\|_{L^2(\R^N)}^2-\frac{s(p-1)+1}{p}P(v)=(2s(p-1)+2)E[v]-s(p-1)\|\nabla v\|^2_{L^2(\R^N)}.
	$$
	By Gagliardo-Nirenberg inequality and \eqref{poh1gH},
	\begin{align*}
		E[v]&\geq \frac{1}{2}\|\nabla v\|^2_{L^2(\R^N)}\left(1-\frac{C_{GNC}}{p}\left(\|v\|^{1-s}_{L^2(\R^N)}\|\nabla v\|^s_{L^2(\R^N)}\right)^{2(p-1)}\right)\\
		&\geq \frac{1}{2}\|\nabla v\|^2_{L^2(\R^N)}\left(1-\frac{C_{GNC}}{p}(1-\delta)\left(\|Q\|^{1-s}_{L^2(\R^N)}\|\nabla Q\|^s_{L^2(\R^N)}\right)^{2(p-1)}\right)\\
		&=\left(\frac{s(p-1)}{2s(p-1)+2}+\frac{\delta}{2s(p-1)+2}\right)\|\nabla v\|^2_{L^2(\R^N)}.
	\end{align*}
	Therefore,
	$$
	\|\nabla v\|_{L^2(\R^N)}^2-\frac{s(p-1)+1}{p}P(v)\geq \delta \|\nabla v\|^2_{L^2(\R^N)}.
	$$
	The assumption $p<\frac{N+\gamma}{N-2}$ (since $s<1$) implies that $\frac{2Np}{N+\gamma}<\frac{2N}{N-2}$ and thus, Sobolev embedding gives that $\|v\|_{L^\frac{2Np}{N+\gamma}}\lesssim \|\nabla v\|_{L^2}$. Hence, we obtain
	$$
	\|\nabla v\|_{L^2(\R^N)}^2-\frac{s(p-1)+1}{p}P(v)\geq \tilde{\delta}\, \|v\|^2_{L^{\frac{2Np}{N+\gamma}}}.
	$$
	To verify \eqref{gHcball1} we follow the similar argument as in Lemma \ref{coercivityball} for \eqref{cball1} which concludes the proof. 
\end{proof}

\section{Proof of Theorem \ref{main1}}\label{mainNLS}
Suppose $u$ is a solution of \eqref{NLS} satisfying Theorem \ref{main1}. 
Next, we recall the Morawetz identity.
\begin{lemma}[Morawetz identity, NLS]\label{morawetz_id}
	Let $a\,:\,\R^N\rightarrow\R$ be a smooth weight. Define
	$$
	\mathcal{M}(t)=2\Im\int\bar{u}\,\nabla u\cdot\nabla a\,dx.
	$$
	Then
	$$
	\frac{d}{dt}\mathcal{M}(t)=\int-\frac{2(p-1)}{p+1}|u|^{p+1}\Delta a +|u|^2(-\Delta\Delta a)+4\Re a_{jk}\bar{u}_ju_k\,dx,
	$$
	where subscripts denote partial derivatives and repeated indices are summed.
\end{lemma}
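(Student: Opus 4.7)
The identity is a standard virial-type algebraic computation that uses only the NLS equation and integration by parts; there are no analytic subtleties beyond assuming enough regularity on $u$ and on the weight $a$ to justify that boundary terms vanish.

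First I would differentiate under the integral sign to obtain
\begin{equation*}
\frac{d}{dt}\mathcal{M}(t) \;=\; 2\,\Im \int \bigl(\bar u_t\,\partial_j u + \bar u\,\partial_j u_t\bigr)\,a_j\,dx,
\end{equation*}
with summation in $j$ implicit. Substituting the evolution $u_t = i\Delta u + i|u|^{p-1}u$ (and its conjugate) and using $\Im(\bar z)=-\Im(z)$ cleanly splits $\frac{d\mathcal{M}}{dt}$ into a kinetic piece involving only Laplacians of $u$ and a potential piece involving only powers of $|u|$.

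For the potential piece, the two contributions combine via the product rule $\partial_j(|u|^{p-1}u)=u\,\partial_j|u|^{p-1}+|u|^{p-1}\partial_j u$; the terms of the form $|u|^{p-1}\Re(\bar u\,\partial_j u)$ cancel, leaving the real total-derivative expression
\begin{equation*}
|u|^2\,\partial_j|u|^{p-1} \;=\; \frac{p-1}{p+1}\,\partial_j |u|^{p+1}.
\end{equation*}
One integration by parts against $a_j$ then delivers exactly $-\frac{2(p-1)}{p+1}\int|u|^{p+1}\,\Delta a\,dx$.

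For the kinetic piece, I would use the rewriting $\bar u\,\partial_j\Delta u = \partial_j(\bar u\,\Delta u)-(\partial_j\bar u)\,\Delta u$ together with the pointwise equality $\Re(\Delta\bar u\,\partial_j u)=\Re((\partial_j\bar u)\,\Delta u)$ to collapse the cross terms; a first integration by parts moves $\partial_j$ off the total derivative, and a second, after writing $\Delta u=\partial_k\partial_k u$ and distributing one $\partial_k$ onto $a_j$, pulls out the Hessian contraction $4\,\Re(a_{jk}\bar u_j u_k)$. Transferring the remaining two derivatives from $|u|^2$ onto $a$ (using $2\Re(\bar u\,\Delta u)=\Delta|u|^2-2|\nabla u|^2$) produces the bi-Laplacian term $-|u|^2\,\Delta\Delta a$, with stray $|\nabla u|^2\,\Delta a$ contributions cancelling against the trace part of the Hessian contraction.

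The main obstacle is purely bookkeeping: carefully tracking signs and combinatorial coefficients through the double integration by parts in the kinetic piece to confirm that the Hessian term appears with coefficient exactly $4$ and the bi-Laplacian term with coefficient exactly $-1$. Since in applications $a$ is smooth with bounded derivatives up to order four (e.g.\ a truncated quadratic weight at scale $R$) and $u\in H^1$, all boundary terms vanish and every integral in the derivation converges, so the identity is rigorous.
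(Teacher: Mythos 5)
The paper states Lemma \ref{morawetz_id} without proof, treating it as the standard Morawetz/virial identity for NLS; there is therefore no paper proof to compare against line by line. Your outline is the correct standard derivation and it does work: the split into kinetic and potential contributions, the cancellation of the $|u|^{p-1}\Re(\bar u\,\partial_j u)$ cross terms and the reduction $|u|^2\partial_j|u|^{p-1}=\tfrac{p-1}{p+1}\partial_j|u|^{p+1}$ in the potential part, and the double integration by parts producing $4\Re a_{jk}\bar u_j u_k$ plus a $\partial_j|\nabla u|^2\,a_j$ piece in the kinetic part all check out, with the correct coefficients.

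One small descriptive imprecision to flag: the two "stray" $|\nabla u|^2\,\Delta a$ contributions do not cancel against any trace part of the Hessian contraction (which survives intact as $4\Re a_{jk}\bar u_j u_k$). Rather, they cancel among themselves: the first integration-by-parts chain gives $-2\int|\nabla u|^2\Delta a\,dx$ (from $\Re(\partial_k\bar u\,\partial_k\partial_j u)=\tfrac12\partial_j|\nabla u|^2$ followed by one more integration by parts against $a_j$), and the identity $2\Re(\bar u\,\Delta u)=\Delta|u|^2-2|\nabla u|^2$ applied to the $-2\Re\int\bar u\Delta u\,\Delta a\,dx$ term gives $+2\int|\nabla u|^2\Delta a\,dx$, which exactly offsets it, leaving only $-\int\Delta|u|^2\,\Delta a\,dx=-\int|u|^2\,\Delta\Delta a\,dx$. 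With that correction to the narrative, the computation is rigorous and complete.
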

We take $a$ to be a radial function satisfying
$$
a(x)=\begin{cases}
|x|^2\quad &|x|<\frac{R}{2}\\
R|x|\quad &|x|>R.
\end{cases}
$$
In the intermediate region $\frac{R}{2}<|x|\leq R$, we impose that
$$
\partial_ra> 0,\quad \partial^2_ra\geq 0,\quad |\partial^{\alpha}a(x)|\lesssim_{\alpha} R|x|^{-\alpha +1}\quad \text{for}\,\,|\alpha|\geq 1.
$$
Here, $\partial_r$ denotes the radial derivative, i.e., $\partial_ra=\nabla a\cdot \frac{x}{|x|}.$ Under these conditions, the matrix $a_{jk}$ is non-negative. Note that for $|x|\leq \frac{R}{2}$, we have
$$
a_{jk}=2\delta_{jk},\quad\Delta a=2N,\quad \Delta^2a=0,
$$
while for $|x|>R$, we have
$$
a_{jk}=\frac{R}{|x|}\left[\delta_{jk}-\frac{x_j}{|x|}\frac{x_k}{|x|}\right],\quad \Delta a=\frac{(N-1)R}{|x|},\quad \Delta^2a=0.
$$

\begin{proposition}[Morawetz estimate, NLS]\label{morawetz_est}
Let $T>0$. For $R\equiv R(\delta,M(u),Q)$ sufficiently large,
$$
\frac{1}{T}\int_{0}^{T}\int_{|x|\leq R}^{}|u(x,t)|^{p+1}\,dx\,dt\lesssim_{u,\delta}\begin{cases}
\frac{R}{T}+\frac{1}{R^{\frac{(N-1)(p-1)}{2}}},\quad &\text{if}\,\,s<\frac{1}{2}\\
\frac{R}{T}+\frac{1}{R^2},\quad &\text{if}\,\, s\geq \frac{1}{2}
\end{cases}.
$$
\end{proposition}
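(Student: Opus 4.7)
The plan is to pair the Morawetz identity from Lemma \ref{morawetz_id} with the uniform $H^1$ bound \eqref{coercivity} and the localized coercivity of Lemma \ref{coercivityball}, following the Dodson--Murphy scheme. Since $|\nabla a(x)|\lesssim R$ uniformly in $x$ (growing linearly in $|x|$ in the inner region and constant in the outer), Cauchy--Schwarz yields $\sup_{t\in[0,T]}|\mathcal{M}(t)|\lesssim_u R$, so the fundamental theorem of calculus gives $\left|\int_0^T\frac{d}{dt}\mathcal{M}(t)\,dt\right|\lesssim_u R$.

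To extract the desired potential integral from this upper bound, I decompose the Morawetz integrand across the three regions $\{|x|\le R/2\}$, $\{R/2<|x|\le R\}$, and $\{|x|>R\}$. On the inner ball, $a_{jk}=2\delta_{jk}$, $\Delta a=2N$, $\Delta^2a=0$, so the integrand reduces to $8|\nabla u|^2-\tfrac{4N(p-1)}{p+1}|u|^{p+1}$, which equals eight times the combination appearing in the coercivity estimate \eqref{cball2}. On the transition annulus, I retain the term $4\Re a_{jk}\bar u_j u_k\ge 0$ (positivity follows from the convexity conditions $\partial_r a>0$ and $\partial_r^2 a\ge 0$) and only bound the nonlinear and bilaplacian contributions; on the exterior, $\Delta^2 a=0$ and the matrix contribution is again nonnegative (and in fact vanishes for radial $u$, since $a_{jk}\bar u_j u_k=(\partial_r^2 a)|\partial_r u|^2$), so only the nonlinear term contributes to a lower bound.

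The two resulting error terms I need to control are: (i) the bilaplacian contribution $\int |u|^2|\Delta^2 a|\,dx$, which, since $|\Delta^2a|\lesssim R^{-2}$ on its support $\{R/2<|x|\le R\}$, is bounded by $O(R^{-2}M[u])$; and (ii) the outer nonlinear contribution $\int_{|x|>R/2}|u|^{p+1}|\Delta a|\,dx$, which I estimate by the radial Sobolev inequality of Lemma \ref{radsob}: for $|x|\ge R/2$, one has $|u(x,t)|^{p-1}\lesssim |x|^{-(N-1)(p-1)/2}\|u\|_{L^2}^{(p-1)/2}\|\nabla u\|_{L^2}^{(p-1)/2}\lesssim_u R^{-(N-1)(p-1)/2}$, and since $|\Delta a|\lesssim 1$ on this region and $\int|u|^2\le M[u]$ by mass conservation, the total is $\lesssim_u R^{-(N-1)(p-1)/2}$. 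To deploy coercivity, I apply the identity \eqref{identity} to rewrite $\int_{|x|\le R/2}|\nabla u|^2$ in terms of $\|\nabla(\chi_R u)\|_{L^2}^2$, modulo an $O(R^{-2}M[u])$ error from $\int\chi_R\Delta\chi_R|u|^2$ and transition-annulus leakage that must be shown either to carry a favorable sign or to be absorbed by the nonnegative matrix term on the same annulus.

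Assembling these pieces and integrating over $[0,T]$ yields $\int_0^T\|\chi_R u(t)\|_{L^{p+1}}^{p+1}\,dt\lesssim_{u,\delta} R + T\,R^{-(N-1)(p-1)/2} + T\,R^{-2}$. Dividing by $T$, using $\|\chi_R u\|_{L^{p+1}}^{p+1}\ge \int_{|x|\le R/2}|u|^{p+1}\,dx$, and rescaling $R\mapsto 2R$ to enlarge the integration region to $\{|x|\le R\}$ gives the claim. The case distinction $s<1/2$ versus $s\ge 1/2$ is purely cosmetic: a direct computation from \eqref{E:scaling} shows $\tfrac{(N-1)(p-1)}{2}<2\iff s<1/2$, so for $s<1/2$ the $R^{-2}$ error is dominated by $R^{-(N-1)(p-1)/2}$ (since $R\gg 1$) and conversely for $s\ge 1/2$, which is why the statement records only the larger of the two. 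I expect the chief technical obstacle to be precisely the bookkeeping in the inner region described above, namely showing that the correction terms produced by localizing $u$ to $\chi_R u$ (both the $R^{-2}M[u]$ Laplacian correction and the transition-annulus gradient leakage) can be cleanly absorbed either into the $R^{-2}$ error or into the positive matrix contribution $\int 4\Re a_{jk}\bar u_j u_k$ arising on the same ring from $\partial_r^2 a\ge 0$.
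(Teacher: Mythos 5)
Your proposal is correct and follows essentially the same route as the paper's own proof: the same three-region decomposition of the Morawetz weight, the same application of the identity \eqref{identity} to introduce the cutoff, the same use of Lemma \ref{coercivityball} on the inner ball, the same radial Sobolev bound on the outer potential term, and the same observation that the case split at $s=1/2$ is just which of $R^{-2}$ and $R^{-(N-1)(p-1)/2}$ dominates. The only cosmetic difference is that you explicitly mention the harmless relabeling $R\mapsto 2R$ to pass from $\{|x|\le R/2\}$ to $\{|x|\le R\}$, which the paper suppresses.
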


\begin{proof}
Note that by Cauchy-Schwarz, the uniform $H^1$ bound for $u$ and the choice of the weight $a(x)$, we have that
$
\sup_{t\in\R}|\mathcal{M}(t)|\lesssim_u R.
$
We compute
\begin{align*}
\frac{d}{dt}M(t)=&\,\,8\int_{|x|\leq \frac{R}{2}}^{}|\nabla u|^2-\frac{N(p-1)}{2(p+1)}|u|^{p+1}\,dx\\
&+\int_{|x|>R}^{}-\frac{2(N-1)(p-1)}{p+1}\frac{R}{|x|}|u|^{p+1}+\frac{4R}{|x|}|\blacktriangledown  u|^2\,dx\\
&+\int_{\frac{R}{2}<|x|\leq R}^{}4\Re a_{jk}\bar{u}_ju_k-\frac{2(p-1)}{p+1}\frac{R}{|x|}|u|^{p+1}-\frac{R}{|x|^3}|u|^2\,dx,
\end{align*}
where $\blacktriangledown$ denotes the angular part of the derivative. In fact, since $u$ is radial, this term is zero. We define $\chi_R:=\chi\left(\frac{x}{R}\right)$ for $R>0$ and write
\begin{align*}
\int_{|x|\leq \frac{R}{2} }^{}\chi_R^2|\nabla u|^2=\int_{|x|\leq\frac{R}{2}}^{}|\nabla(\chi_R u)|^2+\int_{|x|\leq \frac{R}{2}}^{}\chi_R\Delta(\chi_R) |u|^2.
\end{align*}
Then the Morawetz identity can be estimated as
\begin{align}\notag
\frac{d}{dt}\mathcal{M}(t)\,\,\geq&\,\, 8\int_{|x|\leq \frac{R}{2}}^{}|\nabla(\chi_R u)|^2- \frac{4N(p-1)}{p+1}\int_{|x|\leq \frac{R}{2}}^{}|\chi_Ru|^{p+1} -c_1\int_{|x|>\frac{R}{2}}^{}\frac{R}{|x|}|u|^{p+1}\\\label{NLSmor1}
    	&+ 8\int_{|x|\leq \frac{R}{2}}^{}\chi_R\Delta(\chi_R) |u|^2-\int_{\frac{R}{2}<|x|\leq R}^{}\frac{R}{|x|^3}|u|^2\,dx\\\label{NLSmor2}
    	\geq& \,\,8\int\delta_1|\chi_Ru(x,t)|^{p+1}\,dx-c_1\int_{|x|>\frac{R}{2}}^{}|u(x,t)|^{p+1}-\frac{c_2}{R^2}M(u),
\end{align}
where in the inequality \eqref{NLSmor2} we have used  Lemma \ref{coercivityball} and the fact that for a fixed radius $R$ and mass $M(u)$ the terms in \eqref{NLSmor1} is a constant multiple of $\frac{1}{R^2}$ and  $M(u)$. Next we apply the fundamental theorem of calculus on the interval $[0,T]$ and rearrange terms to obtain
\begin{align*}
\int_{0}^{T}\int8\delta_1|\chi_Ru(x,t)|^{p+1}&\,dx\,dt\lesssim\sup_{t\in[0,T]}|\mathcal{M}(t)|+\int_{0}^{T}\int_{|x|>R}^{}|u(x,t)|^{p+1}\,dx\,dt+\frac{T}{R^2}M(u).	
\end{align*}
By Lemma \ref{radsob} (radial Sobolev embedding), we have
\begin{align*}
\int_{|x|>R}^{}|u(x,t)|^{p+1}\,dx&\lesssim\frac{1}{R^{\frac{(N-1)(p-1)}{2}}}\int|x|^{\frac{(N-1)(p-1)}{2}}|u(x,t)|^{p-1}\,|u(x,t)|^2\,dx\\
	&\lesssim\frac{1}{R^{\frac{(N-1)(p-1)}{2}}}	||u||^{p-1}_{L_t^{\infty}\dot{H}^1_x}M(u)\\
	&\lesssim\frac{1}{R^{\frac{(N-1)(p-1)}{2}}}.
\end{align*}
Therefore, we deduce that
$$
\frac{1}{T}\int_{0}^{T}\int_{|x|\leq R}^{}|u(x,t)|^{p+1}\,dx\,dt\lesssim_{u,\delta}\frac{R}{T}+\frac{1}{R^{\frac{(N-1)(p-1)}{2}}}+\frac{1}{R^2}.
$$
Observe that
$$
\begin{cases}
\frac{(N-1)(p-1)}{2}\leq 2 \quad & \text{for}\,\,s\leq\frac{1}{2},\\
\frac{(N-1)(p-1)}{2}>2 \quad & \text{for}\,\,s>\frac{1}{2}.
\end{cases}
$$
Thus,
$$
\frac{1}{T}\int_{0}^{T}\int_{|x|\leq R}^{}|u(x,t)|^{p+1}\,dx\,dt\lesssim_{u,\delta}\begin{cases}
\frac{R}{T}+\frac{1}{R^{\frac{(N-1)(p-1)}{2}}},\quad &\text{if}\,\,s<\frac{1}{2}\\
\frac{R}{T}+\frac{1}{R^2},\quad &\text{if}\,\, s\geq \frac{1}{2}
\end{cases},
$$
as desired.
\end{proof}

Now we prove that the potential energy of $u$ escapes to spatial infinity as $t\rightarrow\infty$.

\begin{proposition}[Energy evacuation, NLS]\label{energyevac}
There exists a sequence of times $t_n\rightarrow\infty$ and a sequence of radii $R_n\rightarrow\infty$ such that
$$
\lim\limits_{n\rightarrow\infty}\int_{|x|\leq R_n}^{}
|u(x,t_n)|^{p+1}\,dx=0.	$$
\end{proposition}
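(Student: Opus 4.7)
The strategy is to combine the Morawetz estimate of Proposition \ref{morawetz_est} with a mean-value/pigeonhole argument on successively longer time intervals. Writing $\kappa = \min\{(N-1)(p-1)/2,\,2\}$, the Morawetz bound asserts that for every $T>0$ and every $R$ larger than the threshold $R_0=R_0(\delta, M[u], Q)$,
$$
\frac{1}{T}\int_{0}^{T}\int_{|x|\leq R}|u(x,t)|^{p+1}\,dx\,dt \;\lesssim_{u,\delta}\; \frac{R}{T}+\frac{1}{R^{\kappa}}.
$$
Any pairing $(T_n,R_n)$ with $T_n\to\infty$, $R_n\to\infty$, and $R_n/T_n\to 0$ therefore forces the time-averaged local potential energy to vanish.

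Concretely, I would take $T_n=n$ and $R_n=n^{1/2}$, so that eventually $R_n>R_0$ and $R_n/T_n=n^{-1/2}\to 0$, while also $R_n^{-\kappa}\to 0$. To guarantee that the time we extract escapes to infinity (rather than accumulating near $t=0$), I would restrict attention to the upper half of the interval: since the integrand is non-negative,
$$
\frac{2}{T_n}\int_{T_n/2}^{T_n}\int_{|x|\leq R_n}|u(x,t)|^{p+1}\,dx\,dt \;\leq\; \frac{2}{T_n}\int_{0}^{T_n}\int_{|x|\leq R_n}|u(x,t)|^{p+1}\,dx\,dt \;\longrightarrow\; 0.
$$

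Finally, applying the mean value theorem for integrals on $[T_n/2,T_n]$ produces $t_n\in[T_n/2,T_n]$ for which $\int_{|x|\leq R_n}|u(x,t_n)|^{p+1}\,dx$ is no larger than the time average displayed above, hence tends to $0$. Since $t_n\geq T_n/2\to\infty$ and $R_n\to\infty$ by construction, the pair $(t_n,R_n)$ has the required properties. There is no genuine obstacle here beyond Proposition \ref{morawetz_est}; the only subtlety is the choice of pigeonholing window, which is needed precisely to force $t_n\to\infty$.
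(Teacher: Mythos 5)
Your proposal is correct and follows essentially the same approach as the paper: combine the Morawetz estimate of Proposition \ref{morawetz_est} with a pigeonhole/mean-value argument on long time intervals. The one place you are more explicit than the paper is in restricting to the upper half $[T_n/2,T_n]$ to guarantee $t_n\to\infty$; the paper invokes the mean value theorem somewhat tersely without spelling this out. (You also use a fixed $R_n = T_n^{1/2}$ rather than the paper's balanced choice $R = T^{2/(N(p-1)-(p-3))}$ or $R=T^{1/3}$, and you let the ball be independent of the integration variable $t$ rather than using the shrinking region $|x|\leq t^{\cdot}$ that appears in the paper's displays; both are legitimate, and yours is a touch cleaner since the Morawetz estimate is stated for a fixed $R$.)
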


\begin{proof}
For $T>0$ sufficiently large with $R=T^{\frac{2}{N(p-1)-(p-3)}}$ for $s<\frac{1}{2}$ and $R=T^{1/3}$ for $s\geq \frac{1}{2}$ (note that $R>R_0$) and Proposition \ref{morawetz_est}, we obtain for $s<\frac{1}{2}$ that
$$
\frac{1}{T}\int_{0}^{T}\int_{|x|\leq t^{\frac{2}{N(p-1)-(p-3)}}}^{}|u(x,t)|^{p+1}\,dx\,dt\lesssim T^{-\frac{(N-1)(p-1)}{N(p-1)-(p-3)}}
$$
and
$$
\frac{1}{T}\int_{0}^{T}\int_{|x|\leq t^{1/3}}^{}|u(x,t)|^{p+1}\,dx\,dt\lesssim T^{-2/3}
$$
for $s\geq \frac{1}{2}$. Since $u$ is global and $\|u\|_{L^{p+1}}$ is bounded, by mean value theorem there exist sequences $t_n\rightarrow\infty$ and $R_n\rightarrow\infty$ such that the energy evacuation happens on a sequence of balls as desired, i.e.,
	$$
	\lim\limits_{n\rightarrow\infty}\int_{|x|\leq R_n}^{}
	|u(x,t_n)|^{p+1}\,dx=0.
	$$
\end{proof}
\begin{proof}[Proof of Theorem \ref{main1}]
	We have (by Section \ref{variational}) that $u$ is global and uniformly bounded in $H^1$. Choose $\varepsilon$ and $R$ as in Lemma \ref{Scat_crit} with $t_n\rightarrow\infty$ and $R_n\rightarrow\infty$ as in Proposition \ref{energyevac}. Now taking $n$ large so that $R_n\geq R$, H\"older's inequality gives
	$$
	\int_{|x|\leq R}^{}|u(x,t_n)|^2\,dx\lesssim R^{\frac{N(p-1)}{p+1}}\left(\int_{|x|\leq R_n}^{}|u(x,t_n)|^{p+1}\,dx\right)^\frac{2}{p+1}\rightarrow 0 \quad \text{as}\quad n\rightarrow \infty.
	$$
	Therefore, Lemma \ref{Scat_crit} implies that $u$ scatters in $H^1(\R^N)$ forward in time.
\end{proof}
\section{Proof of Theorem \ref{main2}}\label{maingH}
We begin this section with the Morawetz identity in the gHartree case, which will be used to obtain Morawetz estimate.
\begin{lemma}[Morawetz identity, gH]\label{morawetz_idgH}
	Let $a\,:\,\R^N\rightarrow\R$ be the same smooth weight as described in Section \ref{mainNLS}. Define
	$$
	\mathcal{M}_{gH}(t)=2\Im\int\bar{v}\,\nabla v\cdot\nabla a\,dx.
	$$
	Then
	\begin{align*}
	\frac{d}{dt}\mathcal{M}_{gH}(t)=&\int-4\left(\frac{1}{2}-\frac{1}{p}\right)\left(|x|^{-(N-\gamma)}\ast|v|^p\right)|v|^p\Delta a +|v|^2(-\Delta\Delta a)+4\Re a_{jk}\bar{v}_jv_k\,dx\\
	&	-\frac{4(N-\gamma)}{p}\int\int\nabla a\frac{(x-y)|v(x)|^p|v(y)|^p}{|x-y|^{N-\gamma+2}}\,dxdy,
	\end{align*}
	where subscripts denote partial derivatives and repeated indices are summed.
\end{lemma}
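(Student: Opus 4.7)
Write $\mathcal{M}_{gH}(t) = 2\int \partial_k a\,j_k(v)\,dx$ with momentum density $j_k(v) = \Im(\bar v\,\partial_k v)$, differentiate in $t$, and substitute from the equation $iv_t = -\Delta v - N(v)$ with $N(v) = W|v|^{p-2}v$ and $W(x) = (|x|^{-(N-\gamma)}\ast|v|^p)(x)$. A direct computation produces the decomposition
$$
\partial_t j_k \;=\; \Re\bigl(\bar v\,\partial_k \Delta v - \Delta \bar v\,\partial_k v\bigr) \;+\; \Re\bigl(\bar v\,\partial_k N - \bar N\,\partial_k v\bigr),
$$
splitting $\frac{d}{dt}\mathcal{M}_{gH}$ into a linear piece (from the Laplacian) and a nonlinear piece (from $N$). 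The linear piece is handled exactly as in Lemma \ref{morawetz_id}: two integrations by parts, using $\sum_j(\bar v\,\partial_k\partial_j^2 v-\partial_j^2\bar v\,\partial_k v)=\sum_j\partial_j(\bar v\,\partial_j\partial_k v-\partial_j\bar v\,\partial_k v)$, yield $\int [-|v|^2\,\Delta\Delta a + 4\Re(a_{jk}\bar v_j v_k)]\,dx$. So only the nonlinear piece $\mathcal{I}:= 2\int \partial_k a\,\Re(\bar v\,\partial_k N - \bar N\,\partial_k v)\,dx$ requires genuinely new work.

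\textbf{Main step.} Applying the product rule to $N = W|v|^{p-2}v$, together with the pointwise identity $|v|^{p-2}\Re(\bar v\,\partial_k v) = \frac{1}{p}\partial_k|v|^p$ (which follows from $\partial_k|v|^p=p|v|^{p-2}\Re(\bar v\,\partial_k v)$), gives
$$
\Re(\bar v\,\partial_k N - \bar N\,\partial_k v) \;=\; |v|^p\,\partial_k W \;+\; \tfrac{p-2}{p}\,W\,\partial_k|v|^p.
$$
A single integration by parts trades $W\,\partial_k|v|^p$ tested against $\partial_k a$ for $-|v|^p(\nabla W\cdot\nabla a + W\,\Delta a)$, and after collecting coefficients one finds
$$
\mathcal{I} \;=\; \tfrac{4}{p}\int |v|^p\,\nabla a\cdot\nabla W\,dx \;-\; 4\bigl(\tfrac{1}{2}-\tfrac{1}{p}\bigr)\int W|v|^p\,\Delta a\,dx.
$$
The second integral is already exactly the first nonlinear term in the statement. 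For the first integral, differentiate $W$ under the integral sign using Remark \ref{reisz}, namely
$\nabla_x W(x) = -(N-\gamma)\int (x-y)|v(y)|^p\,|x-y|^{-(N-\gamma)-2}\,dy$,
to rewrite it as the double integral with coefficient $-\frac{4(N-\gamma)}{p}$ appearing in the claim.

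\textbf{Main obstacle.} The only subtle point is to produce precisely the two displayed pieces of the nonlinear contribution with the correct coefficients $-4(\frac{1}{2}-\frac{1}{p})$ and $-\frac{4(N-\gamma)}{p}$; this depends on performing exactly one integration by parts on the $W\,\partial_k|v|^p$ term before differentiating the Riesz kernel, which is what generates the $-W|v|^p\,\Delta a$ contribution alongside a residual $\nabla W\cdot\nabla a$ piece that combines with the $|v|^p\,\partial_k W$ term from the product rule to give total coefficient $\frac{4}{p}$. Under the standing $H^1$ regularity of $v$ and the controlled weight $a$ (with $|\partial^\alpha a|$ bounded on $|x|\leq R/2$ and decaying like $R|x|^{-|\alpha|+1}$ elsewhere), every integration by parts and the interchange of $\nabla_x$ with the convolution integral are justified, and the remaining manipulations are routine.
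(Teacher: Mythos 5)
Your computation is correct. The paper states this Morawetz identity without proof, treating it as a standard calculation, so there is no in-text argument to compare against; but what you write is the usual direct derivation and all the coefficients check out. In particular, the split $\partial_t j_k = \Re(\bar v\,\partial_k\Delta v - \Delta\bar v\,\partial_k v) + \Re(\bar v\,\partial_k N - \bar N\,\partial_k v)$ with $N = W|v|^{p-2}v$ is right; the pointwise identity $\Re\bigl(\bar v\,\partial_k(|v|^{p-2}v) - |v|^{p-2}\bar v\,\partial_k v\bigr) = (p-2)|v|^{p-2}\Re(\bar v\,\partial_k v) = \tfrac{p-2}{p}\partial_k|v|^p$ holds; the single integration by parts gives $\int \partial_k a\,W\,\partial_k|v|^p = -\int (\Delta a\,W + \nabla a\cdot\nabla W)|v|^p$, and collecting terms does yield the coefficients $\tfrac{4}{p}$ on $\int |v|^p\nabla a\cdot\nabla W$ and $-\tfrac{2(p-2)}{p} = -4(\tfrac12-\tfrac1p)$ on $\int W|v|^p\Delta a$; finally, $\nabla_x W(x) = -(N-\gamma)\int (x-y)|x-y|^{-(N-\gamma+2)}|v(y)|^p\,dy$ converts the first of these into the double integral with coefficient $-\tfrac{4(N-\gamma)}{p}$, exactly as stated.
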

\begin{proposition}[Morawetz estimate, gH]\label{morawetz_est_gH}
	Let $T>0$. For $R\equiv R(\delta,M(v),Q)>0$ sufficiently large,
	$$
	\frac{1}{T}\int_{0}^{T}\left(\int_{|x|\leq R}^{}|v(x,t)|^{\frac{2Np}{N+\gamma}}\,dx\right)^{\frac{N+\gamma}{Np}}\,dt\lesssim_{v,\delta}\begin{cases}
	\frac{R}{T}+\frac{1}{R^{\frac{(N-1)(N(p-1)-\gamma)}{N}}}\quad &\text{if}\,\,s<\frac{1}{2}\\
	\frac{R}{T}+\frac{1}{R^2}\quad &\text{if}\,\, s\geq \frac{1}{2}.
	\end{cases}
	$$
\end{proposition}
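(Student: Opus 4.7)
The plan is to adapt the NLS Morawetz argument (Proposition \ref{morawetz_est}) to the nonlocal setting, with the crucial modification that the inner-region virial will be bounded below via Lemma \ref{coercivityballgH_new} by the $L^{2Np/(N+\gamma)}$-norm of the localized solution (rather than by the convolution potential $P(v)$ itself, which is nonlocal and harder to localize). First, from the definition of $\mathcal{M}_{gH}(t)$ together with Cauchy--Schwarz, the support properties of $\nabla a$ (it vanishes outside $\{|x|\lesssim R\}$ and $|\nabla a|\lesssim R$ throughout), and the uniform $H^1$ bound on $v$, one obtains $\sup_t|\mathcal{M}_{gH}(t)|\lesssim_v R$, which after applying the fundamental theorem of calculus on $[0,T]$ will produce the $R/T$ term on the right.

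Next, I would decompose the output of Lemma \ref{morawetz_idgH} according to the three regions $|x|\le R/2$, $R/2<|x|\le R$, and $|x|>R$ dictated by the weight $a$. On the inner ball $|x|\le R/2$ we have $a_{jk}=2\delta_{jk}$, $\Delta a=2N$ and $\Delta^2 a=0$, and the contribution $4\Re a_{jk}\bar v_j v_k=8|\nabla v|^2$ pairs with the localized nonlocal term coming from $-4(\tfrac12-\tfrac1p)\Delta a\,(|x|^{-(N-\gamma)}\ast|v|^p)|v|^p$ and the corresponding restriction of the double integral to form, up to errors supported in $\{|x|>R/2\}$, the virial functional $\|\nabla(\chi_R v)\|_{L^2}^2-\frac{s(p-1)+1}{p}P[\chi_R v]$; here I would use the identity \eqref{identity} to pass from $\int \chi_R^2|\nabla v|^2$ to $\|\nabla(\chi_R v)\|_{L^2}^2$ at the cost of an $O(R^{-2})M[v]$ term. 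Applying Lemma \ref{coercivityballgH_new} then yields the lower bound $\tilde\delta\,\|\chi_R v(t)\|_{L^{2Np/(N+\gamma)}}^2$, which is exactly the quantity appearing on the left-hand side of the claimed estimate (noting that $\chi_R=1$ on $|x|\le R/2$, so dominating $\bigl(\int_{|x|\le R}|v|^{2Np/(N+\gamma)}\bigr)^{(N+\gamma)/(Np)}$ by $\|\chi_R v\|_{L^{2Np/(N+\gamma)}}^2$ requires only an elementary adjustment of the cutoff scale).

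The main obstacle will be the error terms coming from the outer and intermediate regions. In the intermediate annulus $R/2<|x|\le R$ the matrix $a_{jk}$ is nonnegative, so the gradient term has a favorable sign (and the angular part vanishes by radiality as in the NLS proof), and the $|v|^2(-\Delta\Delta a)$ contribution is controlled by $R^{-2}M[v]$. For the outer region $|x|>R$ the local part $\int \tfrac{R}{|x|}(|x|^{-(N-\gamma)}\ast|v|^p)|v|^p$ and the double integral $\iint \nabla a\cdot(x-y)|x-y|^{-(N-\gamma+2)}|v(x)|^p|v(y)|^p$ both need to be absorbed. Here I would peel off one power of $|v|$ from each factor on $\{|x|>R/2\}$ via the radial Sobolev embedding (Lemma \ref{radsob}), which gives $|v(x)|\lesssim R^{-(N-1)/2}$ there, and then use Hardy--Littlewood--Sobolev (Lemma \ref{HLS}) on the remaining $|v|^{p-\theta}$ factors paired by the Riesz kernel, together with the uniform $H^1$ bound, to obtain a bound of the form $R^{-(N-1)(N(p-1)-\gamma)/N}$ when $s<1/2$ (the range in which this power is smaller than $2$). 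For $s\ge 1/2$ the $R^{-2}$ contribution from the bilaplacian dominates, producing the second branch of the dichotomy.

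Integrating the resulting lower bound over $[0,T]$ and combining with $\sup_t|\mathcal{M}_{gH}(t)|\lesssim R$ gives the asserted estimate after dividing by $T$. The delicate step that I expect to be genuinely more work than in the NLS case is the outer-region estimate of the double integral: unlike the local $|v|^{p+1}$ of NLS which integrates directly against $R/|x|$, the kernel $(x-y)/|x-y|^{N-\gamma+2}$ couples the two variables, so care is needed to argue that whenever $|x|>R$ at least one of the two $|v|$ factors can be estimated pointwise by radial Sobolev (splitting into $|y|\ge |x|/2$ and $|y|<|x|/2$ and using $|\nabla a(x)|\lesssim R$) before invoking HLS on the rest; getting the sharp exponent $(N-1)(N(p-1)-\gamma)/N$ out of this decomposition is the technical core of the argument.
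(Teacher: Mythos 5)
Your proposal follows essentially the same route as the paper: the region decomposition driven by the weight $a$, the cutoff identity to pass to $\|\nabla(\chi_R v)\|_{L^2}^2$ at the cost of $O(R^{-2})M[v]$, the coercivity Lemma~\ref{coercivityballgH_new} to lower-bound the localized virial by $\|\chi_R v\|_{L^{2Np/(N+\gamma)}}^2$, radial Sobolev plus HLS for the exterior tails, and the $s\lessgtr 1/2$ dichotomy from comparing $(N-1)(N(p-1)-\gamma)/N$ with $2$.

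One remark on the step you flag as the technical core. Rather than ``peeling a pointwise power $|v|^\theta$ off each factor'' in the outer double integral, the paper's cleaner route is: first apply H\"older and HLS so that everything reduces to the single quantity $\|v\|_{L^{2Np/(N+\gamma)}(|x|>R/2)}$, and then estimate \emph{that} tail by writing $\int_{|x|>R/2}|v|^q \leq \bigl(\sup_{|x|>R/2}|v|\bigr)^{q-2}\|v\|_{L^2}^2$ with $q=\tfrac{2Np}{N+\gamma}$ and invoking Lemma~\ref{radsob}; the exponent $(N-1)(N(p-1)-\gamma)/N$ then falls out of $(q-2)\cdot\tfrac{N-1}{2}\cdot\tfrac{2}{q}$ with no free parameter to tune. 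For the mixed configuration (one variable inside $B(0,R/2)$, the other outside), the paper symmetrizes $\nabla a$ as $\bigl(1-\tfrac{R}{2|x|}\bigr)x-\bigl(1-\tfrac{R}{2|y|}\bigr)y$ and uses $|x|\ll|y|\approx|x-y|$ to dominate the numerator by $|x-y|$, reducing again to the same HLS/Strauss bound — which is the decomposition you gesture at by splitting $|y|\gtrless|x|/2$. So your outline is correct; it just stops short of spelling out that the $L^q$-tail estimate (rather than a term-by-term pointwise peel) is what yields the sharp exponent.
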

\begin{proof}
	Note that by Cauchy-Schwarz, the uniform $H^1$ bound for $u$ and the choice of the weight $a(x)$, we have
	$$
	\sup_{t\in\R}|\mathcal{M}_{gH}(t)|\lesssim R.
	$$
	We recall (from Lemma \ref{morawetz_idgH})
	\begin{align}\label{1}
		\frac{d}{dt}\mathcal{M}_{gH}(t)=& \int|v|^2(-\Delta\Delta a)+4\Re a_{jk}\bar{v}_jv_k\,dx\\\label{2}
		&-4\int\Delta a\left(\frac{1}{2}-\frac{1}{p}\right)\left(|x|^{-(N-\gamma)}\ast|v|^p\right)|v|^p\\\label{3}
		&	-\frac{4(N-\gamma)}{p}\int\int\nabla a\frac{(x-y)|v(x)|^p|v(y)|^p}{|x-y|^{N-\gamma+2}}\,dxdy.
	\end{align}
	For $|x|\leq \frac{R}{2}$, the above expression reduces to 
	\begin{align}\label{morterm1gH}
		8\int_{|x|\leq \frac{R}{2}}^{}|\nabla v|^2-\int_{|x|\leq \frac{R}{2}}^{}\frac{4(N(p-1)-\gamma)}{p}\left(|x|^{-(N-\gamma)}\ast|v|^p\right)|v|^p\,dx.
	\end{align}
	In the region $\frac{R}{2}<|x|\leq R$, \eqref{1} yields
	\begin{align}\label{morterm2gH}
		\int_{\frac{R}{2}<|x|\leq R}^{}4\Re a_{jk}\bar{v}_jv_k+\mathcal{O}\left(\int_{\frac{R}{2}<|x|\leq R}^{}\frac{R}{|x|^3}|v|^2\right)
	\end{align}
	and in $|x|>R$, it gives
	\begin{align}\label{morterm3gH}
	\int_{|x|>R}^{}\frac{4R}{|x|}|\blacktriangledown  v|^2\,dx=0,
	\end{align}
	where $\blacktriangledown$ denotes the angular part of the derivative, which we drop, since $v$ is radial. \\
	
	\noindent
	In the region $\frac{R}{2}<|x|\leq R$, \eqref{2} yields
	\begin{align}\label{morterm4}
		-\int_{\frac{R}{2}<|x|\leq R}^{}4\left(\frac{1}{2}-\frac{1}{p}\right)\frac{R}{|x|}\left(|x|^{-(N-\gamma)}\ast|v|^p\right)|v|^p\,dx
	\end{align}
	and in $|x|>R$, it gives
	\begin{align}\label{morterm5}
		-\int_{|x|>R}^{}\frac{4(N-1)R}{|x|}\left(\frac{1}{2}-\frac{1}{p}\right)\left(|x|^{-(N-\gamma)}\ast|v|^p\right)|v|^p\,dx.
	\end{align}
	We are left with the term in \eqref{3}, which we write as
	\begin{align}\label{morterm6a}
		\frac{2(N-\gamma)c}{p}&\int\int_{\Omega}\left[\left(1-\frac{1}{2}\frac{R}{|x|}\right)x-\left(1-\frac{1}{2}\frac{R}{|y|}\right)y\right]\frac{(x-y)|v(x)|^p|v(y)|^p}{|x-y|^{N-\gamma+2}}\,dxdy\\\label{morterm6b}
		&-\frac{4(N-\gamma)}{p}\int_{|x|>\frac{R}{2}}^{}\left(|x|^{-(N-\gamma)}\ast|v|^p\right)|v|^p\,dx
	\end{align} 
	Here,
	\begin{align*}
	\Omega=\{(x,y)\in\R^N\times\R^N\,:\,|x|> R/2\}\cup\{(x,y)\in\R^N\times\R^N\,:\,|y|> R/2\}.
	\end{align*}		
	We define $\chi_R:=\chi\left(\frac{x}{R}\right)$ for $R>0$. Then we can write the first term in \eqref{morterm1gH} as
	\begin{align}\label{morterm1a}
	\int_{|x|\leq \frac{R}{2}}^{}\chi_R^2|\nabla v|^2=\int_{|x|\leq \frac{R}{2}}^{}|\nabla(\chi_R v)|^2+\int_{|x|\leq \frac{R}{2}}^{}\chi_R\Delta(\chi_R) |v|^2
	\end{align}
	and, thus, \eqref{morterm1gH} can be written as
	\begin{align}\label{morterm1gHa}
	8&\int_{|x|\leq \frac{R}{2}}^{}|\nabla(\chi_R v)|^2+8\int_{|x|\leq \frac{R}{2}}^{}\chi_R\Delta(\chi_R) |v|^2\\\label{morterm1gHb}
	&-\frac{4(N(p-1)-\gamma)}{p}\int_{|x|\leq \frac{R}{2}}^{}\left(|x|^{-(N-\gamma)}\ast|\chi_Rv|^p\right)|\chi_Rv|^p\,dx.	
	\end{align}
	Adding \eqref{morterm4} and \eqref{morterm5}, we estimate \begin{align}\label{morterm4+5}
		\eqref{morterm4}+\eqref{morterm5}\geq-\frac{4(N-1)(p-2)}{p}\int_{|x|>\frac{R}{2}}^{}\left(|x|^{-(N-\gamma)}\ast|v|^p\right)|v|^p\,dx.
	\end{align}
	Now combining \eqref{morterm6b} with \eqref{morterm4+5} and putting together this with \eqref{morterm1gHa}, \eqref{morterm1gHb} and \eqref{morterm6a}, we obtain the following estimate
\begin{align*}
\frac{d}{dt}\mathcal{M}_{gH}(t)\geq&\,\, 8\int_{|x|\leq \frac{R}{2}}^{}|\chi_R\nabla v|^2 - \frac{4(N(p-1)-\gamma)}{p} \int_{|x|\leq \frac{R}{2}}^{}\left(|x|^{-(N-\gamma)}\ast|\chi_Rv|^p\right)|\chi_Rv|^p\\
&+c_1\int\int_{\Omega}\left[\left(1-\frac{1}{2}\frac{R}{|x|}\right)x-\left(1-\frac{1}{2}\frac{R}{|y|}\right)y\right]\frac{(x-y)|v(x)|^p|v(y)|^p}{|x-y|^{N-\gamma+2}}\,dxdy\\
&-c_2\int_{|x|>\frac{R}{2}}^{}\left(|x|^{-(N-\gamma)}\ast|v|^p\right)|v|^p-\frac{1}{R^2}M[v],
\end{align*}
where $c_1,c_2>0$ are some constants. Using Lemma \ref{coercivityballgH_new}, we obtain
	\begin{align*}
	\frac{d}{dt}\mathcal{M}_{gH}(t)\geq&\,\, 8\,\delta\, \|\chi_Rv\|^2_{L^{\frac{2Np}{N+\gamma}}}-c_2\int_{|x|>\frac{R}{2}}^{}\left(|x|^{-(N-\gamma)}\ast|v|^p\right)|v|^p-\frac{1}{R^2}M[v]\\
	&+c_1\int\int_{\Omega}\left[\left(1-\frac{1}{2}\frac{R}{|x|}\right)x-\left(1-\frac{1}{2}\frac{R}{|y|}\right)y\right]\frac{(x-y)|v(x)|^p|v(y)|^p}{|x-y|^{N-\gamma+2}}\,dxdy.
	\end{align*}
		Now we estimate the term
		\begin{align}\label{morterm7}
		\int\int_{\Omega}\left(\left(1-\frac{1}{2}\frac{R}{|x|}\right)x-\left(1-\frac{1}{2}\frac{R}{|x|}\right)y\right)\frac{(x-y)|v(x)|^p|v(y)|^p}{|x-y|^{N-\gamma+2}}\,dxdy.	
		\end{align}		
		The key to estimate the above integral is the radial Sobolev inequaity (Lemma \ref{radsob}). We divide the integral in \eqref{morterm7} into two regions
		\begin{itemize}
			\item Region I: In this region, we consider
			$$
			|x|> \frac{R}{2},\,\,|y|> \frac{R}{2},
			$$
			and observe that
			\begin{align*}
				\left|\left(1-\frac{1}{2}\frac{R}{|x|}\right)x-\left(1-\frac{1}{2}\frac{R}{|y|}\right)y\right|\lesssim |x-y|,
			\end{align*}
			then
			\begin{align}\label{morterm8}\notag
			\int_{\substack{|x|>R/2\\|y|>R/2} }\left(|x|^{-(N-\gamma)}\ast|v|^p\right)|v|^p\,dx&\lesssim \||x|^{-(N-\gamma)}\ast|v|^p\|_{L^{\frac{2N}{N-\gamma}}}\|v\|^p_{L^{\frac{2Np}{N+\gamma}}}\,\,\,\text{(H\"older's inequality)}\\\notag
			&\lesssim \|v\|^{2p}_{L^{\frac{2Np}{N+\gamma}}}\,\,\,\text{(Lemma \ref{HLS})}\\
			&\lesssim \frac{1}{R^{\frac{(N-1)(N(p-1)-\gamma)}{N}}}\|v\|_{L^2}^{\frac{N(p+1)+\gamma}{N}}\|\nabla v\|_{L^2}^{\frac{N(p-1)-\gamma}{N}}\\\notag
			&\lesssim\epsilon\quad\text{(for $R$ large enough)},
			\end{align}
			where the last inequality follows from Lemma \ref{radsob} (radial Sobolev inequality).
			\medskip\item Region II: We consider two cases:
			\begin{itemize}
				\item Case (a):	$|x|\ll|y|\approx |x-y|,\quad |y|>\frac{R}{2}$ and $|x|<\frac{R}{2}$. In this case \eqref{morterm7} becomes
				$$
				\int\int\frac{1}{|x-y|^{N-\gamma}}\,\chi_{|y|>\frac{R}{2}}|v(y)|^p\,|v(x)|^p\,dxdy,
				$$
				since
				\begin{align*}
					\left|x\left(1-\frac{R}{2}\frac{1}{|x|}\right)-y\left(1-\frac{R}{2}\frac{1}{|y|}\right)\right|&\leq \left|x\left(1-\frac{R}{2}\frac{1}{|x|}\right)\right|+\left|y\left(1-\frac{R}{2}\frac{1}{|y|}\right)\right|\\
					&\leq|x|\left(1-\frac{R}{2}\frac{1}{|x|}\right)+|y|\left(1-\frac{R}{2}\frac{1}{|y|}\right)\\
					&\lesssim|y|\approx |x-y|\quad\left(1-\frac{R}{2|y|}>1\,\,\,\text{and}\,\,\,1-\frac{R}{2|x|}<1\right).
				\end{align*}
				Again, using H\"older's inequality, radial Sobolev inequality (Lemma \ref{radsob}), and Hardy-Littlewood-Sobolev inequality (Lemma \ref{HLS}) as in the estimate for \eqref{morterm8}, we bound \eqref{morterm7} by
				\begin{align*}
				\frac{1}{R^{\frac{(N-1)(N(p-1)-\gamma)}{N}}}\|v\|_{L^2}^{\frac{N(p+1)+\gamma}{N}}\|\nabla v\|_{L^2}^{\frac{N(p-1)-\gamma}{N}}\lesssim\epsilon\quad\text{(for $R$ large)}.
				\end{align*}
				\item Case (b): $|y|\ll|x|\approx |x-y|,\quad |x|>\frac{R}{2}$ and $|y|<\frac{R}{2}$. This case is symmetric and treated with a similar argument as in Case (a).
			\end{itemize}		
		\end{itemize}
		Therefore, the contribution of \eqref{morterm7} can be made small enough for large radius.	Thus, we obtain
		\begin{align*}
			\frac{d}{dt}\mathcal{M}_{gH}(t)			\geq&\,\,8 \,\delta\, \|\chi_Rv\|^2_{L^{\frac{2Np}{N+\gamma}}}-\int_{|x|>\frac{R}{2}}^{}\left(|x|^{-(N-\gamma)}\ast|v|^p\right)|v|^p\,dx-\frac{1}{R^2}M[v].
		\end{align*}
		We rearrange the aboves inequality to write
			\begin{align}\label{morterm9_new}
			8\,\delta\,\|\chi_Rv\|^2_{L^{\frac{2Np}{N+\gamma}}}\lesssim \frac{d}{dt}\mathcal{M}_{gH}(t)+\int_{|x|>\frac{R}{2}}^{}\left(|x|^{-(N-\gamma)}\ast|v|^p\right)|v|^p+\frac{1}{R^2}M[v].
			\end{align}
		  Applying the fundamental theorem of calculus to \eqref{morterm9_new} on $[0,T]$, we obtain
		\begin{align}\label{gHtreat}\notag
		8\,\delta\int_{0}^{T}&\left(\int|\chi_Rv(x,t)|^{\frac{2Np}{N+\gamma}}\right)^{\frac{N+\gamma}{Np}}\\
		&\lesssim \sup_{t\in[0,T]}\,|\mathcal{M}_{gH}(t)|+\int_{0}^{T}\int_{|x|>\frac{R}{2}}^{}\left(|x|^{-(N-\gamma)}\ast|v|^p\right)|v|^p+\frac{T}{R^2} M[v].
		\end{align}
		Applying the radial Sobolev inequality (Lemma \ref{radsob}) along with H\"older's and Hardy-Littlewood-Sobolev inequality (Lemma \ref{HLS}), we have
		$$
		\int_{|x|>\frac{R}{2}}^{}\left(|x|^{-(N-\gamma)}\ast|v|^p\right)|v|^p\,dx\lesssim\frac{1}{R^{\frac{(N-1)(N(p-1)-\gamma)}{N}}}.
		$$
		Therefore, we obtain
			$$
			\frac{1}{T}\int_{0}^{T}\left(\int_{|x|\leq R}^{}|v(x,t)|^{\frac{2Np}{N+\gamma}}\,dx\right)^{\frac{N+\gamma}{Np}}\,dt\lesssim \frac{R}{T}+\frac{1}{R^{\frac{(N-1)(N(p-1)-\gamma)}{N}}}+\frac{1}{R^2}.
			$$
		Observe that
		$$
		\begin{cases}
		\frac{(N-1)(N(p-1)-\gamma)}{N}< 2, \quad & \text{for}\,\,s<\frac{1}{2}\\
		\frac{(N-1)(N(p-1)-\gamma)}{N}\geq 2, \quad & \text{for}\,\,s\geq\frac{1}{2}
		\end{cases}.
		$$
		Thus,
		$$
\frac{1}{T}\int_{0}^{T}\left(\int_{|x|\leq R}^{}|v(x,t)|^{\frac{2Np}{N+\gamma}}\,dx\right)^{\frac{N+\gamma}{Np}}\,dt\lesssim_{v,\delta}\begin{cases}
		\frac{R}{T}+\frac{1}{R^{\frac{(N-1)(N(p-1)-\gamma)}{N}}},\quad &\text{if}\,\,s<\frac{1}{2}\\
		\frac{R}{T}+\frac{1}{R^2},\quad &\text{if}\,\, s\geq \frac{1}{2}
		\end{cases},
		$$
		as desired.
\end{proof}
Now we prove that the energy  escapes to spatial infinity as $t\rightarrow\infty$.
\begin{proposition}[Energy evacuation, gH]\label{energyevac_gH}
	There exists a sequence of times $t_n\rightarrow\infty$ and a sequence of radii $R_n\rightarrow\infty$ such that
	$$
	\lim\limits_{n\rightarrow\infty}\left(\int_{|x|\leq R_n}^{}|v(x,t_n)|^{\frac{2Np}{N+\gamma}}\,dx\right)^{\frac{N+\gamma}{Np}}=0.	$$
\end{proposition}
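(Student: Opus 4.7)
The plan is to parallel the NLS argument of Proposition \ref{energyevac}: combine the Morawetz bound of Proposition \ref{morawetz_est_gH} with a $T$-dependent choice of $R$ so that both contributions on the right-hand side vanish as $T\to\infty$, and then extract the desired sequences via a mean-value-for-integrals argument on a shifted time interval.

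First I would select $R=R(T)=T^{\beta}$ to balance the two terms appearing in Proposition \ref{morawetz_est_gH}. In the regime $s\geq 1/2$, take $\beta=1/3$, so that both $R/T$ and $1/R^{2}$ are $O(T^{-2/3})$. In the regime $s<1/2$, take
\[
\beta=\frac{N}{N+(N-1)(N(p-1)-\gamma)},
\]
so that $R/T$ and $R^{-(N-1)(N(p-1)-\gamma)/N}$ coincide and equal $T^{-(N-1)(N(p-1)-\gamma)/[\,N+(N-1)(N(p-1)-\gamma)\,]}$. In either case $0<\beta<1$ (using the intercritical hypothesis $s>0$, which forces $N(p-1)-\gamma>0$), hence $R(T)\to\infty$ while $R(T)/T\to 0$, and Proposition \ref{morawetz_est_gH} applies for $T$ sufficiently large, yielding
\[
\frac{1}{T}\int_{0}^{T}\left(\int_{|x|\leq R(T)}|v(x,t)|^{\frac{2Np}{N+\gamma}}\,dx\right)^{\frac{N+\gamma}{Np}}dt\;\longrightarrow\;0 \quad \text{as } T\to\infty.
\]

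Next I would pick a sequence $T_n\to\infty$ and set $R_n:=R(T_n)\to\infty$. Since the integrand is nonnegative, its average over the half-interval $[T_n/2,T_n]$ is at most twice its average over $[0,T_n]$, so it too tends to zero. The mean value theorem for integrals then supplies $t_n\in[T_n/2,T_n]$ (in particular $t_n\geq T_n/2\to\infty$) at which the integrand is dominated by this vanishing average, giving
\[
\left(\int_{|x|\leq R_n}|v(x,t_n)|^{\frac{2Np}{N+\gamma}}\,dx\right)^{\frac{N+\gamma}{Np}}\longrightarrow 0,
\]
as claimed. The substantive work --- controlling the convolution nonlinearity and the off-diagonal Morawetz term via the radial Sobolev inequality and Hardy--Littlewood--Sobolev --- has already been carried out in Proposition \ref{morawetz_est_gH}; the only care needed here is to check that the chosen $\beta$ lies strictly between $0$ and $1$, which is automatic from the intercritical range, so I do not anticipate any additional obstacle.
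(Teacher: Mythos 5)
Your proposal follows the paper's own argument almost verbatim: you choose the same $T$-dependent radius $R=T^{\beta}$ (with $\beta=1/3$ for $s\geq 1/2$ and $\beta=\frac{N}{N+(N-1)(N(p-1)-\gamma)}$ for $s<1/2$), feed it into Proposition \ref{morawetz_est_gH} to make the time-average vanish, and extract $(t_n,R_n)$ by a mean-value argument exactly as the paper does by reference to Proposition \ref{energyevac}. Your explicit restriction to the half-interval $[T_n/2,T_n]$ makes the claim $t_n\to\infty$ transparent, which the paper leaves implicit, but this is a cosmetic refinement of the same proof rather than a different route.
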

\begin{proof}
	For large $T>0$ and $R>R_0$ with $R=T^{\frac{N}{N+(N-1)(N(p-1)-\gamma)}}$ for $s<\frac{1}{2}$ and $R=T^{1/3}$ for $s\geq \frac{1}{2}$ and Proposition \ref{morawetz_est_gH}, we obtain that for $s<\frac{1}{2}$
	$$
	\frac{1}{T}\int_{0}^{T}\left(\int_{|x|\leq t^{\frac{N}{N+(N-1)(N(p-1)-\gamma)}}}^{}|v(x,t)|^{\frac{2Np}{N+\gamma}}\,dx\right)^{\frac{N+\gamma}{Np}}\,dt\lesssim T^{-\frac{(N-1)(N(p-1)-\gamma)}{N+(N-1)(N(p-1)-\gamma)}}
	$$
	and for $s\geq \frac{1}{2}$
	$$
	\frac{1}{T}\int_{0}^{T}\left(\int_{|x|\leq t^{1/3}}^{}|v(x,t)|^{\frac{2Np}{N+\gamma}}\,dx\right)^{\frac{N+\gamma}{Np}}\,dt\lesssim T^{-2/3}.
	$$
	This implies that (following the similar argument as in Proposition \ref{energyevac}) there exist sequences $t_n\rightarrow\infty$ and $R_n\rightarrow\infty$ such that the energy evacuation happens on a sequence of balls as desired.
\end{proof}
\begin{proof}[Proof of Theorem \ref{main2}]
	We have (by Section \ref{variational_gH}) that $v$ is global and uniformly bounded in $H^1$. Choose $\varepsilon$ and $R$ as in Lemma \ref{Scat_crit_gH} with $t_n\rightarrow\infty$ and $R_n\rightarrow\infty$ as in Proposition \ref{energyevac_gH}. Now taking $n$ large so that $R_n\geq R$, H\"older's inequality gives
	$$
	\int_{|x|\leq R}^{}|v(x,t_n)|^2\,dx\lesssim R^{\frac{2s(p-1)+2}{p}}\left(\int_{|x|\leq R_n}^{}|v(x,t_n)|^{\frac{2Np}{N+\gamma}}\,dx\right)^\frac{N+\gamma}{Np}\rightarrow 0 \quad \text{as}\quad n\rightarrow \infty
	$$
	Therefore, Lemma \ref{Scat_crit_gH} implies that $v$ scatters in $H^1(\R^N)$ forward in time.
\end{proof}

\appendix
\section{Modifications to the proof of Lemma \ref{Scat_crit}}\label{OM}
\noindent {\it Proof of Lemma \ref{Scat_crit} for $0<s<1$.} To accommodate the full intercritical range, we substitute symmetric pair in \eqref{OM-change} with the following pair   
$$
\left(\frac{2(p+1)(p-1)}{2(p+1)-N(p-1)},p+1\right).
$$
 For reader's convenience we provide details below.

	Let $0<\varepsilon <1$ be a small constant and $R(\varepsilon)\gg 1$ be a large number, both to be chosen later. By a similar argument as in Lemma \ref{Scat_crit}, we have for (large enough) time $T_0 > \varepsilon^{-\frac{p-1}{2}}$ $> 1$ 
	\begin{align}\label{linearcomp-M}
	\|e^{it\Delta}\,u_0\|_{L^{\frac{2(p+1)(p-1)}{2(p+1)-N(p-1)}}_{t}L_x^{p+1}(\R\times\R^N)} \lesssim \varepsilon.	
	\end{align}
	We set $0<\alpha<p-1$ so that $T_1-\varepsilon^{-\alpha}>0$ and for large enough $R\gg0$, we get (by similar arguments as in Lemma \ref{Scat_crit}) 
	\begin{equation}\label{localmass-M}
	\sup_{t\in[T_1-\varepsilon^{-\alpha},T_1]}\int\chi_R(x)|u(x,t)|^2\,dx\lesssim\varepsilon.
	\end{equation}
	From \eqref{linearcomp-M} we note that the contribution from the linear component in \eqref{duhamelexp2} is small. For the second term in \eqref{duhamelexp2}, we use a similar argument as in the proof of Lemma \ref{Scat_crit} to obtain 
	\begin{align*}
	\|\int_{I_1}^{}e^{i(t-s)\Delta}(|u|^{p-1}u)(s)\,ds&\|_{L^{\frac{2(p+1)(p-1)}{2(p+1)-N(p-1)}}_{t}L_x^{p+1}([T_1,\infty)\times\R^N)}\\
	&\lesssim\||u|^{p-1}u\|_{L_{t}^{\frac{2(p+1)(p-1)}{2(p+1)p-N(p-1)p}}L_x^{\frac{p+1}{p}}(I_1\times\R^N)}\\
	&\lesssim\|u\|^p_{L_{t}^{\frac{2(p+1)(p-1)}{2(p+1)-N(p-1)}}L_x^{p+1}(I_1\times\R^N)}.
	\end{align*}
	Here, the pair $\Big(\frac{2(p+1)(p-1)}{2(p+1)p-N(p-1)p},\frac{p+1}{p}\Big)$ is a H\"older conjugate for  $\Big(\frac{2(p+1)(p-1)}{N(p-1)p-2(p+1)},p+1\Big)$, which is an $\dot{H}^{-s}$-admissible pair.
	Using H\"older's inequality in time, we get
	\begin{align*}
	\|\int_{T_1-\varepsilon^{-\alpha}}^{T_1}e^{i(t-s)\Delta}(|u|^{p-1}u)(s)\,ds&\|_{L^{\frac{2(p+1)(p-1)}{2(p+1)-N(p-1)}}_{t}L_x^{p+1}([T_1,\infty)\times\R^N)}\\
	&\lesssim|I_1|^{\frac{2(p+1)p-Np(p-1)}{2(p+1)(p-1)}}\|u\|^p_{L_{t}^{\infty}L_x^{p+1}(I_1\times\R^N)}.
	\end{align*}
	Using H\"older's inequality, 
	\eqref{localmass-M}, and Lemma \ref{radsob}, we have
	\begin{align*}
	\|u\|_{L^{\infty}_tL^{p+1}_x(I_1\times\R^N)}&=\|u\|_{L^{\infty}_tL^{p+1}_x(I_1\times B(0,R/2))}+\|u\|_{L^{\infty}_tL^{p+1}_x(I_1\times \R^N\backslash B(0,R/2))}\\
	&\lesssim\|\chi_Ru\|^{\frac{2(p+1)-N(p-1)}{2(p+1)}}_{L_t^{\infty}L_x^2(I_1\times\R^N)}\|u\|^{\frac{N(p-1)}{2(p+1)}}_{L_t^{\infty}L_x^{\frac{2N}{N-2}}(I_1\times \R^N)}	\\
	&\qquad\quad+ \|(1-\chi_R)u\|^{\frac{p-1}{p+1}}_{L_t^{\infty}L_x^{\infty}(I_1\times\R^N)}\|u\|^{\frac{2}{p+1}}_{L_t^{\infty}L_x^{2}(I_1\times\R^N)}\\
	&\lesssim\varepsilon^{\frac{2(p+1)-N(p-1)}{2(p+1)}}+R^{-\frac{p-1}{p+1}}\lesssim\varepsilon^{\frac{2(p+1)-N(p-1)}{2(p+1)}},
	\end{align*}
	where the last inequality follows by choosing $R > 0$ 
	such that $R^{-\frac{p-1}{p+1}}\ll\varepsilon^{\frac{2(p+1)-N(p-1)}{2(p+1)}}$. 
	Therefore, we obtain
	\begin{align*}
	\|\int_{T_1-\varepsilon^{-\alpha}}^{T_1}e^{i(t-s)\Delta}(|u|^{p-1}u)(s)\,ds&\|_{L^{\frac{2(p+1)(p-1)}{2(p+1)-N(p-1)}}_{t}L_x^{p+1}([T_1,\infty)\times\R^N)}\\\notag
	\lesssim&\,\,\varepsilon^{\frac{2(p+1)p-Np(p-1)}{2(p+1)}}|I_1|^{\frac{2(p+1)p-Np(p-1)}{2(p+1)(p-1)}}\\
	\lesssim&\,\,\varepsilon^{\frac{2(p+1)p-N(p-1)p}{2(p+1)}\left(1-\frac{\alpha}{p-1}\right)}.
	\end{align*}
	Take $0<\beta_1< \frac{2(p+1)p-N(p-1)p}{2(p+1)}\left(1-\frac{\alpha}{p-1}\right)$, then  
	\begin{align}\label{2ndterm-M}
	\|\int_{T_1-\varepsilon^{-\alpha}}^{T_1}e^{i(t-s)\Delta}(|u|^{p-1}u)(s)\,ds\|_{L^{\frac{2(p+1)(p-1)}{2(p+1)-N(p-1)}}_{t}L_x^{p+1}([T_1,\infty)\times\R^N)}\lesssim\varepsilon^{\beta_1}.
	\end{align}
	We split $F_2(u(t))$ from \eqref{duhamelexp2} via interpolation for $1<s_1<\frac{N}{2}$ as
	\begin{align}\label{F2-interpolation}\notag
	\|F_2(u)&\|_{L^{\frac{2(p+1)(p-1)}{2(p+1)-N(p-1)}}_{t}L_x^{p+1}([T_1,\infty)\times\R^N)}\\
	&\lesssim\|F_2(u)\|^{\frac{s_1-s}{s_1}}_{L_{t}^{q_1}L_x^{r_1}([T_1,\infty)\times\R^N)}\|F_2(u)\|^{\frac{s}{s_1}}_{L_{t}^{q_2}L_x^{\infty}([T_1,\infty)\times\R^N)},
	\end{align}
	where, $q_2=\frac{4}{N-2s_1}$ and $(q_1,r_1)$ is an $L^2$-admissible pair, since $N\geq 3$ and $s_1>1$ it follows that  $2<r_1<\frac{2N}{N-2}$.
	By the dispersive estimate \eqref{dispest} for $t\in[T_1,\infty)$, we bound
	\begin{align*}
	\|F_2(u(t))\|_{L^{\infty}_x(\R^N)}\lesssim\int_{0}^{T_1-\varepsilon^{-\alpha}}(t-s)^{-\frac{N}{2}}\|u(s)\|^{p}_{L^{p}(\R^N)}ds.
	\end{align*}
	Observe that $p\geq 2$, we have $H^1(\R^N)\hookrightarrow L^{p}(\R^N)$ and the same is valid for $1<p<2$ from Gagliardo-Nirenberg interpolation inequality,
	and thus, by \eqref{unibnd} we obtain (for $\frac{1}{q_2}<\frac{N-2}{4}<\frac{N-2}{2}$ since $s_1>1$)
	\begin{align*}
	\|F_2(u(t))\|_{L_t^{q_2}L^{\infty}_x(\R^N)} 
	& \lesssim 
	\varepsilon^{\alpha\left(\frac{N-2}{2}-\frac{1}{q_2}\right)}=\varepsilon^{\alpha\left(\frac{N-4+2s_1}{4}\right)},
	\end{align*}
	thus, using $2s_1<N$, we  get
	\begin{align}\label{E:forbeta2-M}
	\|F_2(u)\|_{L_{t}^{q_2}L_x^{\infty}([T_1,\infty)\times\R^N)}\lesssim
	{\varepsilon^{\frac{\alpha(N-2)}{2}}}.
	\end{align}
	Again using a similar argument (as in Lemma \ref{Scat_crit}) for the other term in the interpolation \eqref{F2-interpolation}, we have
	$$
	\|F_2(u)\|_{L^{q_1}_{t}L_x^{r_1}([T_1,\infty),\R^N)}\lesssim \|u(T_1-\varepsilon^{-\alpha})\|_{L^2(\R^N)}+\|u(0)\|_{L^2(\R^N)}\lesssim 1.
	$$
	Take $0<\beta_2< \frac{\alpha(N-2)s}{2s_1}$, then from \eqref{E:forbeta2-M}, we estimate $F_2$ as
	\begin{align}\label{3rdterm-M}
	\|F_2(u)\|_{L^{\frac{2(p+1)(p-1)}{2(p+1)-N(p-1)}}_{t}L_x^{p+1}([T_1,\infty)\times\R^N)}\lesssim\varepsilon^{\beta_2}.
	\end{align}
	Putting together \eqref{linearcomp-M}, \eqref{2ndterm-M}, and \eqref{3rdterm-M} gives
	\begin{align}\label{linear-bound}
	\|e^{i(t-T_1)\Delta}u(T_1)\|_{L^{\frac{2(p+1)(p-1)}{2(p+1)-N(p-1)}}_{t}L_x^{p+1}([T_1,\infty)\times\R^N)}\lesssim\varepsilon^{\mu},
	\end{align}
	where $\mu \leq \min \{\beta_1,\beta_2\}>0$.
	Note that on $[T_1,t]$, we have
	\begin{align*}
	u(t)=e^{i(t-T_1)\Delta}u(T_1)-i\int_{T_1}^{t}e^{i(t-s)\Delta}|u|^{p-1}u(s)\,ds.
	\end{align*}
	From \eqref{linear-bound} and Strichartz estimate, we observe that
	$$
	\|u \|_{L^{\frac{2(p+1)(p-1)}{2(p+1)-N(p-1)}}_{t}L_x^{p+1}([T_1,\infty)\times\R^N)}\lesssim\varepsilon^{\mu}+ \|\nabla( |u|^{p-1}u) \|_{L_t^{\frac{4(p+1)}{4(p+1-N(p-1))}}L_{x}^{\frac{p+1}{p}}([T_1,+\infty)\times\R^N)}.
	$$
	Applying the product rule and H\"older's inequality, we 
	\begin{align*}
	\|\nabla (|u|^{p-1}u)& \|_{L_t^{\frac{4(p+1)}{4(p+1-N(p-1))}}L_{x}^{\frac{p+1}{p}}([T_1,+\infty)\times\R^N)}\\
	&\lesssim \|u \|^{p-1}_{L^{\frac{2(p+1)(p-1)}{2(p+1)-N(p-1)}}_{t}L_x^{p+1}([T_1,\infty)\times\R^N)} 
	\|\nabla u \|_{L_{t}^{\frac{4(p+1)}{N(p-1)}}L_x^{p+1}([T_1,+\infty)\times\R^N)}.
	\end{align*}
	Thus, from \eqref{unibnd}, \eqref{3rdterm-M} and using a standard continuity argument on the nonlinear flow, we observe that
	\begin{equation}\label{scatbnd-M}
	\|u\|_{L^{\frac{2(p+1)(p-1)}{2(p+1)-N(p-1)}}_{t}L_x^{p+1}([T_1,\infty)\times\R^N)}\lesssim\varepsilon^{\mu}.
	\end{equation}
	Now, we consider $u_+$ (defined in \eqref{finalstate}) together with \eqref{Duhamelexp2} to get
	$$
	u(t)-e^{it\Delta}u_+=i\int_{t}^{+\infty}e^{i(t-s)\Delta}F(u(s))\,ds
	$$		
	for all $t\geq T_1$. Therefore, estimating the $H^1$ norm
	\begin{align*}
	\|u(t)-e^{it\Delta}u_+\|_{H^1}&\lesssim \left\|\int_{t}^{+\infty}e^{i(t-s)\Delta}(1+|\nabla|)F(u(s))\,ds\right\|_{S(L^2)}\\
	&\lesssim\|(1+|\nabla|)F(u)\|_{S'(L^2;[t,+\infty))}\\
	&\lesssim \|u \|^{p-1}_{L^{\frac{2(p+1)(p-1)}{2(p+1)-N(p-1)}}_{t}L_x^{p+1}([T_1,\infty)\times\R^N)} 
	\|\nabla u \|_{L_{t}^{\frac{4(p+1)}{N(p-1)}}L_x^{p+1}([T_1,+\infty)\times\R^N)}.
	\end{align*} 
	By \eqref{unibnd} and \eqref{scatbnd-M} we observe that the Strichartz norm on $[T_1,+\infty)$ for the above expression is bounded, therefore, the tail has to vanish as $t\rightarrow+\infty$. Hence,
	$$
	\lim\limits_{t\rightarrow+\infty}||u(t)-e^{it\Delta}u_+||_{H^1(\R^N)}=0.
	$$
	\qed

\section{Modications to the proof of Lemma \ref{Scat_crit_gH}}\label{OM3}
\noindent {\it Proof of Lemma \ref{Scat_crit_gH} for $0<s<1$.} Similar to NLS, we substitute the symmetric pair $\Big(\frac{2(N+2)}{N-2s},\frac{2(N+2)}{N-2s}\Big)$ with the following pair
$$
\left(\frac{2p}{1-s},\frac{2Np}{N+\gamma}\right).
$$
Further changes are as follows. 

 As pointed out before (in Lemma \ref{Scat_crit_gH}), the proof is similar to the previous Lemma, except for the estimates of $F_1(t)$ and $F_2(t)$ (defined at the start in the proof of Lemma \ref{Scat_crit_gH}).
	To derive the estimate for $F_1(t)$, we proceed as follows (using the same argument as in Lemma \ref{Scat_crit_gH})
	\begin{align*}
	\|F_1(v)\|_{L_{t}^{\frac{2p}{1-s}}L_x^{\frac{2Np}{N+\gamma}}([T_1,\infty)\times\R^N)}&\lesssim \| (|x|^{-(N-\gamma)}\ast|v|^p)|v|^{p-2}v\|_{{L_t^{\frac{2p}{(2p-1)(1-s)}}L_x^{\frac{2Np}{2Np-N-\gamma}}}(I_1\times\R^N)}
	\end{align*}
	for $t\in I_1=[T_1-\varepsilon^{-\alpha},T_1]$, where the pair $\Big(\frac{2p}{(2p-1)(1-s)},\frac{2Np}{2Np-N-\gamma}\Big)$ is a H\"older conjugate for  $\Big(\frac{2p}{1+s(2p-1)},\frac{2Np}{N+\gamma}\Big)$, which is an $\dot{H}^{-s}$-admissible pair.	Using the H\"older's together with Hardy-Littlewood-Sobolev Lemma \ref{HLS}, we get
	\begin{align}\label{F1-M}\notag
	\|(&|x|^{-(N-\gamma)}\ast|v|^p)|v|^{p-2}v\|_{L_t^{\frac{2p}{(2p-1)(1-s)}}L_x^{\frac{2Np}{2Np-N-\gamma}}(I_1\times\R^N)}\\\notag
	\lesssim&\,\||x|^{-(N-\gamma)}\ast|v|^p\|_{L^{\frac{2}{1-s}}_tL^{\frac{2N}{N-\gamma}}_x(I_1\times\R^N)}\|v\|^{p-1}_{L^{\frac{2p}{1-s}}_tL^{\frac{2Np}{N+\gamma}}_x(I_1\times\R^N)}\\
	\lesssim&\,\|v\|^{2p-1}_{L^{\frac{2p}{1-s}}_tL^{\frac{2Np}{N+\gamma}}_x(I_1\times\R^N)}.
	\end{align}
	Using H\"older's inequality in time, we have
	$$
	\|F_1(v)\|_{L_{t}^{\frac{2p}{1-s}}L_x^{\frac{2Np}{N+\gamma}}([T_1,\infty)\times\R^N)}\lesssim |I_1|^{\frac{(2p-1)(1-s)}{2p}} \|v\|^{2p-1}_{L^{\infty}_tL^{\frac{2Np}{N+\gamma}}_x(I_1\times\R^N)}.
	$$
	Using H\"older's inequality, 
	\eqref{localmass-M}, and Lemma \ref{radsob}, we have
	\begin{align*}
	&\|u\|_{L^{\infty}_tL^{\frac{2Np}{N+\gamma}}_x(I_1\times\R^N)}=\|u\|_{L^{\infty}_tL^{\frac{2Np}{N+\gamma}}_x(I_1\times B(0,R/2))}+\|u\|_{L^{\infty}_tL^{\frac{2Np}{N+\gamma}}_x(I_1\times \R^N\backslash B(0,R/2))}\\
	&\lesssim\|\chi_Ru\|^{\frac{2p+\gamma-N(p-1)}{2p}}_{L_t^{\infty}L_x^2(I_1\times\R^N)}\|u\|^{\frac{N(p-1)-\gamma}{2p}}_{L_t^{\infty}L_x^{\frac{2N}{N-2}}(I_1\times \R^N)}	+ \|(1-\chi_R)u\|^{\frac{N(p-1)-\gamma}{Np}}_{L_t^{\infty}L_x^{\infty}(I_1\times\R^N)}\|u\|^{\frac{N+\gamma}{Np}}_{L_t^{\infty}L_x^{2}(I_1\times\R^N)}\\
	&\lesssim\varepsilon^{\frac{2p+\gamma-N(p-1)}{2p}}+R^{-\frac{N(p-1)-\gamma}{Np}}.
	\end{align*}
	Choosing $R > 0$ 
	such that $R^{-\frac{N(p-1)-\gamma}{Np}}\ll\varepsilon^{\frac{2p+\gamma-N(p-1)}{2p}}$, we get
	\begin{align*}
	\|u \|_{L^{\infty}_tL^{\frac{2Np}{N+\gamma}}_x(I_1\times\R^N)}\lesssim\varepsilon^{\frac{2p+\gamma-N(p-1)}{2p}}.
	\end{align*}
	Therefore, we obtain
	$$
	\|F_1(v)\|_{L_{t}^{\frac{2p}{1-s}}L_x^{\frac{2Np}{N+\gamma}}([T_1,\infty)\times\R^N)}\lesssim\varepsilon^{\frac{2p+\gamma-N(p-1)}{2p}-\alpha\,\frac{(2p-1)(1-s)}{2p}}.
	$$
	So, for $0<\alpha<\frac{2(p-1)}{2p-1}$, we have that 
	\begin{align}\label{2ndtermgH-M}
	\|F_1(v)\|_{L_{t}^{\frac{2p}{1-s}}L_x^{\frac{2Np}{N+\gamma}}([T_1,\infty)\times\R^N)}\lesssim\varepsilon^{\beta_3},
	\end{align}
	where $\beta_3>0$ chosen in the same way as in the previous lemma. To derive the estimate for $F_2(t)$ we use interpolation to get
	\begin{align*}
	\|F_2(v)&\|_{L_{t}^{\frac{2p}{1-s}}L_x^{\frac{2Np}{N+\gamma}}([T_1,\infty)\times\R^N)}\\
	&\lesssim\|F_2(v)\|^{\frac{N+\gamma}{N+\gamma+2sp}}_{L_{t}^{\frac{2p}{1-s}}L_x^{\frac{2Np}{N+\gamma+2sp}}([T_1,\infty)\times\R^N)}\|F_2(v)\|^{\frac{2sp}{N+\gamma+2sp}}_{L_{t}^{\frac{2p}{1-s}}L_x^{\infty}([T_1,\infty)\times\R^N)}.
	\end{align*}
	Note that $\frac{2p}{1-s}>\frac{2}{N-2}$ and $\Big(\frac{2p}{1-s},\frac{2Np}{N+\gamma+2sp}\Big)$ is an $L^2$-admissible pair.
	We bound the last term above for $t\in[T_1,\infty)$ using the dispersive estimate \eqref{dispest} followed by H\"older's inequality and Lemma \ref{HLS} yields
	$$
	\|F_2(v(t))\|_{L_x^{\infty}(\R^N)}\lesssim\int_{0}^{T_1-\varepsilon^{-\alpha}}(t-s)^{-\frac{N}{2}}\|v\|_{L^{p_2}(\R^N)}^p\|v\|^{p-1}_{L^{p_1}(\R^N)}ds,
	$$
	where $p_1=\frac{2N(p-1)}{N+\gamma}$ and $p_2=\frac{2Np}{N+\gamma}$. Observe that for $N\geq 3$ and $2\leq p<\frac{N+\gamma}{N-2}$ one has the embedding $H^1(\R^N)\hookrightarrow L^{p_{i}}(\R^N)$ with $i=1,2$,  
	and thus, by \eqref{unibnd}
	\begin{align*}
	\|F_2(v(t))\|_{L_t^{\frac{2p}{1-s}}L_x^{\infty}([T_1,\infty)\times\R^N)}\lesssim \varepsilon^{\alpha\left(\frac{N-2}{2}-\frac{1-s}{2p}\right)}.
	\end{align*}
	Next, note that 
	$$
	F_2(t)=e^{i(t-T_1+\varepsilon^{-\alpha})\Delta} v(T_1-\varepsilon^{-\alpha}) - e^{it\Delta} v(0),
	$$
	and using the Strichartz estimate along with \eqref{unibnd}, we have
	$$
	\|F_2(v)\|_{L^\frac{2p}{1-s}_{t}L_x^{\frac{2Np}{N+\gamma+2sp}}([T_1,\infty)\times\R^N)}\lesssim \|v(T_1-\varepsilon^{-\alpha})\|_{L^2(\R^N)}+\|v(0)\|_{L^2(\R^N)}\lesssim 1.
	$$
	Similar to the step \eqref{3rdterm-M}, we take $\beta_4>0$ so that
	\begin{align}\label{3rdtermgH-M}
	\|F_2(v)\|_{L^\frac{2p}{1-s}_{t}L_x^{\frac{2Np}{N+\gamma}}([T_1,\infty)\times\R^N)}\lesssim\varepsilon^{\beta_4}.
	\end{align}
	Putting together \eqref{2ndtermgH-M}, and \eqref{3rdtermgH-M} along with the small contribution of linear component (similar to NLS) gives
	\begin{align}\label{boundgH-M}
	\|e^{i(t-T_1)\Delta}v(T_1)\|_{L^\frac{2p}{1-s}_{t}L_x^{\frac{2Np}{N+\gamma}}([T_1,\infty)\times\R^N)}\lesssim\varepsilon^{\nu}
	\end{align}
	for $0<\nu \leq \min \{\beta_3, \beta_4\}$. For the bound on the nonlinear solution we again consider Duhamel's formula
	\begin{align*}
	v(t)=e^{i(t-T_1)\Delta}v(T_1)-i\int_{T_1}^{t}e^{i(t-s)\Delta}F(v(s))\,ds,
	\end{align*}
	where $F(v) = (|x|^{-(N-\gamma)}\ast|v|^p)|v|^{p-2}v$. Taking $L^\frac{2p}{1-s}_{t}L_x^{\frac{2Np}{N+\gamma}}([T_1,\infty)\times\R^N)$ norms, we observe from the linear evolution bound and inhomogeneous Strichartz estimate that
	$$
	\|v\|_{L^\frac{2p}{1-s}_{t}L_x^{\frac{2Np}{N+\gamma}}([T_1,\infty)\times\R^N)}\lesssim\varepsilon^{\nu}+\|\nabla F(v)\|_{L_{t}^{\frac{2p}{2p-1-s(p-1)}}L_x^{\frac{2Np}{2Np-N-\gamma}}([T_1,\infty)\times\R^N)}.
	$$
	By H\"older's inequality, product rule and Lemma \ref{HLS}, we obtain
	\begin{align*}
	\|\nabla\big((|x|^{-(N-\gamma)}&\ast|v|^p)|v|^{p-2}v\big)\|_{L_{t}^{\frac{2p}{2p-1-s(p-1)}}L_x^{\frac{2Np}{2Np-N-\gamma}}([T_1,\infty)\times\R^N)}\\
	&
	\lesssim\,\|v\|^{2(p-1)}_{L^{\frac{2p}{1-s}}_{t}L_x^{\frac{2Np}{N+\gamma}}([T_1,\infty)\times\R^N)}\|\nabla v\|_{L^{\frac{2p}{1+s(p-1)}}_{t}L_x^{\frac{2Np}{N+\gamma}}([T_1,\infty)\times\R^N)}.
	\end{align*}
	The remaining proof follows similar reasoning as in Lemma \ref{Scat_crit}, completing the proof.\qed




\bibliography{Andy-references}

\begin{thebibliography}{10}

\bibitem{AN13}
T.~Akahori and H.~Nawa.
\newblock Blowup and scattering problems for the nonlinear {S}chr\"{o}dinger
  equations.
\newblock {\em Kyoto J. Math.}, 53(3):629--672, 2013.

\bibitem{AKAR}
A.~K. Arora and S.~Roudenko.
\newblock Global behavior of solutions to the focusing generalized {H}artree
  equations.
\newblock {\em submitted, preprint available at arXiv:1904.05339}, 2019.

\bibitem{BL83I}
H.~Berestycki and P.-L. Lions.
\newblock Nonlinear scalar field equations. {I}. {E}xistence of a ground state.
\newblock {\em Arch. Rational Mech. Anal.}, 82(4):313--345, 1983.

\bibitem{BL83II}
H.~Berestycki and P.-L. Lions.
\newblock Nonlinear scalar field equations. {II}. {E}xistence of infinitely
  many solutions.
\newblock {\em Arch. Rational Mech. Anal.}, 82(4):347--375, 1983.

\bibitem{LC}
L.~Campos.
\newblock Scattering of radial solutions to the {I}nhomogeneous {N}onlinear
  {S}chr{\"o}dinger {E}quation.
\newblock {\em arXiv preprint arXiv:1905.02663}, 2019.

\bibitem{C03}
T.~Cazenave.
\newblock {\em Semilinear {S}chr\"{o}dinger equations}, volume~10 of {\em
  Courant Lecture Notes in Mathematics}.
\newblock New York University, Courant Institute of Mathematical Sciences, New
  York; American Mathematical Society, Providence, RI, 2003.

\bibitem{CR07}
J.~Colliander and S.~Roudenko.
\newblock Mass concentration window size and {S}trichartz norm divergence rate
  for the {$L^2$}-critical nonlinear {S}chr\"{o}dinger equation.
\newblock {\em J. Hyperbolic Differ. Equ.}, 4(4):613--627, 2007.

\bibitem{DM17}
B.~Dodson and J.~Murphy.
\newblock A new proof of scattering below the ground state for the 3{D} radial
  focusing cubic {NLS}.
\newblock {\em Proc. Amer. Math. Soc.}, 145(11):4859--4867, 2017.

\bibitem{DM18}
B.~Dodson and J.~Murphy.
\newblock A new proof of scattering below the ground state for the non-radial
  focusing {NLS}.
\newblock {\em Math. Res. Lett.}, 25(6):1805--1825, 2018.

\bibitem{DHR08}
T.~Duyckaerts, J.~Holmer, and S.~Roudenko.
\newblock Scattering for the non-radial 3{D} cubic nonlinear {S}chr\"odinger
  equation.
\newblock {\em Math. Res. Lett.}, 15(6):1233--1250, 2008.

\bibitem{FXC11}
D.~Fang, J.~Xie, and T.~Cazenave.
\newblock Scattering for the focusing energy-subcritical nonlinear
  {S}chr\"{o}dinger equation.
\newblock {\em Sci. China Math.}, 54(10):2037--2062, 2011.

\bibitem{F05}
D.~Foschi.
\newblock Inhomogeneous {S}trichartz estimates.
\newblock {\em J. Hyperbolic Differ. Equ.}, 2(1):1--24, 2005.

\bibitem{GV80}
J.~Ginibre and G.~Velo.
\newblock On a class of nonlinear {S}chr\"{o}dinger equations with nonlocal
  interaction.
\newblock {\em Math. Z.}, 170(2):109--136, 1980.

\bibitem{G}
C.~D. Guevara.
\newblock Global behavior of finite energy solutions to the {$d$}-dimensional
  focusing nonlinear {S}chr\"odinger equation.
\newblock {\em Appl. Math. Res. Express. AMRX}, (2):177--243, 2014.

\bibitem{HR07}
J.~Holmer and S.~Roudenko.
\newblock On blow-up solutions to the 3{D} cubic nonlinear {S}chr\"odinger
  equation.
\newblock {\em Appl. Math. Res. Express. AMRX}, (1):Art. ID abm004, 31, 2007.

\bibitem{HR08}
J.~Holmer and S.~Roudenko.
\newblock A sharp condition for scattering of the radial 3{D} cubic nonlinear
  {S}chr\"odinger equation.
\newblock {\em Comm. Math. Phys.}, 282(2):435--467, 2008.

\bibitem{KM06}
C.~E. Kenig and F.~Merle.
\newblock Global well-posedness, scattering and blow-up for the
  energy-critical, focusing, non-linear {S}chr\"odinger equation in the radial
  case.
\newblock {\em Invent. Math.}, 166(3):645--675, 2006.

\bibitem{KLR09}
J.~Krieger, E.~Lenzmann, and P.~Rapha\"el.
\newblock On stability of pseudo-conformal blowup for {$L^2$}-critical
  {H}artree {NLS}.
\newblock {\em Ann. Henri Poincar\'e}, 10(6):1159--1205, 2009.

\bibitem{K89}
M.~K. Kwong.
\newblock Uniqueness of positive solutions of {$\Delta u-u+u^p=0$} in {${\bf
  R}^n$}.
\newblock {\em Arch. Rational Mech. Anal.}, 105(3):243--266, 1989.

\bibitem{Lieb77}
E.~H. Lieb.
\newblock Existence and uniqueness of the minimizing solution of {C}hoquard's
  nonlinear equation.
\newblock {\em Studies in Appl. Math.}, 57(2):93--105, 1976/77.

\bibitem{Lieb83}
E.~H. Lieb.
\newblock Sharp constants in the {H}ardy-{L}ittlewood-{S}obolev and related
  inequalities.
\newblock {\em Ann. of Math. (2)}, 118(2):349--374, 1983.

\bibitem{Lions80}
P.-L. Lions.
\newblock The {C}hoquard equation and related questions.
\newblock {\em Nonlinear Anal.}, 4(6):1063--1072, 1980.

\bibitem{Lions82}
P.-L. Lions.
\newblock Compactness and topological methods for some nonlinear variational
  problems of mathematical physics.
\newblock In {\em Nonlinear problems: present and future ({L}os {A}lamos,
  {N}.{M}., 1981)}, volume~61 of {\em North-Holland Math. Stud.}, pages 17--34.
  North-Holland, Amsterdam-New York, 1982.

\bibitem{MS13}
V.~Moroz and J.~Van~Schaftingen.
\newblock Groundstates of nonlinear {C}hoquard equations: existence,
  qualitative properties and decay asymptotics.
\newblock {\em J. Funct. Anal.}, 265(2):153--184, 2013.

\bibitem{MS17}
V.~Moroz and J.~Van~Schaftingen.
\newblock A guide to the {C}hoquard equation.
\newblock {\em J. Fixed Point Theory Appl.}, 19(1):773--813, 2017.

\bibitem{S77}
W.~A. Strauss.
\newblock Existence of solitary waves in higher dimensions.
\newblock {\em Comm. Math. Phys.}, 55(2):149--162, 1977.

\bibitem{Tao04}
T.~Tao.
\newblock On the asymptotic behavior of large radial data for a focusing
  non-linear {S}chr\"{o}dinger equation.
\newblock {\em Dyn. Partial Differ. Equ.}, 1(1):1--48, 2004.

\bibitem{Tao06}
T.~Tao.
\newblock {\em Nonlinear dispersive equations}, volume 106 of {\em CBMS
  Regional Conference Series in Mathematics}.
\newblock Published for the Conference Board of the Mathematical Sciences,
  Washington, DC; by the American Mathematical Society, Providence, RI, 2006.
\newblock Local and global analysis.

\bibitem{X16}
C.-L. Xiang.
\newblock Uniqueness and nondegeneracy of ground states for {C}hoquard
  equations in three dimensions.
\newblock {\em Calc. Var. Partial Differential Equations}, 55(6):Art. 134, 25,
  2016.

\end{thebibliography}
\bibliographystyle{abbrv}

\end{document}